\pgfplotsset{compat=1.17}
 \def\BIBand{and}%
\renewenvironment{proof}{{\sc Proof.}}{\hspace*{\fill}$\Box$}
\newcommand{\bY}{{\mathbf Y}}
\newcommand{\bU}{{\mathbf U}}
\newcommand{\bZ}{{\mathbf Z}}
\newcommand{\by}{{\mathbf y}}
\newcommand{\bu}{{\mathbf u}}
\newcommand{\bzero}{{\mathbf 0}}
\newcommand{\bone}{{\mathbf 1}}
\newcommand{\cA}{{\mathcal A}}
\newcommand{\cG}{{\mathcal G}}
\newcommand{\cL}{{\mathcal L}}
\newcommand{\cN}{{\mathcal N}}
\newcommand{\cO}{{\mathcal O}}
\newcommand{\cS}{{\mathcal S}}
\newcommand{\cU}{{\mathcal U}}
\newcommand{\frakv}{{\mathfrak{v}}}
\newcommand{\EE}{{\mathbb E}}
\newcommand{\II}{{\mathbb I}}
\newcommand{\PP}{{\mathbb P}}
\newcommand{\RR}{{\mathbb R}}
\def\eqdef {\buildrel \rm def \over =}    
\def\q{$\kern1.4em$}     
\DeclareMathOperator{\var}{Var}
\DeclareMathOperator{\Cov}{Cov}
\DeclareMathOperator{\Var}{Var}
\DeclareMathOperator{\tr}{tr}
\DeclareMathOperator{\cde}{cde}
\DeclareMathOperator{\cmc}{cmc}
\DeclareMathOperator{\cderqmc}{cde-rqmc}
\def\vol{{\rm vol}}
\def\Var{{\rm Var}}
\def\Cov{{\rm Cov}}
\def\MISE{{\rm MISE}}
\def\ISB{{\rm ISB}}
\def\IV{{\rm IV}}
\def\IC{{\rm IC}}
\def\tr{{\sf t}}
\def\d{{\rm d}}
\newif\ifnotes\notestrue
\def\mpierre#1{{\color{red} #1}}
\def\mpierre#1{#1}
\def\hpierre#1{}
\def\hflorian#1{}
\def\hamal#1{}
\def\perhaps#1{{\color{gray} #1}}
\def\new#1{{#1}}
\newif\iflong
\newif\ifplots\plotstrue
\begin{document}

\RUNAUTHOR{L'Ecuyer, Puchhammer, Ben Abdellah}

\RUNTITLE{MC and QMC Density Estimation via Conditioning}

\TITLE{Monte Carlo and Quasi-Monte Carlo Density Estimation via Conditioning}

\ARTICLEAUTHORS{%



 \AUTHOR{Pierre L'Ecuyer}
 \AFF{D\'{e}partement d'Informatique et de Recherche Op\'{e}rationnelle, 
 Pavillon Aisenstadt, Universit\'{e} de Montr\'{e}al, C.P. 6128,
 Succ. Centre-Ville, Montr\'{e}al, Qu\'{e}bec, Canada H3C 3J7, \EMAIL{lecuyer@iro.umontreal.ca}}
 
  \AUTHOR{Florian Puchhammer}
 \AFF{ Basque Center for Applied Mathematics, Alameda de Mazarredo 14, 48009 Bilbao, Basque Country, Spain; and 
  D\'{e}partement d'Informatique et de Recherche Op\'{e}rationnelle, 
  Universit\'{e} de Montr\'{e}al,
 \EMAIL{fpuchhammer@bcamath.org}}
 
  \AUTHOR{Amal Ben Abdellah}
 \AFF{D\'{e}partement d'Informatique et de Recherche Op\'{e}rationnelle, 
 Pavillon Aisenstadt, Universit\'{e} de Montr\'{e}al, C.P. 6128,
 Succ. Centre-Ville, Montr\'{e}al, Qu\'{e}bec, Canada H3C 3J7, \EMAIL{amal.ben.abdellah@umontreal.ca}}

}

\ABSTRACT{%
Estimating the unknown density from which a given independent sample originates is more difficult than estimating the mean, in the sense that for the best popular non-parametric density estimators, the mean integrated square error converges more slowly than at the canonical rate of $\cO(1/n)$. When the sample is generated from a simulation model and we have control over how this is done, we can do better. We examine an approach in which conditional Monte Carlo yields, under certain conditions, a random conditional density which is an unbiased estimator of the true density at any point. By averaging independent replications, we obtain a density estimator that converges at a faster rate than the usual ones. Moreover, combining this new type of estimator with randomized quasi-Monte Carlo to generate the samples typically brings a larger improvement on the error and convergence rate than for the usual estimators, because the new estimator is smoother as a function of the underlying uniform random numbers.}

\KEYWORDS{density estimation; conditional Monte Carlo; quasi-Monte Carlo}

\maketitle


%

\section{Introduction}

Simulation is commonly used to generate $n$ realizations of a random variable $X$
that may represent a payoff, a cost, or a performance of some kind, and then to estimate 
from this sample the unknown expectation of $X$ together with a confidence interval
on this expectation \citep{sASM07a,sLAW14a}. 
Simulation books focus primarily on how to improve the quality of
the estimator of $\EE[X]$ and of the confidence interval.
Estimating a given quantile of the distribution of $X$, or the sensitivity of
$\EE[X]$ with respect to some parameter in the model, also with a confidence interval,
are other well-studied topics in the literature. 

However, large simulation experiments can provide a lot more information than just point 
estimates with confidence intervals.
Running simulations of a complex system for hours, with thousands of runs, 
only to report confidence intervals on a few single numbers is poor data valorization.
A simulation experiment can give much more useful information than this.
In particular, it can provide an estimate of the entire distribution of $X$, 
and not only its expectation or a specific quantile.  
Moreover, and perhaps more importantly, users are typically more interested in the whole distribution 
than on a confidence interval on the mean. The following examples of typical 
simulation models show why.

In many real-life stochastic simulation models, the prime focus of interested 
is the distribution of certain random \emph{delays}.
These delays can be for example the waiting times of calls in a telephone call center,
the waiting times of patients at a walk-in medical clinic or at the emergency, 
the waiting time of passengers at an airport checking counter,
the delivery time of an order, the travel time in some transportation network, etc.
In all these situations, a user will be mostly interested in 
\emph{what his own waiting time is likely to be}.  
That is, she/he is much more interested in the \emph{probability distribution}
of the waiting time 
than in having a good estimate of the expectation (or global mean) \citep{sNEL08a,sSMI15a}.

When one makes a call to a call center and all agents are busy, a good forecast of the 
waiting time is certainly appreciated. Based on this forecast, the caller may decide 
that there is enough time to engage in another activity before getting an answer.
The expected waiting time alone is not sufficient to make such as decision,
because for example it does not tell the probability of missing the call when 
going out for $x$ minutes.
A distributional forecast, which provides a density of the waiting time distribution
(perhaps conditional on the current time and system state) is much more informative and helpful
\citep{ccTHI21a}. This applies to waiting times in many other types of service systems. 
Users are interested in the density (or distribution) of their waiting time, not just the expectation. 
When a manufacturer orders parts from a supplier, or a retail store orders items from 
the manufacture or distributor, an estimate of the density of the time until delivery 
(and not just its expectation) gives them good idea of what can happen,
including the probability that the parts or items arrive on time, 
and the distribution of the delay if there is one.
In a large construction project that involves many activities of random durations and precedence 
constraints, the total time to complete the project is a random variable $X$ usually modeled
by a stochastic activity network (see Section~\ref{sec:san}).
Knowing the density of $X$ permits one to assess the risks in signing contracts that 
impose various types of penalties when $X$ is too large. 
%
In many other situations, $X$ is a cost or a profit and estimating the  
density of $X$ is again more interesting and useful than just the expectation.
In a finance application, for example, $X$ may represent an investment loss 
over a given month, and the density of $X$ provides much more information on the 
possibilities of large losses (and perhaps bankruptcy) than just the mean.
We will report numerical experiments for a finance-type example in 
Section~\ref{sec:function-multinormal}.

So, simulation users are interested in the whole distribution of the output 
and not only the mean.
One way to visualize the \emph{entire distribution of $X$} is to look at the empirical 
\emph{cumulative distribution function} (cdf) of the observations. 
But density estimators (including histograms) are preferred because they give a better 
visual insight on the distribution than the cdf.
For this reason, leading simulation software routinely provides histograms and enhanced boxplots 
that give a rough idea of the distribution of the output random variables of interest.  
For example, the standard output display in Simio$^{\mbox{\footnotesize\sc tm}}$ is a histogram enhanced 
with a boxplot, named SMORE (Simio Measure Of Risk and Error) \citep{sSTU10a,sSMI15a}.
When $X$ has a continuous distribution, these histograms and boxplots are 
in fact just primitive forms of density estimators.  
So why are better density estimators not routinely offered?
Mainly because non-parametric density estimation is difficult.  


If the density of $X$ is \emph{assumed} to have a known \emph{parametric form}, e.g., a normal or
gamma distribution, then one can estimate the parameters from data in the usual way 
(e.g., by maximum likelihood) and things are simple.
But in typical complex models, $X$ does not have a known and simple form of distribution.
There are \emph{semi-parametric} procedures in which the density is assumed to belong to a Hilbert space
of functions which are linear combinations a finite number of fixed basis functions,
and the coefficients are estimated by penalized regression.
These are known as smoothing spline models \citep{tGU93a,tYU20a}.   
But it is often difficult to select basis functions that capture the unknown density
and a good choice depends on the problem.
In this paper, we focus on \emph{non-parametric} methods, in the sense that we assume 
no particular form for the density of $X$.  On the other hand, the input distributions 
in the simulation model may be parametric (they often are).  
The most widely used non-parametric density estimation methods are the \emph{histogram} 
and the \emph{kernel density estimator} (KDE) \citep{tPAR62a,tSIL86a,tWAN95a,tSCO15a}.
Given $n$ independent realizations of $X$, the mean integrated square error (MISE) between the
true density and a histogram with optimally selected divisions converges only as $\cO(n^{-2/3})$.  
With the KDE, the MISE converges as $\cO(n^{-4/5})$ in the best case.
These rates are slower than the canonical $\cO(n^{-1})$ rate for the variance 
of the sample average as an unbiased estimator of the mean.
The slower rates 
stem from the presence of bias.  
See for example \cite{tSCO15a} for the details. 
For a histogram, taking wider rectangles reduces the variance but increases
the bias by flattening out the short-range density variations. A compromise must be made to 
minimize the MISE. The same happens with the KDE, with the rectangle width replaced 
by the bandwidth of the kernel.
Selecting a good bandwidth for the KDE is particularly difficult.
The bandwidth should ideally vary over the interval in which we estimate the density;
it should be smaller where the density is larger and/or smoother, and vice-versa.
This is complicated to implement.
Handling discontinuities in the density is also problematic.
These difficulties have discouraged the use of KDEs (instead of histograms)
to report simulation results in general-purpose software.

The KDE and other related density estimation methods were developed mainly for the situation 
where $n$ independent realizations of $X$ are given and nothing else is known,
as traditionally assumed in classical non-parametric statistics,
and one wishes to estimate the density from them \citep{tSCO15a}.
But in a Monte Carlo setting in which the $n$ observations are generated by simulation,
there are opportunities to do better by controlling the way we generate the realizations and by
exploiting the fact that we know the underlying stochastic model.
This is the subject of the present paper.  

Our approach combines two general ideas.
The first one is to build a smooth estimator of the cdf via 
conditional Monte Carlo (CMC), and take the corresponding conditional density 
to estimate the unknown density.  We call it a \emph{conditional density estimator} (CDE).
Under appropriate conditions, the CDE is unbiased and has uniformly-bounded variance,
so its MISE is $\cO(n^{-1})$ for $n$ samples.
This idea of using CMC was mentioned by \cite{sASM07a}, page 146, Example 4.3, 
and further studied in \cite{vASM18a}, but only for the special case of estimating 
the density of a sum of i.i.d.{} continuous random variables having a known density.
\cite{vASM18a} simply ``hides'' the last term of the sum, meaning that the last random variable 
is not generated, and he takes a shifted version of the known density of this last variable 
to estimate the density, the value at risk, and the conditional value at risk of the sum.
His setting is equivalent to a sum of two independent random variables: 
the first one is the partial sum which is generated and on which we condition, 
and the second one is the last variable which is not generated.
\new{\cite{oFU06b} mentioned this same idea in one of his examples.}

Smoothing by CMC before taking a stochastic derivative has been studied earlier
for estimating the derivative of an expectation 
\citep{oGON87a,oLEC94a,vFU97a} and the derivative of a quantile \citep{vFU09a} with respect 
to a \emph{model parameter}.  This is known as \emph{smoothed perturbation analysis} (SPA).
In retrospect, one can say that the CDE at a given point $x$ is an SPA estimator
obtained by viewing the cdf $F(x)$ as the expectation and $x$ as the model parameter.
\new{However, nobody studied this idea for density estimation until \cite{vASM18a} 
did it for his special case.}  

The main contribution of this paper is to show how this CDE approach can be used to 
estimate the density in a much more general setting than \cite{vASM18a},
to give conditions under which it provides an unbiased density estimator,
and to examine how effective it is via experiments on several types of examples.
In most of these examples, $X$ is \emph{not} defined as a sum of random variables,
and we often have to hide more than just one random variable to do the conditioning.
A key unbiasedness condition is that the conditional cdf must be a continuous function 
of the point $x$ at which we estimate the density. 
In other words, the conditional distribution of $X$ under the selected conditioning
must have a density with respect to the Lebesgue measure.
The variance of the density estimator may depend strongly on which variables we hide,
i.e., on what we are conditioning.  We illustrate this with several examples
and we provide guidelines for the choice of conditioning.
Interestingly, while the KDE is defined as an average of $n$ randomly-shifted copies of
the (fixed) kernel density, the CDE is an average of $n$ conditional densities which are
generally different and random.  

In addition to being unbiased, the CDE often has less variation than the KDE as a function 
of the underlying uniform random numbers.  As a result, its combination with 
\emph{randomized quasi-Monte Carlo} (RQMC) tends to bring much more improvement than for the KDE.
We have observed this in all our experiments.
Under appropriate conditions, it can be proved that combining the CDE with RQMC provides
a density estimator whose MISE converges at a faster rate than $\cO(n^{-1})$,
for instance  $\cO(n^{-2+\epsilon})$ for any $\epsilon > 0$ in some situations.
We observe this fast rate empirically on numerical examples.
This happens essentially when the CDE is a smooth function of the underlying uniforms.
To our knowledge, this type of convergence rate has never been proved or observed for 
non-parametric density estimation.

\hpierre{The question of how to use RQMC for density estimation was raised in a discussion 
  between A.~B.~Owen, F.~Hickernell, and P.~L'Ecuyer
  at a workshop on High-Dimensional Numerical Problems in Banff, in September 2015.}

The combination of RQMC with an ordinary KDE was studied by \cite{vBEN21a},
who were able to prove a faster rate than $\cO(n^{-4/5})$ for the MISE when the RQMC points
have a small number of dimensions.  They observed this faster rate empirically on examples.
They also showed that the MISE reduction from RQMC degrades rapidly when the bandwidth 
is reduced (to reduce the bias) or when the dimension increases.
The CDE+RQMC approach studied in the present paper avoids this problem (there is no bias 
and no bandwidth) and is generally much more effective than the KDE+RQMC combination.
We provide numerical comparisons in our examples.

Other Monte Carlo density estimators were proposed very recently, also based on the idea of
estimating the derivative of the cdf, but using a likelihood ratio (LR) method instead. 
The LR method was originally designed to estimate the derivative of the expectation 
with respect to parameters of the distribution of the underlying input random variables 
\citep{oGLY87a,oLEC90a}.
\cite{vLAU19a} proposed an estimator that combines a clever change of variable with the LR
method, to estimate the density of a sum of random variables as in \cite{vASM18a},
but in a setting where the random variables can be dependent.
\cite{oPEN18a} proposed a generalized version of the LR gradient estimator method, named GLR,
to estimate the derivative of an expectation with respect to a more general model parameter.
\cite{vLEI18a} sketched out how GLR could be used to estimate a density.
Formulas for these GLR density estimators are given in Theorem 1 of \cite{oPEN20a}. 
We compare them with the CDE estimators in our numerical illustrations.

Density estimation has other applications than just visualizing the distribution
of an output random variable \citep{tVAN00a,tSCO15a}.
For instance when computing a confidence interval for a quantile using the 
central-limit theorem (CLT), 
one needs a density estimator at the quantile to estimate the variance 
\citep{tSER80a,sASM07a,tNAK14a,tNAK14b}. 
See Section~\ref{sec:quantile} in the Supplement.
Another application is for maximum likelihood estimation when the likelihood does not have
a closed-form expression, so to maximize it with respect to some parameter $\theta$, 
the likelihood function (which in the continuous case is a density at any value of $\theta$)
must be estimated \citep{tVAN00a,oPEN20a}.
A related application is the estimation of the posterior density of $\theta$ given some data,
in a Bayesian model \citep{tEFR16a}.

The remainder is organized as follows.
In Section~\ref{sec:model}, we define our general setting, 
recall key facts about density estimators,
introduce the general CDEs considered in this paper, 
prove some of their properties, and give small examples to provide insight on the key ideas.
We also briefly recall GLR density estimators.
%
In Section~\ref{sec:rqmc}, we explain how to combine the CDE with RQMC
and discuss the convergence properties for this combination.
Section~\ref{sec:experiments} reports experimental results with various examples.
Some of the examples feature creative ways of conditioning to improve
the effectiveness of the method.  Additional examples are examined in the Online Supplement.
\new{Section~\ref{sec:guidelines} summarizes the key issues and guidelines on 
the construction and applications of the CDE.}
A conclusion is given in Section~\ref{sec:conclusion}.
The main ideas of this paper were presented at a SAMSI workshop on QMC methods
in North Carolina, and at a RICAM workshop 
in Linz, Austria, both in 2018.

\section{Model and conditional density estimator}
\label{sec:model}

\subsection{Density estimation setting}

We have a real-valued random variable $X$ that can be simulated from its 
exact distribution, but we do not know the cdf $F$ and density $f$ of $X$.
Typically, $X$ will be an easily computable function of several other 
random variables with known densities.
Our goal is to estimate $f$ over a finite interval $[a,b]$.
Let $\hat f_n$ denote an estimator of $f$ based on a sample of size $n$. 
We measure the quality of $\hat f_n$
by the \emph{mean integrated square error} (MISE), defined as
\begin{equation}
\label{eq:mise}
  \MISE = \MISE(\hat f_n) = \int_a^b \EE [(\hat f_n(x) - f(x))^2] \d x.
\end{equation}
The MISE is the sum of the 
\emph{integrated variance} (IV) and the \emph{integrated square bias} (ISB):
\[
  \MISE = \IV + \ISB 
	      = \int_a^b \EE (\hat f_n(x) - \EE [\hat f_n(x)])^2 \d x
	      + \int_a^b (\EE [\hat f_n(x)] - f(x))^2 \d x.
\]
\hpierre{If we also want to take into account the computing cost of the estimator, we can
use the \emph{work-normalized MISE}, defined as the MISE multiplied by the expected computing cost.}%
A standard way of constructing $\hat f_n$ when $X_1,\dots,X_n$ are $n$ independent realizations 
of $X$ is via a KDE, defined as follows \citep{tPAR62a,tSCO15a}:
\[
 \hat f_n(x) = \frac{1}{nh} \sum_{i=1}^n  k\left(\frac{x-X_i}{h}\right),
   \label{eq:kde}
\]
where the \emph{kernel} $k$ is a probability density over $\RR$,
usually symmetric about 0 and non-increasing over $[0,\infty)$, 
and the constant $h > 0$ is the \emph{bandwidth}, whose role is to stretch [or compress]
the kernel horizontally to smooth out [or unsmooth] the estimator $\hat f_n$.
The KDE was developed for the setting in which $X_1,\dots,X_n$ are given a priori, 
and it is the most popular estimator for this situation.
It can be used as well when $X_1,\dots,X_n$ 
are independent observations produced by simulation from a generative model,
but then there is an opportunity to do better, as we now explain.

\subsection{Conditioning and the stochastic derivative as an unbiased density estimator}

Since the density of $X$ is the derivative of its cdf, $f(x) = F'(x)$, a natural idea
would be to take the derivative of an estimator of the cdf as a density estimator.
The simplest candidate for a cdf estimator is the \emph{empirical cdf} 
\[
  {\hat F_n(x)} = \frac{1}{n} \sum_{i=1}^n \II[X_i\le x],
\]
but $\d {\hat F_n(x)} /\d x = 0$ almost everywhere, 
so this one \emph{cannot} be a useful density estimator.
Here, $\hat F_n(x)$ is an unbiased estimator of $F(x)$ at each $x$,
but its derivative is a biased estimator of $F'(x)$.  That is,
because of the discontinuity of $\hat F_n$, we cannot exchange the derivative and expectation:
\[
  0 \;=\; \EE\left[\frac{\d {\hat F_n(x)}}{\d x}\right]  \;\not=\;  \frac{\d \EE[\hat F_n(x)]}{\d x} \;=\; F'(x). 
\]

A general framework to construct a continuous estimator of $F$ via CMC is the following.
Replace the indicator $\II[X\le x]$ by its \emph{conditional cdf} given 
filtered (reduced) information ${\cG}$:
$
  {F(x \mid \cG)} \;\eqdef\; \PP[X \leq x \mid \mathcal{G}],
$
where ${\cG}$ is a sigma-field that contains not enough information to reveal $X$ 
but enough to compute $F(x \mid \cG)$.
Here, knowing the realization of $\cG$ means knowing the realizations of all $\cG$-measurable 
random variables.
Our CDE to estimate $f(x)$ will be the \emph{conditional density} 
$f(x\mid \cG) \eqdef F'(x \mid \cG) = \d F(x \mid \cG) /\d x$, when it exists.
\new{We assume that this estimator can be computed (or approximated) for (almost) all realizations of $\cG$.}  
Under the following assumption, we prove that $f(x \mid \cG)$ exists almost surely and
is an unbiased estimator of $f(x)$ whose variance is bounded uniformly in $x$.
Since $F(\cdot\mid \cG)$ cannot decrease,  $f(\cdot\mid \cG)$ is never negative.

\begin{assumption}
\label{ass:cmc-dct}
For all realizations of $\cG$, $F(x \mid \cG)$ is a continuous function of $x$ over the interval $[a,b]$,
and is differentiable except perhaps at a countable set of points $D(\cG) \subset [a,b]$.
For all $x\in [a,b]$, $F(x \mid \cG)$ is differentiable at $x$ w.p.1.
There is also a random variable ${\Gamma}$ defined over the same probability space as $F(x \mid \cG)$,
such that $\EE[\Gamma^2] \le K_\gamma$ for some constant $K_\gamma < \infty$, and for which
 $\;\sup_{x\in [a,b]\setminus D(\cG)} F'(x \mid \cG) \le \Gamma.$
\end{assumption}

\begin{proposition}
\label{th:cmc-dct}
Under Assumption~\ref{ass:cmc-dct}, 
$\EE[f(x \mid \cG)] = f(x)$ 
and $\Var[f(x \mid \cG)] \le K_\gamma$ for all $x\in [a,b]$.
\end{proposition}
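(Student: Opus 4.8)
The plan is to write the estimator as the $x$-derivative of the conditional cdf, recall that $F(x)=\EE[F(x\mid\cG)]$ by the tower property, and interchange this derivative with the expectation over $\cG$ using dominated convergence with $\Gamma$ as the dominating function; the variance bound then follows because $f(\cdot\mid\cG)$ is squeezed between $0$ and $\Gamma$.

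First I would fix $x\in[a,b]$ and an arbitrary sequence $\delta_m\to 0$ with $x+\delta_m\in[a,b]$ (taking one-sided increments at the endpoints), and study the random difference quotients $Q_m\eqdef\bigl(F(x+\delta_m\mid\cG)-F(x\mid\cG)\bigr)/\delta_m$. By Assumption~\ref{ass:cmc-dct}, $F(\cdot\mid\cG)$ is differentiable at $x$ with probability one, so $Q_m\to f(x\mid\cG)$ a.s. The crux is to bound $|Q_m|$ by $\Gamma$ uniformly in $m$. For this I would invoke the following elementary real-analysis fact: if $g$ is continuous on a compact interval $[c,d]$ and differentiable with $g'\le M$ on $[c,d]\setminus E$ for some countable set $E$, then $g(d)-g(c)\le M(d-c)$. (This is the mean value inequality adapted to a countable exceptional set; it is proved by a short covering argument in which, after enumerating $E=\{e_1,e_2,\dots\}$, one absorbs a convergent amount of slack $\varepsilon\sum_k 2^{-k}$ at the $e_k$ via continuity, runs a connectedness argument on the set where the target inequality holds, and lets $\varepsilon\to0$.) Applying this to $g=F(\cdot\mid\cG)$ on the interval with endpoints $x$ and $x+\delta_m$, with $E=D(\cG)$ and $M=\sup_{t\in[a,b]\setminus D(\cG)}F'(t\mid\cG)\le\Gamma$, gives $F(x+\delta_m\mid\cG)-F(x\mid\cG)\le\Gamma\,\delta_m$ when $\delta_m>0$, and the analogous inequality when $\delta_m<0$; since $F(\cdot\mid\cG)$ is non-decreasing (being a conditional cdf), the increment also has the sign of $\delta_m$, so $0\le Q_m\le\Gamma$ for every $m$.

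Since $\EE[\Gamma]\le(\EE[\Gamma^2])^{1/2}\le K_\gamma^{1/2}<\infty$, the constant $\Gamma$ is an integrable dominating function for the $Q_m$, so the dominated convergence theorem yields $\EE[Q_m]\to\EE[f(x\mid\cG)]$. On the other hand, the tower property gives $\EE[F(y\mid\cG)]=\EE\bigl[\PP[X\le y\mid\cG]\bigr]=F(y)$ for all $y$, hence $\EE[Q_m]=\bigl(F(x+\delta_m)-F(x)\bigr)/\delta_m$. As the sequence $\delta_m\to0$ was arbitrary, this proves that $F$ is differentiable at $x$ with $F'(x)=\EE[f(x\mid\cG)]$; that is, $f(x)$ exists and $\EE[f(x\mid\cG)]=f(x)$. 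Finally, letting $m\to\infty$ in $0\le Q_m\le\Gamma$ shows $0\le f(x\mid\cG)\le\Gamma$ w.p.1, so $\Var[f(x\mid\cG)]\le\EE[f(x\mid\cG)^2]\le\EE[\Gamma^2]\le K_\gamma$.

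The only real obstacle is the mean value inequality with a countable exceptional set: the textbook mean value theorem requires differentiability at every interior point, so the set $D(\cG)$ must be dealt with by the covering/continuity argument sketched above (or by citing this fact). Everything else---the tower property, the monotonicity and continuity of $F(\cdot\mid\cG)$, and the application of dominated convergence---is routine; one should just remember to handle the endpoints $a,b$ via one-sided derivatives and to note that the argument simultaneously establishes the existence of $f(x)=F'(x)$, which is part of the statement.
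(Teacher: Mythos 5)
Your proof is correct and follows essentially the same route as the paper: the paper also bounds the difference quotients by $\Gamma$ via a mean value inequality valid off a countable exceptional set (citing Theorem 8.5.3 of Dieudonn\'e, which is the fact you re-derive by a covering argument), then applies dominated convergence to exchange limit and expectation, and gets the variance bound from $0\le f(x\mid\cG)\le\Gamma$ and $\EE[\Gamma^2]\le K_\gamma$. Your added care about endpoints, the explicit tower-property step, and the remark that the argument also yields existence of $F'(x)$ are fine elaborations of the same argument.
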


\begin{proof}
We adapt the proof of Theorem 1 of \cite{oLEC90a}.
By Theorem 8.5.3 of \cite{mDIE69a}, 
which is a form of mean value inequality theorem for non-differentiable functions,
for every $x\in [a,b]$ and $\delta > 0$, with probability 1, we have 
\[
  0 \;\le\; \frac{\Delta(x,\delta,\cG)}{\delta} 
	\;\eqdef\; \frac{ F(x+\delta \mid \cG) - F(x \mid \cG)}{\delta}
	\;\le\;  \sup_{y\in [x,x+\delta] \setminus D(\cG)}  F'(y \mid \cG)
	\;\le\;  \Gamma.
\]
Then, by the dominated convergence theorem,
\[
  \EE\left[\lim_{\delta\to 0} \frac{\Delta(x,\delta,\cG)}{\delta}\right] 
     = \lim_{\delta\to 0} \EE\left[\frac{\Delta(x,\delta,\cG)}{\delta}\right], 
\]
which shows the unbiasedness.
Moreover, $\Var[f(x \mid \cG)] = \Var[F'(x \mid \cG)] \le \EE[\Gamma^2] \le K_\gamma$.
\end{proof}

Suppose now that $\cG^{(1)},\dots,\cG^{(n)}$ are $n$ independent realizations of $\cG$,
so $F(x \mid \cG^{(1)}),\dots, F(x \mid \cG^{(n)})$ are independent realizations of $F(x \mid \cG)$,
and consider the CDE
\begin{equation}
\label{eq:hatf-cde}
  \hat f_{\cde,n}(x) = \frac{1}{n} \sum_{i=1}^n f(x \mid \cG^{(i)}).
\end{equation}
Under Assumption~\ref{ass:cmc-dct}, it follows from Proposition 1 that $\ISB(\hat f_{{\cde},n}) = 0$ 
and $\MISE(\hat f_{{\cde},n}) = \IV(\hat f_{{\cde},n}) \le (b-a) K_\gamma / n$.
An unbiased estimator of this IV is given by 
\begin{equation}
\label{eq:hat-iv}
  \widehat{\IV} = \widehat{\IV}(\hat f_{{\cde},n}) 
	  = {\frac{1}{n-1}} \int_a^b \sum_{i=1}^n 
		      \left[f(x \mid \cG^{(i)}) - \hat f_{\cde,n}(x) \right]^2 \d x.
\end{equation}
In practice, this integral can be approximated by evaluating the integrand at a finite
number of points over $[a,b]$ and taking the average, multiplied by $(b-a)$. 

The variance of the CDE estimator at $x$ is $\Var[f(x\mid \cG)]$,
where $x$ is fixed and $\cG$ is random.
This differs from the variance associated with the conditional density
$f(\cdot\mid \cG)$, which is $\Var[X\mid \cG]$.
It is well known that in general, when estimating $\EE[X]$, a CMC estimator never has a larger variance
than $X$ itself, and the more information we hide, the smaller the variance.
That is, if $\cG \subset \tilde\cG$ are two sigma-fields such that $\cG$ contains only a subset
of the information of $\tilde\cG$, then 
\begin{equation}
\label{eq:var-cmc}
  \Var[\EE[X\mid \cG]] \le \Var[\EE[X\mid \tilde\cG]] \le \Var[X].
\end{equation}
Noting that $F(x \mid \cG) = \EE[\II[X\le x] \mid \cG]$, we also have
\[
  \Var[F(x\mid \cG)] \le \Var[F(x\mid \tilde\cG)] \le \Var[\II[X\le x]] = F(x)(1-F(x)).
\]
Thus, (\ref{eq:var-cmc}) applies as well to the (conditional) cdf estimator.
However, applying it to the CDE is less straightforward.
It is obviously not true that $\Var[F'(x\mid \cG)] \le \Var[\d\II[X\le x]/\d x]$ 
because the latter is zero almost everywhere.
Nevertheless, we can prove the following.

\begin{lemma}
\label{th:decreasing-variance}
If $\cG \subset \tilde\cG$ both satisfy Assumption~\ref{ass:cmc-dct}, 
then for all $x\in [a,b]$,  we have $\Var[f(x \mid \cG)] \le \Var[f(x \mid \tilde\cG)]$.
\end{lemma}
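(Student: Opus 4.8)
The plan is to reduce the lemma to the single identity
\begin{equation*}
  f(x\mid\cG) \;=\; \EE\!\left[\,f(x\mid\tilde\cG)\,\Big|\,\cG\,\right]
  \qquad\text{w.p.1, for each fixed }x\in[a,b].
\end{equation*}
Granting this, set $W\eqdef f(x\mid\tilde\cG)$. By Assumption~\ref{ass:cmc-dct} applied to $\tilde\cG$ and Proposition~\ref{th:cmc-dct}, $\EE[W^2]\le K_\gamma<\infty$, so $W$ is square-integrable and the law of total variance gives
\begin{equation*}
  \Var[\,f(x\mid\tilde\cG)\,] \;=\; \Var\!\big(\EE[W\mid\cG]\big) + \EE\!\big(\Var[W\mid\cG]\big)
  \;\ge\; \Var\!\big(\EE[W\mid\cG]\big) \;=\; \Var[\,f(x\mid\cG)\,],
\end{equation*}
which is exactly the claimed inequality. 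So the whole content of the lemma is the commutation of the derivative in $x$ with the conditional expectation that projects from $\tilde\cG$ down to $\cG$.

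To prove the identity, I would start from $F(x\mid\cG)=\EE[\II[X\le x]\mid\cG]$ and $F(x\mid\tilde\cG)=\EE[\II[X\le x]\mid\tilde\cG]$; since $\cG\subset\tilde\cG$, the tower property yields $F(x\mid\cG)=\EE[F(x\mid\tilde\cG)\mid\cG]$ w.p.1. Hence, for $\delta>0$ with $x+\delta\le b$,
\begin{equation*}
  \frac{F(x+\delta\mid\cG)-F(x\mid\cG)}{\delta}
  \;=\; \EE\!\left[\frac{F(x+\delta\mid\tilde\cG)-F(x\mid\tilde\cG)}{\delta}\;\Big|\;\cG\right]
  \qquad\text{w.p.1.}
\end{equation*}
By the mean value inequality estimate used in the proof of Proposition~\ref{th:cmc-dct}, the inner difference quotient is nonnegative and bounded above w.p.1 by the dominating variable $\tilde\Gamma$ for $\tilde\cG$, which is integrable since $\EE[\tilde\Gamma^2]\le K_\gamma$. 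On the probability-one event on which $F(\cdot\mid\cG)$ and $F(\cdot\mid\tilde\cG)$ are both differentiable at $x$ (the intersection of the two events supplied by Assumption~\ref{ass:cmc-dct}), the left-hand side converges to $f(x\mid\cG)$ and the inner quotient on the right converges pointwise to $f(x\mid\tilde\cG)$ as $\delta\downarrow 0$. Applying the conditional dominated convergence theorem along a sequence $\delta_m\downarrow 0$, with envelope $\tilde\Gamma$, identifies the limit of the right-hand side as $\EE[f(x\mid\tilde\cG)\mid\cG]$, and the identity follows.

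The main obstacle is precisely this last interchange of limit and conditional expectation: one must invoke the \emph{conditional} form of dominated convergence and check that the hypothesis of Assumption~\ref{ass:cmc-dct} for the \emph{larger} sigma-field $\tilde\cG$ provides a single integrable envelope $\tilde\Gamma$ dominating all difference quotients uniformly in $\delta$, and that the w.p.1 differentiability at the fixed point $x$ holds simultaneously for $F(\cdot\mid\cG)$ and $F(\cdot\mid\tilde\cG)$. The tower property and the variance decomposition are routine, and finiteness of all variances involved is immediate from the $L^2$ bound in Proposition~\ref{th:cmc-dct}.
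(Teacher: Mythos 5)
Your proof is correct, but it takes a genuinely different route from the paper's. You first establish the projection identity $f(x\mid\cG)=\EE[f(x\mid\tilde\cG)\mid\cG]$ w.p.1 --- via the tower property at the level of the conditional cdfs, the difference quotients, and a \emph{conditional} dominated convergence step with the envelope $\tilde\Gamma$ supplied by Assumption~\ref{ass:cmc-dct} for the larger field --- and then the inequality drops out of the law of total variance (equivalently, conditional Jensen plus the $L^2$ bound from Proposition~\ref{th:cmc-dct}). The paper instead never forms $\EE[f(x\mid\tilde\cG)\mid\cG]$: it works with the indicator $I(x,\delta)=\II[x<X\le x+\delta]$, applies the standard CMC variance inequality $\Var[\EE[I\mid\cG]]\le\Var[\EE[I\mid\tilde\cG]]$ at each fixed $\delta>0$, and then exchanges $\Var$ with the limit $\delta\to 0$ on both sides, justifying the interchange by (unconditional) dominated convergence applied to the difference quotient and to its square, the latter dominated by $\Gamma^2$. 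The trade-off: your argument needs the conditional form of dominated convergence (and the simultaneous a.s.\ differentiability at $x$ for both fields, which you correctly note), but it delivers a stronger structural fact --- the conditional densities form a martingale across nested conditionings, so monotonicity of any convex functional, not just the variance, follows immediately; the paper's argument stays entirely within the machinery already set up in Proposition~\ref{th:cmc-dct} and only ever manipulates unconditional moments. Both require essentially the same integrability input, and both are complete proofs of the lemma.
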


\begin{proof}
The result does not follow directly from (\ref{eq:var-cmc}) because $F'$ is not an expectation;
this is why our proof does a little detour.
For an arbitrary $x\in [a,b]$ and a small $\delta > 0$, define the random variable 
$I = I(x,\delta) = \II[x < X \le x+\delta]$.  We have
$\EE[I\mid \cG] = F(x+\delta \mid \cG) - F(x \mid \cG)$, 
as in the proof of Proposition~\ref{th:cmc-dct}, and similarly for $\tilde \cG$.
Using (\ref{eq:var-cmc}) with $I$ in place of $X$ gives 
\begin{equation}
\label{eq:decomp-cdf}
    \Var[\EE[I\mid \cG]] \le \Var[\EE[I\mid \tilde\cG]].
\end{equation}
We have
\[
  f(x \mid \cG) =  \lim_{\delta\to 0} \frac{F(x+\delta \mid \cG) - F(x \mid \cG)}{\delta}
	    = \lim_{\delta\to 0} \EE[I(x,\delta)/\delta \mid \cG]  
\]
and similarly for $\tilde\cG$.  Combining this with (\ref{eq:decomp-cdf}), we obtain 
\begin{eqnarray*}
  \Var[f(x \mid \cG)]   
	  &=&  \Var[\lim_{\delta\to 0} \EE[I(x,\delta)/\delta \mid \cG]] 
	  ~=~  \lim_{\delta\to 0} \Var[\EE[I(x,\delta)/\delta \mid \cG]] \\
	 &\le& \lim_{\delta\to 0} \Var[\EE[I(x,\delta)/\delta \mid \tilde\cG]] 
	  ~=~  \Var[\lim_{\delta\to 0} \EE[I(x,\delta)/\delta \mid \tilde\cG]] 
    ~=~  \Var[f(x \mid \tilde\cG)], 
\end{eqnarray*}
in which the exchange of ``\Var'' with the limit 
(at two places) can be justified by a similar argument as in Proposition~\ref{th:cmc-dct}.
More specifically, we need to apply the dominated convergence theorem to $\EE[I(x,\delta)/\delta \mid \cG]$,
which is just the same as in Proposition~\ref{th:cmc-dct}, and also to its square, which is also valid
because the square is bounded uniformly by $\Gamma^2$.
This completes the proof.
\end{proof}

This lemma tells us that conditioning on less information (hiding more) always reduces
the variance of the CDE (or keep it the same).  
But if we hide more, the CDE may be harder or more costly to compute,
so a compromise must be made to minimize the work-normalized MISE
(which is the MISE multiplied by the expected time to compute the estimator),
and the best compromise is generally problem-dependent.
When none of $\cG$ or $\tilde\cG$ is a subset of the other, the variances of the 
corresponding conditional density estimators may differ significantly, and
Lemma~\ref{th:decreasing-variance} does not apply, so other \new{strategies} must be used
to select $\cG$ when there are multiple possibilities.

In our setting, the most important condition is that $\cG$ must satisfy Assumption~1.
Any such $\cG$ provides an unbiased density estimator with finite variance.
When there are multiple choices, in general we want to choose $\cG$ so that the conditional density 
tends to be \emph{spread out} as opposed to being concentrated in a narrow peak.
We give concrete examples of this in Section~\ref{sec:experiments}.
This criterion is heuristic. 
If $f$ is very spiky itself, then the CDE must be spiky as well, 
because $\Var[X\mid \cG] \le \Var[X]$, and yet $\Var[f(x\mid \cG)]$ can be very small,
even zero in degenerate cases.
Also, a large $\Var[X\mid \cG]$ for all $\cG$ is not sufficient, because the large variance
may come from two or more separate spikes, and this is why we write ``spread out'' instead of
``large variance''.
Roughly, we want the CDE $f(\cdot\mid \cG)$ to be spread out relative to $f$, 
for all realizations of $\cG$.

A more elaborate selection criterion should take into account the IV of the CDE, 
its computing cost, and also the variation of
of the resulting CDE as a function of the underlying uniform random numbers, 
in case we want to use RQMC to generate those random numbers (see Section~\ref{sec:rqmc}).
For real-life models, it is usually much too hard to precompute such measures, so the best
practice would be to identify a few promising candidates and either:
(1) perform pilot runs to compare their effectiveness and select one or
(2) take a convex combination of the corresponding CDEs, 
as explained in Section~\ref{sec:combination}.
We believe that finding a good $\cG$ will always remain largely problem-dependent
and it sometimes requires creativity.  
\new{No simple selection method works universally.
On the other hand, to make good selections, it is useful to understand certain basic principles.}
We illustrate this with a variety of examples \new{in the next subsection and} in Section~\ref{sec:experiments}.
\hpierre{Here, I tried to explain a bit more how to select $\cG$.}

\hpierre{The variance of the CDE estimator at $x$ is $\Var[f(x\mid \cG)]$.
In this expression, $x$ is fixed and $\cG$ is random, and the variance depends on how much
the conditional density at $x$ changes when $\cG$ varies.  
We emphasize that this is \emph{not} the variance associated with the conditional density
$f(\cdot\mid \cG)$, and these two variances may not be strongly linked.} 
\hpierre{here is where we need more contribution!}

\subsection{Small examples to provide insight}
\label{sec:small-examples}

To illustrate some key ideas, this subsection provides simple examples formulated in the 
special setting in which $X = h(Y_1,\dots,Y_d)$ where $Y_1,\dots,Y_d$ are 
independent continuous random variables, each $Y_j$ has cdf $F_j$ and density $f_j$, 
and we condition on $\cG = \cG_{-k}$ defined as the information that remains after erasing 
the value taken by the single input variable $Y_k$.  
We can write $\cG_{-k} = (Y_1,\dots, Y_{k-1},Y_{k+1},\dots,Y_d)$.
The CDE $f(x\mid \cG_{-k})$ will be related to the density $f_k$ and will depend on the form of $h$. 
Checking for the continuity of the conditional cdf is usually easy in this case.
Note that this setting is only a particular case of our framework.
In many applications, $X$ is not defined like this in a way that $\cG_{-k}$ would satisfy 
Assumption 1 for some $k$. In Section~\ref{sec:experiments}, we examine examples 
that do not fit this setting and we provide more elaborate forms of conditioning.

Our first example is a sum of random variables, similar to \cite{vASM18a}.
It conveys the CDE idea in a simple setting. 
It also shows that selecting 
which variable to hide is not straightforward \new{even in this very simple setting}, 
and that the optimal choice \new{may depend} on the value of $x$ at which we estimate the density.
The second example shows how the choice of $\cG$ can make a significant difference in performance,
and that it is usually better to hide variables having a larger variance contribution.
The third example illustrates what we have to do to verify Assumption~1 for a given application.
The fourth example shows that we cannot always obtain an unbiased CDE by hiding a single variable.
The fifth example shows that it is not always easy to know what is the optimal information 
to hide.  On the other hand, the CDE can still work well even if we do not use the optimal $\cG$.

\begin{example} \rm
\label{ex:sum-asm18}
A very simple situation is when $X = h(Y_1,\dots,Y_d) = Y_1 + \cdots + Y_d$,
a sum of $d$ independent continuous random variables.
By hiding $Y_k$ for an arbitrary $k$, we get
\begin{eqnarray*}
  F(x\mid \cG_{-k}) &=& \PP[X\le x \mid S_{-k}] = \PP[Y_k \le x-S_{-k}] = F_k(x-S_{-k}),
\end{eqnarray*}
where $S_{-k} \eqdef \sum_{j=1,\, j\not=k}^d Y_j$,
and the density estimator becomes $f(x\mid \cG_{-k}) = f_k(x-S_{-k})$.
This form also works when the $Y_j$'s are not independent if we are able to compute
the density of $Y_k$ conditional on $\cG_{-k}$.
It then suffices to replace $f_k$ by this conditional density.
\cite{vASM18a} studied exactly this model, with independent variables and $k=d$.

When the $Y_j$'s have different distributions and we want to hide one, 
which one should we hide?  Intuition may suggest to hide the one having
the largest variance. This simple rule works well in a majority of cases, 
although it is not always optimal.  In particular, the optimal choice of 
variable $Y_k$ may depend on the value of $x$ at which we estimate
the density.  To illustrate this, let $d=2$, $X = Y_1 + Y_2$,
$f_1(y) = 2y$, and $f_2(y) = 2(1-y)$, for $y\in (0,1)$.
Then, $f(x) > 0$ for $0 < x < 2$.
If we hide $Y_2$, the density estimator at $x$ is $f_2(x-Y_1)$ and its second moment is 
$\EE[f_2^2(x-Y_1)] = \int_0^1 f_2^2(x-y_1) f_1(y_1)\d y_1$ whereas if we hide $Y_1$,
the density estimator at $x$ is $f_1(x-Y_2)$ and its second moment is 
$\EE[f_1^2(x-Y_2)] = \int_0^1 f_1^2(x-y_2) f_2(y_2)\d y_2$. 
One can easily verify that when $x$ is close to 0, these integrands are nonzero only when 
both $y_1$ and $y_2$ are also close to 0, and then the second integral is smallest, so it is 
better to hide $Y_1$.  When $x$ is close to 2, the opposite is true and it is better to hide $Y_2$.
In applications, changing the conditioning as a function of $x$ adds complications and is 
normally not necessary.  Using the same conditioning for all $x$, even when not optimal, 
is usually preferable because of its simplicity.
\hflorian{Another way to approach this situation is explained in Section \ref{sec:combination}.}
\end{example}

\begin{example}
\label{ex:sum2unif}
The following small example provides further insight into the choice of $\cG$.
Suppose $X$ is the sum of two independent uniform random variables:
$X = Y_1 + Y_2$ where $Y_1 \sim \cU(0,1)$ and $Y_2 \sim \cU(0,\epsilon)$ 
where $0 < \epsilon < 1$.
The exact density of $X$ here is 
$f(x) = x/\epsilon$ for $0\le x\le \epsilon$,
$f(x) = 1$ for $\epsilon\le x\le 1$, and 
$f(x) = (1+\epsilon-x)/\epsilon$ for $1\le x\le 1+\epsilon$.
Figure~\ref{fig:sum2unif} illustrates this density.

\begin{figure}[htbp] 
\begin{center}
 \begin{tikzpicture} 
    \begin{axis}[ 
      xlabel=$x$,
      ylabel=$f(x)$,
      width=0.4\columnwidth,
	    height=0.2\columnwidth,
	    legend style={at={(1.3,0.3)},anchor=south east},	
			grid,
			no marks,
      ] 
    \addplot table[x=logN,y=logIV] { 
      logN  logIV 
      0		0
      0.75	1
      1		1	
      1.75	0
      }; 
    \end{axis}
  \end{tikzpicture}
\hskip 15pt
 \begin{tikzpicture} 
    \begin{axis}[ 
      xlabel=$x$,
      width=0.4\columnwidth,
	    height=0.2\columnwidth,
	    legend style={at={(1.3,0.3)},anchor=south east},	
      grid,
			no marks,
      ] 
      \addplot table[x=logN,y=logIV] { 
      logN  logIV 
      0		0
      0.0625	1.
      1		1
      1.0625	0
 }; 
%
    \end{axis}
  \end{tikzpicture}
\end{center}
\caption{Exact density of $X$ for the model in Example~\ref{ex:sum2unif}
	      with $\epsilon=3/4$ (left) and $\epsilon=1/16$ (right).}
\label{fig:sum2unif}
\end{figure}
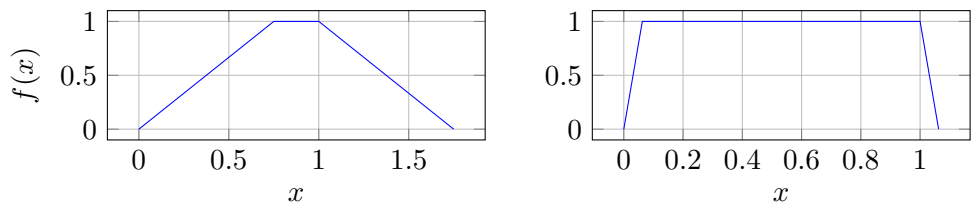

With $\cG = \cG_{-1}$, we have
$F(x\mid \cG_{-1}) = \PP[X\le x \mid Y_2] = \PP[Y_1 \le x-Y_2\mid Y_2] = x - Y_2$ and 
the density estimator is $f(x\mid \cG_{-1}) = 1$ for $Y_2 \le x \le 1 + Y_2$, and 0 elsewhere.
If $\cG = \cG_{-2}$ instead, then 
$F(x\mid \cG_{-2}) =  \PP[Y_2 \le x-Y_1\mid Y_1] = (x - Y_1)/\epsilon$ and 
the density estimator is $f(x\mid \cG_{-2}) = 1/\epsilon$ for $Y_1 \le x \le \epsilon + Y_1$, and 0 elsewhere.
In both cases, Assumption 1 holds and the density estimator with one sample is a uniform density, 
but the second one is over a narrow interval if $\epsilon$ is small.
When $\epsilon$ is small, $\cG = \cG_{-2}$ gives a density estimator $\hat f_{\cde,n}$ 
which is a sum of high narrow peaks and has much larger variance.
%
For this simple example, we can also derive exact formulas for the IV of the CDE under MC.  
For $\cG = \cG_{-1}$, $f(x\mid \cG_{-1}) = \II[Y_2 \le x \le 1 + Y_2]$ is a Bernoulli random variable 
with mean $\PP[x-1\le Y_2 \le x] = f(x)$, so its variance is $f(x)(1-f(x))$.
Integrating this over $[0,\, 1+\epsilon]$ gives $\IV = \epsilon/3$ for one sample.
For a sample of size $n$, this gives  $\IV = \epsilon/(3n)$.
For $\cG = \cG_{-2}$, $f(x\mid \cG_{-2}) = \II[Y_1 \le x \le \epsilon + Y_1]/\epsilon$ has also 
mean $f(x)$, but its variance is $\epsilon^{-1} f(x)(1-\epsilon f(x))$,
which is much larger than $f(x)(1-f(x))$ when $\epsilon$ is small.
Integrating over $[0,\, 1+\epsilon]$ gives $\IV = 1/\epsilon -1 + \epsilon/3$ for one sample,
which is also much larger than $\epsilon/3$ when $\epsilon$ is small.
The take-away: It is usually better to condition on lower-variance information
and hide variables having a large variance contribution.
\end{example}

\begin{example}
\label{ex:sum2normal}
In this example, we illustrate how Assumption~\ref{ass:cmc-dct} can be verified.
Let $X$ be the sum of two independent normal random variables, 
$X = Y_1 + Y_2$, where $Y_1 \sim \cN(0,\sigma_1^2)$, $Y_2 \sim \cN(0,\sigma_2^2)$,
and $\sigma_1^2 + \sigma_2^2 = 1$, so $X\sim \cN(0,1)$.
Let $\Phi$ and $\phi$ denote the cdf and density of the standard normal distribution.
With $\cG = \cG_{-2}$, we have $F(x\mid \cG_{-2}) =  \PP[Y_2 \le x-Y_1] = \Phi((x - Y_1)/\sigma_2)$ 
and the CDE is $f(x\mid \cG_{-2}) = \phi((x - Y_1)/\sigma_2) / \sigma_2$.  
Assumption~\ref{ass:cmc-dct} holds with $\Gamma = \phi(0)/\sigma_2$ and $K_{\gamma}=\Gamma^2$,
so this estimator is unbiased for $f(x) = \phi(x)$.   Its variance is 
\begin{align}
  \var[\phi((x-Y_1)/\sigma_2) /\sigma_2] 
  &= \EE[\exp[-(x-Y_1)^2/\sigma_2^2]/(2\pi \sigma_2^2)] - \phi^2(x) \nonumber\\
  &= \frac{1} {\sigma_2^2 \sqrt{2\pi}} \EE[\phi(\sqrt{2} ( x-Y_1)/\sigma_2)]-\phi^2(x) \nonumber\\
  &= \frac{1} {\sigma_2 \sqrt{2\pi(1+\sigma_1^2)}} 
	     \phi\left( \sqrt{2} x / \sqrt{1 + \sigma_1^2} \right) - \phi^2(x).
\label{eq:sum2normal-exact-var}
\end{align}
\end{example}

\begin{example}
\label{ex:minmax}
If $X$ is the min or max of two or more continuous random variables, then in general 
$F(\cdot \mid \cG_{-k})$ is not continuous, so if we hide only one variable,
Assumption~\ref{ass:cmc-dct} does not hold.
Indeed, if $X = \max(Y_1, Y_2)$ where $Y_1$ and $Y_2$ are independent,
with $\cG = \cG_{-2}$ (we hide $Y_2$), we have 
\[
  \PP[X\le x\mid Y_1=y] = \begin{cases} 
	         \PP[Y_2\le x\mid Y_1=y] = F_2(x) & \mbox{ if } x\geq y;\\ 
					                               0  & \mbox{ if } x < y.\\ 
	       \end{cases}
\]
If $F_2(y) > 0$, this function is discontinuous at $x=y$.
The same holds for the maximum of more than two variables.
One way to handle this is to generate all the variables, 
then hide the maximum and compute 
its conditional density given the other ones.
Without loss of generality, suppose $Y_1$ is the maximum and $Y_2 = y_2$ the second largest.
Then the CDE of the max is $f(x\mid \cG) = f_1(x\mid Y_1 > y_2)$.
Note that for independent random variables whose cdf's and densities have an analytical form,
the cdf and density of the max can often be computed analytically.
See Section~\ref{sec:san} for more on this.
A very similar story holds if we replace the max by the min.
\hflorian{The idea of using conditioning for gradient estimation has been used for this
example already in \cite[p. 731]{oFU08a}. In that paper, however, the authors take the 
derivative w.r.t. a distribution parameter of $Y_1$ and not w.r.t. $x$. 
Consequently, the interchange of the derivative with the expectation was justified in 
their case but also, the potential for density estimation remained unnoticed.}
\hpierre{Oh, I thought they were taking the derivative w.r.t. $x$ when you pointed out this
 example to me. If they only consider the derivative w.r.t. a parameter $\theta$ of $X_1$,
 then this was done before them.  For example in L'Ecuyer and Perron (1994).  
 And this is not really what we do here.   
 The derivative w.r.t. $x$ is quite a different thing!  
 In this case, I think we do not have to cite Mike Fu. }
\hflorian{I hid the passage in question now. Well, then we probably need to find a place to 
 reference vFU09a, i.e., Fu, Hong, and Hu's paper on quantile sensitivities.}
\hpierre{Of course, there is no need to estimate $f$ for this simple example, because it
can be computed exactly, which amount to hiding \emph{both} variables.  
For $X = \max(Y_1, Y_2)$, we have $F(x) = \PP[\max(Y_1, Y_2)\le x] = F_1(x) F_2(x)$ and then 
$f(x) = F_1(x) f_2(x) + F_2(x) f_1(x)$.} 
\end{example}

\begin{example}
Suppose $X = Z\cdot C$ where $Z\sim N(0,1)$ and $C$ is continuous with support over $(0,\infty)$.
We can hide $Z$ and generate $X\sim N(0,C^2)$ conditional on $C$, or do the opposite. 
Which one is best depends on the distribution of $C$.
Here we have $\Var[X] = \EE[\Var[X\mid C]] = \EE[C^2]$ while $\Var[\EE[X\mid C]] = 0$.
So the usual variance decomposition tells us nothing about what to hide.
This illustrates the fact that there is rarely a simple rule to \new{find} the optimal $\cG$.
\end{example}

\subsection{Convex combination of conditional density estimators}
\label{sec:combination}

When there are many possible choices of $\cG$ for a given problem, 
one can select more than one and take a convex linear combination of the corresponding CDEs
as the final density estimator.  
This idea is well known for general mean estimators \citep{sBRA87a}.
More specifically, suppose $\hat f_{0,n}, \dots, \hat f_{q,n}$ are $q+1$ distinct 
unbiased density estimators.  Typically, these estimators are dependent
and based on the same simulations. 
They could be all CDEs based on different choices of $\cG$ 
(so they will not hide the same information), 
but there could be non-CDEs as well.  
A convex combination can take the form
\begin{equation}
  \hat f_n(x) = \beta_0\hat f_{0,n}(x) + \cdots + \beta_q \hat f_{q,n}(x)   
	            = \hat f_{0,n}(x) - \sum_{\ell=1}^q \beta_{\ell} (\hat f_{0,n}(x) - \hat f_{\ell,n}(x))
\label{eq:convex-comb1}
\end{equation}
for all $x\in\RR$, where 
$\beta_0 + \cdots + \beta_q = 1$.
This is equivalent to choosing $\hat f_{0,n}(x)$ as the main estimator, 
and taking the $q$ differences $\hat f_{0,n}(x) - \hat f_{\ell,n}(x)$ as control variables
\citep{sBRA87a}, {Problem 2.3.9.}
With this interpretation, the optimal coefficients $\beta_{\ell}$ can be estimated via 
standard control variate theory \citep{sASM07a} by trying to minimize the IV of 
$\hat f_n(x)$ w.r.t. the $\beta_\ell$'s.
More precisely, if we denote $\IV_{\ell} = \IV(\hat f_{\ell,n}(x))$ and 
$ \IC_{\ell,k} = \int_{a}^{b}\Cov[\hat f_{\ell,n}(x),\hat f_{k,n}(x)]\d x$,
we obtain 
 \begin{equation*}
 \IV = \IV\left(\hat{f}_{n}(x)\right) 
     = \sum_{\ell=0}^{q}\beta_\ell^2 \IV_\ell + 2 \sum_{0\leq \ell < k\leq q} \beta_\ell\beta_k \IC_{\ell,k}.
 \end{equation*}
Given the $\IV_\ell$'s and $\IC_{\ell,k}$'s (or good estimates of them), 
this IV is a quadratic function of the $\beta_\ell$'s,
which can be minimized exactly as in standard least-squares linear regression.
That is, the optimal coefficients $\beta_j$ obey the standard linear regression formula.
Estimating the density and coefficients from the same data yields biased but 
consistent density estimators, and the bias is rarely a problem.
We followed this approach for some of the examples in Section~\ref{sec:experiments}.
\cite{oCUI20a} obtained an equivalent formula from a slightly different 
but equivalent reasoning.

\hpierre{In fact, the optimal coefficients should depend on $x$, so we might take $\beta_\ell(x)$
 that depend on $x$.  We can estimate the optimal coeficients at a few values of $x$, and then fit a 
 spline model, for example.  I did that earlier in \cite{vLEC08b}; see that paper for the details.
 I think we can write a section (before the examples) that examines this issue for the general case,
 and explains how to select coefficients $\beta_\ell(x)$ that depend on $x$.  I think that this can improve 
 the value of the paper.}
\hamal{In all our experiments we consider only the coefficients that don't depend on x.}
Given that the best choice of $\cG$ generally depends on $x$, one may also adopt 
a more refined approach which allows the coefficients $\beta_j$ to depend on $x$:
\begin{equation}
  \hat f_n(x) = \beta_0(x) \hat f_{0,n}(x) + \cdots + \beta_q(x) \hat f_{q,n}(x)   
	            = \hat f_{0,n}(x) - \sum_{\ell=1}^q \beta_{\ell}(x) (\hat f_{0,n}(x) - \hat f_{\ell,n}(x)),
\label{eq:convex-comb2}
\end{equation}
where $\beta_0(x) + \cdots + \beta_q(x) = 1$ for all $x\in\RR$.
The optimal coefficients can be estimated by standard control variate theory at selected values of $x$,
then for each $\ell\ge 1$, one can fit a smoothing spline to these estimated values, by least squares.
This provides estimated optimal coefficients that are smooth functions of $x$,
which can be used to obtain a final CDE.  
This type of strategy was used in \cite{vLEC08b} to estimate varying control variate coefficients.
The additional flexibility can improve the variance reduction in some situations.
\hpierre{Amal, Florian:  Please add short but clear explanations on how you have implemented these two 
 approaches.  In the first case, you may have used the integrals (w.r.t. $x$) of the covariances. 
 In the second case, you have to do something to approximate the optimal coefficients $\beta_\ell(x)$
 as functions of $x$, for $\ell \ge 1$.}

\hpierre{Around here, we may need to add a subsection that provides guidelines on how 
 to choose the conditioning. }

\subsection{A GLR density estimator (GLRDE)}
\label{sec:other-approaches}
\label{sec:GLRDE-estimator}

\hpierre{Here we briefly recall two other Monte Carlo-based density estimation approaches,
KDE+RQMC and the GLR, 
with which we will compare our method in the numerical examples.
\cite{vBEN21a} study the combination of a KDE with RQMC sampling.
They observe that RQMC reduces the IV under certain conditions, but that it does not affect
the bias, and as a result the gain is limited, especially when the dimension is moderate or large.}
 
The \emph{generalized likelihood ratio} (GLR) method, originally developed by \cite{oPEN18a}
to estimate the derivative of an expectation with respect to some model parameter,
can be adapted to density estimation, as shown in \cite{oPEN20a}.
We summarize briefly here how this method estimates the density $f(x)$ in our general setting,
so we can apply it in our examples and make numerical comparisons.
The assumptions stated below differ slightly from those in \cite{oPEN20a}.
In particular, here we do not have a parameter $\theta$,
the conditions on the estimator are required only in the area where $X \le x$,
and we add a condition to ensure finite variance.
As in Section~\ref{sec:small-examples}, we assume here that
$X = h(\bY) = h(Y_1,\dots,Y_d)$ where $Y_1,\dots,Y_d$ are independent continuous random variables, 
and $Y_j$ has cdf $F_j$ and density $f_j$.  
Let $P(x) = \{\by \in\RR^d : h(\by) \le x\}$.
For $j=1,\dots,d$, let $h_j(\by) := \partial h(\by) /\partial y_j$,
$h_{jj}(\by) := \partial^2 h(\by) /\partial y_j^2$, and 
\begin{equation}
 \Psi_j(\by) = \frac{\partial \log f_j(y_j) /\partial y_j - h_{jj}(\by) / h_j(\by)}{h_j(\by)}.
\end{equation}

\begin{assumption}
\label{ass:GLR1}
The Lebesgue measure of $h^{-1}((x-\epsilon, x+\epsilon))$ in $\RR^d$ goes to 0 
when $\epsilon\to 0$ (this means essentially that the density is bounded around $x$).
\end{assumption}

\begin{assumption}
\label{ass:GLR2}
The set $P(x)$ is measurable, the functions $h_j$, $h_{jj}$, and $\Psi_j$ 
are well defined  over it, 
and $\EE[\II[X \le x] \cdot\Psi_j^2(\bY)] < \infty$.
\hpierre{This is an adaptation and modification of the assumptions from \cite{oPEN20a}, 
 page 12, to our setting, specialized to a neighborhood of $x$.
 The last condition is to ensure finite variance.  Note that we do not care what happens
 when $X = h(\bY) > x$.  On the other hand, maybe we need some additional technical 
 assumptions made in \cite{oPEN20a}.  Perhaps we should sketch the proof of the proposition 
 under our assumptions.  If so, this would be like another new result in our paper,
 which could be cited!}
\end{assumption}

\begin{proposition}
\label{prop:GLR}
Under Assumptions~\ref{ass:GLR1} and \ref{ass:GLR2}, the GLRDE
$\II[X \le x] \cdot\Psi_j(\bY)$ is an unbiased and finite-variance estimator 
of the density $f(x)$ at $x$.
\end{proposition}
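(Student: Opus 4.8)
The plan is to carry out the integration-by-parts argument that underlies the GLR method, but with differentiation taken with respect to the argument $x$ of the cdf rather than with respect to a model parameter, while keeping careful track of where each hypothesis is used. Write $F(x) = \EE[\II[X\le x]] = \int_{P(x)} \prod_{i=1}^d f_i(y_i)\,\d\by$, fix the index $j$, and let $\by_{-j}$ denote the remaining coordinates. First I would integrate out the $y_j$ coordinate:
\[
  F(x) = \int_{\RR^{d-1}} \Big(\int_{A_j(x,\by_{-j})} f_j(y_j)\,\d y_j\Big) \prod_{i\ne j} f_i(y_i)\,\d\by_{-j},
  \qquad A_j(x,\by_{-j}) = \{y_j\in\RR : h(y_j,\by_{-j})\le x\}.
\]
Since $h_j=\partial h/\partial y_j$ is well defined and does not vanish, it has constant sign on each slice; taking $h_j>0$ without loss of generality (the opposite sign being handled by replacing $Y_j$ with $-Y_j$), the set $A_j(x,\by_{-j})$ is a half-line $(-\infty,\eta_j(x,\by_{-j})]$ with $h(\eta_j,\by_{-j})=x$, and the implicit function theorem gives $\partial\eta_j/\partial x = 1/h_j(\eta_j,\by_{-j})$.

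Next I would differentiate in $x$ under the integral sign; this is the step that uses Assumption~\ref{ass:GLR1}, which guarantees that $h^{-1}((x-\epsilon,x+\epsilon))$ has Lebesgue measure $o(1)$ and hence controls the increment $F(x+\epsilon)-F(x)$ and keeps $\eta_j$ finite, so that dominated convergence yields
\[
  f(x) = F'(x) = \int_{\RR^{d-1}} \frac{f_j(\eta_j(x,\by_{-j}))}{h_j(\eta_j(x,\by_{-j}),\by_{-j})}\prod_{i\ne j} f_i(y_i)\,\d\by_{-j}.
\]
Then comes the key algebraic identity: a direct quotient-rule computation gives $\frac{\partial}{\partial y_j}\big[f_j(y_j)/h_j(\by)\big] = f_j(y_j)\,\Psi_j(\by)$, so that, provided $f_j(y_j)/h_j(y_j,\by_{-j})\to 0$ as $y_j\to-\infty$ for a.e.\ $\by_{-j}$, the fundamental theorem of calculus gives $f_j(\eta_j)/h_j(\eta_j,\by_{-j}) = \int_{A_j(x,\by_{-j})} f_j(y_j)\,\Psi_j(\by)\,\d y_j$. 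Substituting this back and using Tonelli's theorem to recombine the iterated integral into an integral over $P(x)$ yields $f(x) = \int_{P(x)} \Psi_j(\by)\prod_i f_i(y_i)\,\d\by = \EE[\II[X\le x]\,\Psi_j(\bY)]$, which is the claimed unbiasedness. The finite-variance claim is then immediate: $\Var[\II[X\le x]\,\Psi_j(\bY)] \le \EE[\II[X\le x]\,\Psi_j^2(\bY)] < \infty$ by Assumption~\ref{ass:GLR2}.

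The main obstacle is the rigorous justification of interchanging $\d/\d x$ with the integral over the moving domain $P(x)$; everything else is a change of variables, a one-line derivative identity, or Fubini/Tonelli bookkeeping. Concretely, I would bound the difference quotient by writing it, after the $\eta_j$-representation, as an average of $f_j$ over the shrinking slab between $\eta_j(x,\by_{-j})$ and $\eta_j(x+\epsilon,\by_{-j})$, show pointwise convergence to $f_j(\eta_j)/h_j(\eta_j,\by_{-j})$, and dominate uniformly in $\epsilon$ using the $L^2$ bound of Assumption~\ref{ass:GLR2} together with Cauchy--Schwarz (the relevant sets have probability at most $1$). A secondary point that must be made explicit is the vanishing of the boundary term as $y_j\to-\infty$: it does not follow from Assumptions~\ref{ass:GLR1}--\ref{ass:GLR2} alone and should be added as a tail/regularity hypothesis (it holds automatically, for instance, when each $Y_j$ has bounded support, or when $f_j/|h_j|$ decays at infinity), mirroring a corresponding condition in \cite{oPEN20a}. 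I would also dispose of the harmless edge cases in which $A_j(x,\by_{-j})$ is empty or all of $\RR$, which contribute $0$ or are excluded by the measurability requirement on $P(x)$ in Assumption~\ref{ass:GLR2}.
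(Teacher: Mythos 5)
There is no proof in the paper to compare against: the paper states Proposition~2 and immediately defers, ``For the proof of Proposition~2 and additional details, see Peng et al.\ (2020)'' \cite{oPEN20a}. Your proposal therefore supplies what the paper omits, and it does so along essentially the same lines as the cited source: the standard GLR integration-by-parts (``push-out'') derivation, specialized so that the differentiation parameter is the threshold $x$ of the cdf rather than a distribution parameter. The core of your argument is sound: the identity $\frac{\partial}{\partial y_j}\bigl[f_j(y_j)/h_j(\by)\bigr] = f_j(y_j)\,\Psi_j(\by)$ is exactly right given the paper's definition of $\Psi_j$, the slice-by-slice fundamental-theorem-of-calculus step then converts the derivative of $F$ into $\EE[\II[X\le x]\,\Psi_j(\bY)]$, and the variance claim is immediate from $\Var[\II[X\le x]\Psi_j(\bY)] \le \EE[\II[X\le x]\,\Psi_j^2(\bY)] < \infty$, which is precisely what Assumption~3 was written to deliver.

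Your two flagged caveats are genuine and worth keeping explicit, since they are consistent with the paper's own remark that its assumptions ``differ slightly'' from those of \cite{oPEN20a}: (i) the vanishing of the boundary term $f_j(y_j)/h_j(\by)\to 0$ in the hidden tail, together with enough regularity of $h$ (continuity of $h_j$, $h_{jj}$ and a non-vanishing $h_j$ of constant sign along each slice so that $A_j(x,\by_{-j})$ is a half-line), does not follow from Assumptions~2--3 as literally stated and must be imported from the fuller hypotheses of \cite{oPEN20a}; (ii) the interchange of $\d/\d x$ with the integral is the only delicate analytic step, and your Cauchy--Schwarz domination is somewhat circular as sketched because the $L^2$ bound is on $\Psi_j$, not on the slab averages of $f_j$ --- a cleaner route is to first establish the FTC representation of the increment, i.e.\ $F(x)-F(a)=\int_a^x \EE[\II[X\le t]\,\Psi_j(\bY)]\,\d t$ by Fubini, and then differentiate using Assumption~2 and the continuity this integral representation provides. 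With those adjustments your argument is a correct, self-contained proof of the proposition under mildly strengthened hypotheses.
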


For the proof of Proposition~\ref{prop:GLR} and additional details, see \cite{oPEN20a}.

\section{Combining RQMC with the CMC density estimator}
\label{sec:rqmc}

We now discuss how RQMC can be used with the CDE,  
and under what conditions it can provide a convergence rate faster than $\cO(n^{-1})$
for the IV of the resulting unbiased estimator.
For this, we first recall some basic facts about QMC and RQMC.
More detailed coverages can be found in \cite{rNIE92b}, \cite{rDIC10a}, and
\cite{vLEC09f,vLEC18a}, for example.

For a function $g : [0,1)^s \to\RR$, the integration error by the average over a point set 
$P_n = \{\bu_1,\dots,\bu_{n}\}\subset [0,1]^s$ is defined by
\begin{equation}
\label{eq:En}
  E_n = \frac{1}{n} \sum_{i=1}^{n} g(\bu_i) - \int_{[0,1]^s} g(\bu)\d\bu.
\end{equation}
Classical QMC theory bounds this error as follows.
Let $\frakv\subseteq \cS := \{1,\dots,s\}$ denote an arbitrary subset of coordinates.
For any point $\bu = (u_1,\dots,u_s) \in[0,1]^s$, $\bu_{\frakv}$ denotes the projection of $\bu$ 
on the coordinates in $\frakv$ and $(\bu_\frakv,\bone)$ is the point $\bu$ in which $u_j$ is
replaced by 1 for each $j\not\in \frakv$. 
Let $g_{\frakv} := \partial^{|\frakv|} g/\partial\bu_{\frakv}$ denote the 
partial derivative of $g$ with respect to all the coordinates in $\frakv$. 
When $g_{\frakv}$ exists and is continuous for $\frakv = \cS$ (i.e., for all $\frakv \subseteq \cS$),
the \emph{Hardy-Krause (HK) variation} of $g$ can be written as
\begin{equation}
\label{eq:HK}
  V_{\rm HK}(g) = \sum_{\emptyset\not=\frakv\subseteq\cS} \int_{[0,1]^{|\frakv|}} 
	   \left|g_{\frakv}(\bu_{\frakv},\bone)\right| \d\bu_{\frakv}.
\end{equation}
On the other hand, the \emph{star-discrepancy} of $P_n$ is 
\[
  D^*(P_n) = \sup_{\bu\in[0,1]^s} 
    \left|\frac{|P_n\cap [\bzero,\bu)|}{n} - \vol[\bzero,\bu)\right|
\]
where $\vol[\bzero,\bu)$ is the volume of the box $[\bzero,\bu)$.
The classical \emph{Koksma-Hlawka (KH) inequality} bounds the absolute error by the product of 
these two quantities, one that involves only the function $g$ and the other that involves
only the point set $P_n$:
\begin{equation}
  |E_n| \le V_{\rm HK}(g) \cdot D^*(P_n).              \label{eq:kokla}
\end{equation}

There are explicit construction methods (e.g., digital nets, lattice rules, and polynomial lattice rules)
of deterministic point sets $P_n$ for which 
$D^*(P_n) = \cO((\log n)^{s-1} /n) = \cO(n^{-1 +\epsilon})$ for all $\epsilon > 0$.
This means that functions $g$ for which $V_{\rm HK}(g) < \infty$ can be integrated by QMC
with a worst-case error that satisfies $|E_n| = \cO(n^{-1+\epsilon})$.
There are also known methods to randomize these point sets $P_n$ in a way that 
each randomized point $\bu_i$ has the uniform distribution over $[0,1)^s$,
so $\EE[E_n] = 0$, and the $\cO(n^{-1 +\epsilon})$ discrepancy bound is preserved, which gives 
\begin{equation}
  \Var[E_n] = \EE[E_n^2] = \cO(n^{-2+\epsilon}).       \label{eq:var-bound}
\end{equation}

The classical definitions of variation and discrepancy given above are
only one pair among an infinite collection of possibilities.  
There are other versions of (\ref{eq:kokla}), with different definitions of the discrepancy and
the variation, such that there are known point set constructions for which the discrepancy converges as 
$\cO(n^{-\alpha+\epsilon})$ for $\alpha > 1$, but the conditions on $g$ to have finite variation
are more restrictive (more smoothness is required) \citep{rDIC10a}.

From a practical viewpoint, getting a good estimate or an upper bound on the variation of $g$
that can be useful to bound the RQMC variance is a notoriously difficult problem.
Even just showing that the variation is finite is not always easy.
However, finite variation is not a necessary condition.
In many realistic applications in which variation is known to be infinite, 
RQMC can nevertheless reduce the variance by a large factor \citep{vLEC09f,vLEC12a,vHE15b}.
The appropriate explanation for this depends on the application.
In many cases, part of the explanation is that the integrand $g$ can be written as a sum
of orthogonal functions (as in an ANOVA decomposition) and a set of terms in that sum 
have a large variance contribution and are smooth low-dimensional functions for which 
RQMC is very effective \citep{vLEC00b,vLEC09f,sLEM09a}.
Making such a decomposition and finding the important terms is difficult for 
realistic problems, but to apply RQMC in practice, this is not needed.
The usual approach in applications is to try it and compare the RQMC variance with the 
MC variance empirically.  
We will do that in Section~\ref{sec:experiments}.
\hpierre{To estimate the RQMC variance, we usually replicate the RQMC scheme $n_r$ times independently, using the same point set but with $n_r$ independent randomizations,
then we compute the empirical mean and variance of the $n_r$ independent realizations of 
the RQMC density estimator  $({1}/{n}) \sum_{i=1}^{n} g(\bU_i)$.}

To combine the CDE with RQMC, we must be able to write $F(x \mid \cG) = \tilde g(x,\bu)$ and 
$f(x \mid \cG) = \tilde g'(x,\bu) = \d \tilde g(x,\bu)/\d x$
for some function $\tilde g : [a,b]\times [0,1)^s$.
The function $\tilde g'(x,\cdot)$ will act as $g$ in (\ref{eq:En}).
The combined CDE+RQMC estimator $\hat f_{\cderqmc,n}(x)$ will be defined by
\begin{equation}
\label{eq:hatf-cde-rqmc}
  \hat f_{\cderqmc,n}(x) = \frac{1}{n} \sum_{i=1}^n \tilde g'(x,\bU_i),
\end{equation}
which is the RQMC version of (\ref{eq:hatf-cde}).
To estimate the RQMC variance, we can perform $n_r$ independent randomizations to obtain 
$n_r$ independent realizations of $\hat f_{\cderqmc,n}$ in (\ref{eq:hatf-cde-rqmc}) with RQMC,
and compute the empirical IV.  
By putting together the previous results, we obtain:

\begin{proposition}
\label{prop:hk-cde}
If $\sup_{x\in [a,b]} V_{\rm HK}(\tilde g'(x,\cdot)) < \infty$,
then with RQMC points sets $P_n$ with $D^*(P_n) = \cO((\log n)^{s-1} /n)$, 
for any $\epsilon > 0$, we have 
$\sup_{x\in [a,b]} \Var[\hat f_{\cderqmc,n}(x)] = \cO(n^{-2 +\epsilon})$, 
so the MISE of the CDE+RQMC estimator converges as $\cO(n^{-2 +\epsilon})$.
\end{proposition}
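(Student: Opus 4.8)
The plan is to apply the Koksma--Hlawka inequality~(\ref{eq:kokla}) pointwise in $x$, take expectations over the randomization, and then integrate over $[a,b]$, using that the CDE is unbiased and that RQMC preserves both unbiasedness and the star-discrepancy order. Fix $x\in[a,b]$ and set $g := \tilde g'(x,\cdot)$, so that $\hat f_{\cderqmc,n}(x) = (1/n)\sum_{i=1}^n g(\bU_i)$. Since each randomized point $\bU_i$ is uniformly distributed over $[0,1)^s$, the random variable $\tilde g'(x,\bU_i)$ is a representation of $f(x\mid\cG)$ driven by the uniforms, so $\int_{[0,1]^s} g(\bu)\,\d\bu = \EE[\tilde g'(x,\bU_i)] = \EE[f(x\mid\cG)] = f(x)$ by the unbiasedness in Proposition~\ref{th:cmc-dct}. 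Hence the integration error $E_n$ of~(\ref{eq:En}) equals exactly $\hat f_{\cderqmc,n}(x) - f(x)$; in particular $\EE[\hat f_{\cderqmc,n}(x)] = f(x)$, so $\ISB(\hat f_{\cderqmc,n}) = 0$ and $\Var[\hat f_{\cderqmc,n}(x)] = \EE[E_n^2]$.

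Next I would invoke~(\ref{eq:kokla}) for each realization of the randomized point set $P_n$: since $V_{\rm HK}(\tilde g'(x,\cdot)) < \infty$ by hypothesis, $|E_n| \le V_{\rm HK}(\tilde g'(x,\cdot))\,D^*(P_n)$ holds with probability one. Squaring, taking expectation over the randomization, and using that the RQMC construction preserves the discrepancy bound $D^*(P_n) \le c\,(\log n)^{s-1}/n$ almost surely for some constant $c = c(s)$, gives
\[
 \Var[\hat f_{\cderqmc,n}(x)] = \EE[E_n^2] \;\le\; V_{\rm HK}(\tilde g'(x,\cdot))^2\, c^2\,\frac{(\log n)^{2(s-1)}}{n^2}.
\]
Writing $V^* := \sup_{x\in[a,b]} V_{\rm HK}(\tilde g'(x,\cdot)) < \infty$ and taking the supremum over $x$ yields $\sup_{x\in[a,b]}\Var[\hat f_{\cderqmc,n}(x)] \le (V^*)^2 c^2 (\log n)^{2(s-1)}/n^2$. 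Because $(\log n)^{2(s-1)} = \cO(n^{\epsilon})$ for every $\epsilon>0$, this is $\cO(n^{-2+\epsilon})$. Finally, since $\ISB = 0$, the MISE coincides with the IV, and $\IV(\hat f_{\cderqmc,n}) = \int_a^b \Var[\hat f_{\cderqmc,n}(x)]\,\d x \le (b-a)\,\sup_{x\in[a,b]}\Var[\hat f_{\cderqmc,n}(x)] = \cO(n^{-2+\epsilon})$, which is the claim.

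The proof is essentially a routine combination of~(\ref{eq:kokla}), the unbiasedness of RQMC, and the absorption of the logarithmic factor into $n^{\epsilon}$, so there is no deep obstacle; the two points I would take care to state explicitly are: (i) the star-discrepancy bound must be applied to the \emph{randomized} set $P_n$, which is where one uses that standard randomizations (a random digital shift of a digital net, or a nested uniform scramble) leave the net/lattice structure and hence the discrepancy order intact, so the bound holds deterministically rather than merely in expectation; and (ii) the identity $\int_{[0,1]^s}\tilde g'(x,\bu)\,\d\bu = f(x)$, which is exactly Proposition~\ref{th:cmc-dct} once $\tilde g'(x,\bU)$ is recognized as a representation of $f(x\mid\cG)$, and which also gives $\ISB = 0$ for the RQMC estimator. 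Everything else is elementary.
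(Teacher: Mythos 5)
Your proposal is correct and follows essentially the same route as the paper, which obtains the proposition directly by combining the Koksma--Hlawka bound (\ref{eq:kokla}), the fact that the randomization preserves both the uniformity of each point (hence unbiasedness, via Proposition~\ref{th:cmc-dct}) and the $\cO((\log n)^{s-1}/n)$ discrepancy bound, yielding (\ref{eq:var-bound}) uniformly in $x$ and then integrating over $[a,b]$. Your write-up merely makes these steps explicit, so there is nothing to add.
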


Although this is rarely done in practice, it is instructive to see how the HK variation 
of $\tilde g'(x,\cdot)$ can be bounded in our CDE setting,   
so that Proposition~\ref{prop:hk-cde} applies.
For this, we need to show that the integral of the partial derivative 
of $\tilde g'(x,\bu)$ with respect to each subset of coordinates of $\bu$ is finite.  
In \iflong Example~\ref{ex:sum-rqmc} below, 
\else Section~\ref{sec:sum-rqmc} of the Supplement, \fi  
we do it for Examples~\ref{ex:sum-asm18} to \ref{ex:sum2normal}.
When the variation is unbounded, RQMC may still reduce the IV, but there is no guarantee.
The GLRDE in Proposition~\ref{prop:GLR} is typically discontinuous
because of the indicator function, and therefore its HK variation is usually infinite.

\section{Examples and numerical experiments}
\label{sec:experiments}

We now examine larger instructive examples for which we show how to construct a CDE,
summarize the results of numerical experiments with the CDE and CDE+RQMC,
and make comparisons with the GLRDE and KDE, with MC and RQMC.
Section~\ref{sec:setting} gives the experimental framework used for all the 
numerical experiments.
In Section~\ref{sec:cantilever}, we use a three-dimensional real-life example to provide
further insight on the choice of conditioning and make comparisons between methods.
In Section~\ref{sec:san}, we estimate the density of the length $X$ of the longest path 
between the source and destination in a stochastic network.  
This length may represent the total time to execute a project,
the arrival time of a train at a given station, etc.  
The length of the shortest path can be handled in a similar way.  
In Section~\ref{sec:single-queue}, $X$ is the waiting time of a customer in a queuing system.
We consider a single queue in the example, but a similar conditioning would apply for 
larger queueing systems as well.
In Section~\ref{sec:function-multinormal}, $X$ is the payoff of a financial option.
We show that by using a clever conditioning with CDE+RQMC, the MISE can be reduced by huge factors.  
More examples are given in the Online Supplement.
In all these examples, estimating the density of $X$ has high practical relevance.
Larger problem instances can also be handled with the same methods.

\subsection{Experimental setting}
\label{sec:setting}

Since the CDE is unbiased, we measure its performance by the IV, 
which equals the MISE in this case.   
To approximate the IV estimator (\ref{eq:hat-iv}) for a given $n$, 
we first take a stratified sample $e_{1},\dots, e_{n_e}$ of $n_e$ \emph{evaluation points} 
at which the empirical variance will be computed.
We sample $e_j$ uniformly in $[a + (j-1)(b-a)/n_e,\, a + j(b-a)/n_e)$ for $j=1,\dots,n_e$.
Then we use the unbiased IV estimator
\[
  \widehat\IV = \frac{(b-a)}{n_e} \sum_{j=1}^{n_e} \widehat{\Var}[\hat f_n(e_j)],
\]
where $\widehat{\Var}[\hat f_n(e_j)]$ is the empirical variance of the CDE at $e_j$,
obtained as follows.
We repeat the following $n_r$ times, independently:
Generate $n$ observations of $X$ from the density $f$ with the given method (MC or RQMC), 
and compute the CDE at each evaluation point $e_j$. 
We then compute $\widehat{\Var}[\hat f_n(e_j)]$ as the empirical variance of the $n_r$ 
density estimates at $e_j$, for each~$j$.
In all our examples, we used $n_r=100$ and $n_e=128$.

To estimate the convergence rate of the IV as a function of $n$ with the different methods,
we fit a model of the form $\IV \approx K n^{-\nu}$.  
For the CDE with independent points (no RQMC), this model holds exactly with $\nu=1$.
We hope to observe $\nu > 1$ with RQMC.
The parameters $K$ and $\nu$ are estimated by linear regression in log-log scale, i.e.,
by fitting the model $\log \IV \approx \log K - \nu\log n$ to data.
Since $n$ is always taken as a power of 2, we report the logarithms in base 2.
We estimated the IV for $n = 2^{14},\dots, 2^{19}$ (6 values) to fit the regression model.
We also report the observed $-\log_2 \IV$ for $n=2^{19}$
and use $e19$ as a shorthand for this value in the tables. 
We use exactly the same procedure for the GLRDE.
For the KDE, these values are for the MISE instead of the IV.
In all cases, we used a normal kernel and a bandwidth $h$ 
selected by the methodology described in \cite{vBEN21a}.
For some examples, we tried CDEs based on different choices of $\cG$ and 
a convex combination as in Section~\ref{sec:combination}.

%
We report results with the following types of point sets:
\begin{verse}
(1) independent points (MC);\\
(2) a randomly-shifted lattice rule (Lat+s);\\
(3) a randomly-shifted lattice rule with a baker's transformation (Lat+s+b); \\
(4) Sobol' points with a left random matrix scramble and random digital shift (Sob+LMS).
\end{verse}
The short names in parentheses are used in the plots and tables.
For the definitions and properties of these RQMC point sets, 
see \cite{vLEC00b,vOWE03a,vLEC09f,vLEC18a}.
They are implemented in SSJ \citep{iLEC16j}, which we used for our experiments.
The parameters of the lattice rules were found with the Lattice Builder software of \cite{vLEC16a},
using a fast-CBC construction method with the $\mathcal{P}_2$ criterion and order dependent weights 
$\gamma_{\frakv} = \rho^{|\frakv|}$, with $\rho$ ranging from 0.05 to 0.8, depending on the example
(a larger $\rho$ was used when the dimension $s$ was smaller).
The baker's transformation sometimes improves the convergence rate by making the integrand periodic
\citep{vHIC02a}, but it can also increase the variation of the integrand, so its impact on
the variance can go either way.
\hpierre{The lattice parameters were found with Lattice Builder with $\mathcal{P}_2$,
  with weights $\gamma_\frakv = 0.6^{|\frakv|}$.}

\subsection{Displacement of a cantilever beam}
\label{sec:cantilever}

We consider the following (real-life) model for the displacement $X$ of a cantilever beam 
with horizontal and vertical loads, taken from \cite{sBIN17a}:
\begin{equation}       \label{eq:beam-displacement}
   X = h(Y_1,Y_2,Y_3) = \frac{4 \ell^3}{Y_1 wt}\sqrt{\frac{Y_2^2}{w^4} + \frac{Y_3^2}{t^4}}
\end{equation}
in which $\ell = 100$, $w = 4$ and $t = 2$ are constants (in inches), 
while $Y_1$ (Young's modulus), $Y_2$ (the horizontal load), and $Y_3$ (the vertical load), 
are independent normal random variables, $Y_j \sim \cN(\mu_j,\sigma_j^2)$, 
i.e., normal with mean $\mu_j$ and variance $\sigma_j^2$.
The parameter values are $\mu_1 = 2.9\times 10^7$, $\sigma_1 = 1.45 \times 10^{6}$,
$\mu_2 = 500$, $\sigma_2 = 100$,
$\mu_3 = 1000$, $\sigma_3 = 100$.
We will denote $\kappa = 4\ell^3/(wt) = 5\times 10^5$.
The goal is to estimate the density of $X$ over the interval $[3.1707,\, 5.6675]$, 
which covers about 99\% of the density (it clips $0.5\%$ on each side).
It is possible to have $X < 0$ in this model, but the probability is 
$\PP[Y_1 < 0] = \Phi(-20) = 2.8\times 10^{-89}$, which is negligible,
so we can assume that $Y_1 > 0$.
This example fits the framework of Section~\ref{sec:small-examples}, with $d=3$.
We can hide any of the three random variables for the conditioning, and we will examine each case.

Conditioning on $\cG_{-1}$ means hiding $Y_1$.  We have 
\[
  X = \frac{\kappa}{Y_1}\sqrt{\frac{Y_2^2}{w^4} + \frac{Y_3^2}{t^4}} \le x
\quad \mbox{ if and only if } \quad
  Y_1 \ge \frac{\kappa}{x}\sqrt{\frac{Y_2^2}{w^4} + \frac{Y_3^2}{t^4}}  \eqdef W_1(x).
\]
Note that $W_1(x) > 0$ if and only if $x > 0$.
For $x > 0$,
\[
  F(x \mid \cG_{-1}) 
	 = \PP[Y_1\ge W_1(x)\mid W_1(x)] = 1 - \Phi((W_1(x)-\mu_1)/\sigma_1)
\]
which is continuous and differentiable in $x$, and  
\[
  f(x\mid \cG_{-1}) = -{\phi((W_1(x)-\mu_1)/\sigma_1) W'_1(x)} / \sigma_1  
	        = {\phi((W_1(x)-\mu_1)/\sigma_1) W_1(x)} /( x \sigma_1).
\]
\hflorian{To check the bounded HK variation is elementary, but it involves quite complicated formulas. In the end, it once again boils down to the fact that $\phi$ times a polynomial is integrable.}%
If we condition on $\cG_{-2}$ instead, i.e., we hide $Y_2$, we have ${X}\le x$ if and only if 
\[
  Y_2^2 \le w^4 \left((x Y_1/\kappa)^2 - Y_3^2/t^4\right)  \eqdef W_2(x).
\]
If $W_2(x) \le 0$, then $f(x \mid \cG_{-2}) = F(x \mid \cG_{-2}) = \PP[X\le x\mid W_2(x)] = 0$. 
For $W_2(x) > 0$, we have 
\begin{eqnarray*}
  F(x \mid \cG_{-2}) &=& \PP[X\le x\mid W_2(x)] 
	 = \PP\left[-\sqrt{W_2(x)}\le Y_2 \le \sqrt{W_2(x)}\mid W_2(x)\right] \\
	 &=& \Phi((\sqrt{W_2(x)}-\mu_2)/\sigma_2) - \Phi(-(\sqrt{W_2(x)}+\mu_2)/\sigma_2),
\end{eqnarray*}
which is again continuous and differentiable in $x$, and
\[
  f(x\mid \cG_{-2}) = \frac{\phi((\sqrt{W_2(x)}-\mu_2)/\sigma_2) + \phi(-(\sqrt{W_2(x)}+\mu_2)/\sigma_2)}{
	                      (\sigma_2\sqrt{W_2(x)}) / ( w^4 x (Y_1 /\kappa)^2)  } > 0.
\]
If we condition on $\cG_{-3}$, the analysis is the same as for $\cG_{-2}$, by symmetry, and we get
\[
  f(x\mid \cG_{-3}) = \frac{\phi((\sqrt{W_3(x)}-\mu_3)/\sigma_3) + \phi(-(\sqrt{W_3(x)}+\mu_3)/\sigma_3)}{
	                       (\sigma_3\sqrt{W_3(x)}) / (t^4 x (Y_1 /\kappa)^2 ) } > 0
\]
for $W_3(x) > 0$, where $W_3(x)$ is defined in a similar way as $W_2(x)$.
\hpierre{In addition to testing these three ways of conditioning, which yield three different CDEs, 
we also tested a convex combination of the three,
as explained in Section~\ref{sec:combination}, with coefficients $\beta_\ell$ that do not depend on $x$.}
\hflorian{Moved and adapted this paragraph after the derivation of the GLRDE's.}

For the GLRDE, we write $h(\bY) = (\kappa/Y_1) S^{1/2}$ where 
$S = Y_2^2/w^4 + Y_3^2/t^4$, and denote 
$Z_j = (Y_j-\mu_j)/\sigma_j^2 = -\partial \log f_j(Y_j)/\partial Y_j$ for $j=1,2,3$.
With this notation, we obtain 
$h_1(\bY) = -h(\bY)/Y_1$, $h_{11}(\bY) = 2h(\bY)/Y_1^2$, 
$h_2(\bY) = (\kappa / Y_1)(Y_2/w^4) S^{-1/2} = h(\bY) Y_2 / (S w^4)$,
$h_{22}(\bY) = (\kappa/(Y_1 w^4)) (S^{-1/2} - S^{-3/2} Y_2^2 /w^4)$, 
$h_3(\bY) = (\kappa / Y_1)(Y_3/t^4) S^{-1/2} = h(\bY) Y_3 / (S t^4)$,
$h_{33}(\bY) = (\kappa /(Y_1 t^4)) (S^{-1/2} - S^{-3/2} Y_3^2 /t^4)$.
With a little calculation, this gives
\begin{gather*}
 \Psi_1(\bY) = \frac{Y_1 Z_1 - 2}{h(\bY)}, \qquad 
 \Psi_2(\bY) = - \frac{Y_2 Z_2 S + Y_3^2/t^4}{h(\bY)\,Y_2^2/w^4}, \qquad 
 \Psi_3(\bY) = - \frac{Y_3 Z_3 S + Y_2^2/w^4}{h(\bY)\,Y_3^2/t^4}.
\end{gather*}
In addition to testing the individual estimators derived above, 
we also tested convex combinations of the three CDEs and of the three GLRDEs,
as explained in Section~\ref{sec:combination}, with coefficients $\beta_\ell$ that do not depend on $x$.
\hpierre{Referee 2 asks us to also report on the GLRDE using $Y_2$ and $Y_3$, 
 and the linear combination. Then the table can have one panel for $\hat\nu$ and 
 one panel for e19. Of if the results with $Y_2$ and $Y_3$ and just worse than for $Y_1$,
 then perhaps we can just say that in one line, to save space.}
\begin{table}[!htbp]
  \caption{Values of $\hat\nu$ and e19 with a CDE for each choice of $\cG_{-k}$ and
	  for the best convex combination (CDE-c), for the GLRDE with each $\Psi_j$ and
	  for the best convex combination (GLRDE-c), and for the KDE, for the cantilever beam model.}
  \label{tab:cantilever-results}
  \centering
  \begin{tabular}{|l| c c c c | c c c c | c|}
     \cline{2-10}
  \multicolumn{1}{l|}{}	& $\cG_{-1}$	& $\cG_{-2}$	& $\cG_{-3}$	& CDE-c	
						& $\Psi_1$  	& $\Psi_2$ 		& $\Psi_3 $ 	& GLRDE-c 	& KDE \\
     \cline{2-10}
  \multicolumn{1}{l|}{} 	& \multicolumn{9}{c|}{e19}\\
	\hline
	MC			& 19.3		& 14.5		& 22.8		& 22.5  
				& 14.1 		& 4.5		& 15.8		& 16.3	&  15.8 \\
	Lat+s		& 39.8		& 25.2		& 41.6		& 41.9   
				& 23.4 		& -2.5 		& 26.4		& 26.5 	&   21.9	\\
	Lat+s+b		& 44.5		& 23.7		& 46.8		& 47.0 
				& 23.3 		& 5.7 		& 24.7 		& 25.1 	& 21.0	\\
	Sob+LMS		& 44.0		& 23.6		& 45.7		& 46.1  
				& 23.4 		& 2.8		& 25.5		& 25.9 	& 21.5		\\
	\hline
	 \multicolumn{1}{l|}{} &\multicolumn{9}{c|}{$\hat\nu$} \\
	\hline
	MC			& 0.97		& 0.98		& 0.99		& 0.98	
				& 1.02 		& 0.55		& 0.94 		& 0.95 	& 0.76 \\
	Lat+s		& 1.99		& 1.95		& 2.06		& 2.04 
				& 1.38 		&  ---		& 1.51 		& 1.52 	& 1.03	\\
	Lat+s+b		& 2.24		& 2.08		& 2.27		& 2.25 
				& 1.37 		&  --- 	& 1.24 		& 1.25 	& 0.93	\\
	Sob+LMS		& 2.21		& 2.03		& 2.21		& 2.21	
				& 1.32 		&  ---	& 1.31		& 1.32 	& 0.97		\\
	\hline
  \end{tabular}
 \end{table}

Table~\ref{tab:cantilever-results} summarizes the results.
\hflorian{For this example, the lattice parameters were obtained using $\mathcal{P}_2$
  with weights $\gamma_\frakv = 0.05^{|\frakv|}$.}%
The MISE is about ${2^{-47}}$ for the best CDE+RQMC compared with ${2^{-15.8}}$ for the usual KDE+MC,
a gain by a factor of over $2^{31} \approx$ 2 billions.
This is probably much better accuracy than required in practice for this particular application.
With RQMC, the convergence rate $\hat\nu$ is around 2 in all cases with the CDE methods,
and much less for GLRDE and KDE. 
The GLRDE using $\Psi_2$ behaves very badly (the estimates $\hat\nu$ with RQMC 
are meaningless), but with $\Psi_1$ and $\Psi_3$ (the best choice), 
it performs better that the KDE. Note that the denominator of $\Psi_2$ takes much smaller
values on average than that of $\Psi_3$, and this can explain its larger variance.

For the CDE with lattice rules, the baker's transformation helps significantly for the CDE. 
Conditioning on $\cG_{-2}$ does not give as much reduction as for the other choices.   
To provide visual insight, Figure~\ref{fig:canti-cde} shows  plots of five realizations
of the conditional density for $\cG_{-1}$, $\cG_{-2}$, and $\cG_{-3}$.
The realizations of $f(\cdot\mid \cG_{-2})$ have high narrow peaks, which explains the larger variance.
The average of the five realizations is shown in orange (dotted in the b/w version) and the true density in black.
In Figure~\ref{fig:canti-MCvsRQMC}, we zoom in on part of the estimated densities to show the difference 
between MC and RQMC.  In each panel one can see the CDE using MC (in orange), RQMC (in green), 
and the ``true density'' (black, dashed) estimated with RQMC using a large number of samples.
We have $\cG_{-1}$ with $n=2^{10}$ on the left and $\cG_{-2}$ with $n=2^{16}$ on the right.
In both cases, the RQMC estimate is closer to the true density, and on the left it oscillates less.
If we repeat this experiment several times, the orange curve would vary much more than
the green one across the realizations.

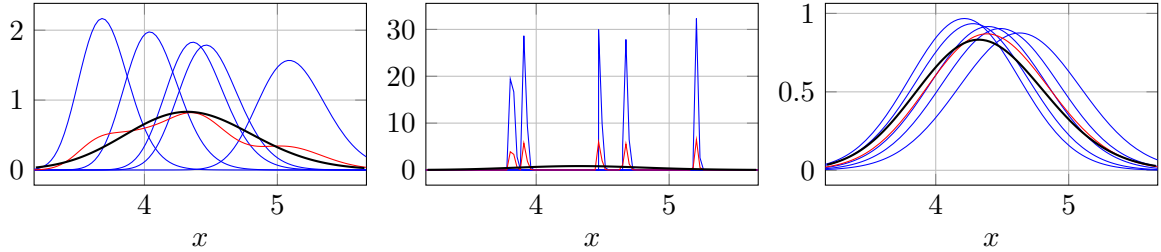
\begin{figure}[hbt]
\begin{center}
\begin{tikzpicture} 
  \begin{axis}[ 
  	cycle list name=canti,
  legend style={at={(0.6,1.3)}, anchor={north west}},
  xlabel=$x$,
  ylabel=,
  grid,
  xmin=3.17,
  xmax=5.67,
  width = 6cm,
  height = 4cm,
  ] 
  \addplot table[x=x,y=dens] { 
  	x  dens 
 3.17067	6.03389E-13
3.19518	2.95494E-12
3.21968	1.35157E-11
3.24625	6.52285E-11
3.27282	2.9229E-10
3.29762	1.11191E-9
3.32243	3.98428E-9
3.34675	1.31664E-8
3.37107	4.12486E-8
3.39745	1.34321E-7
3.42383	4.12673E-7
3.44845	1.1184E-6
3.47307	2.89187E-6
3.49975	7.69324E-6
3.52643	0.0000194456
3.55263	0.000046068
3.57882	0.000104269
3.60326	0.000214713
3.62769	0.000426125
3.65418	0.000860732
3.68068	0.00166992
3.70542	0.00299347
3.73015	0.00519375
3.7544	0.00864561
3.77865	0.0139768
3.80496	0.0227972
3.83127	0.0360112
3.85581	0.0536538
3.88036	0.0778904
3.90697	0.113413
3.93358	0.160511
3.95971	0.2198
3.98583	0.293431
4.01019	0.375772
4.03456	0.4714
4.06098	0.5894
4.08741	0.72047
4.13673	0.989346
4.19019	1.28788
4.21642	1.42444
4.24266	1.5474
4.26714	1.64571
4.29162	1.72491
4.31816	1.7864
4.3447	1.82072
4.36948	1.82771
4.39425	1.81105
4.41855	1.7731
4.44284	1.71579
4.46919	1.63473
4.49555	1.53765
4.52014	1.43615
4.54473	1.32746
4.59804	1.08175
4.65037	0.847784
4.69919	0.651805
4.72566	0.55762
4.75213	0.472735
4.77683	0.402011
4.80154	0.339361
4.82576	0.285454
4.84998	0.238529
4.87627	0.194884
4.90255	0.158076
4.92707	0.129215
4.95159	0.105005
4.97818	0.0833216
5.00476	0.0656937
5.02958	0.0523247
5.0544	0.0414608
5.07874	0.0328428
5.10308	0.0258956
5.12947	0.0199102
5.15587	0.0152308
5.18051	0.0118092
5.20514	0.00911887
5.23184	0.00686021
5.25854	0.00513811
5.28475	0.0038526
5.31096	0.00287732
5.33541	0.00218399
5.35986	0.00165246
5.38638	0.00121704
5.41289	0.000893265
5.43764	0.000667237
5.46239	0.000497026
5.48666	0.000371426
5.51092	0.000276887
5.53725	0.000200789
5.56358	0.00014522
5.58814	0.000107085
5.61271	0.000078798
5.64013	0.0000558177
5.66755	0.0000394444 
  }; 
  %
  \addplot table[x=x,y=dens] { 
  	x  dens 
3.17067	3.83213E-15
3.19518	2.23455E-14
3.21968	1.21125E-13
3.24625	6.99138E-13
3.27282	3.72757E-12
3.29762	1.66038E-11
3.32243	6.93729E-11
3.34675	2.65451E-10
3.37107	9.59299E-10
3.39745	3.63224E-9
3.42383	1.29212E-8
3.44845	4.00062E-8
3.47307	1.17778E-7
3.49975	3.59289E-7
3.52643	1.03747E-6
3.55263	2.79112E-6
3.57882	7.14973E-6
3.60326	0.0000164757
3.62769	0.0000364898
3.65418	0.0000827654
3.68068	0.000179758
3.70542	0.000357075
3.73015	0.000684801
3.7544	0.00125456
3.77865	0.00222701
3.80496	0.00401034
3.83127	0.00697621
3.85581	0.0113475
3.88036	0.0179472
3.90697	0.0286103
3.93358	0.0442293
3.95971	0.0659067
3.98583	0.0955423
4.01019	0.131884
4.03456	0.178028
4.06098	0.240552
4.08741	0.317163
4.11207	0.401866
4.13673	0.499143
4.16346	0.617731
4.19019	0.747992
4.24266	1.02512
4.29162	1.28472
4.31816	1.41469
4.3447	1.53082
4.36948	1.62294
4.39425	1.69633
4.41855	1.74801
4.44284	1.77838
4.46919	1.78676
4.49555	1.77011
4.52014	1.73334
4.54473	1.67804
4.57139	1.6
4.59804	1.50657
4.62421	1.40353
4.65037	1.29291
4.69919	1.07787
4.75213	0.850228
4.80154	0.657859
4.82576	0.573402
4.84998	0.496129
4.87627	0.420622
4.90255	0.353739
4.92707	0.298854
4.95159	0.250835
4.97818	0.205976
5.00476	0.16793
5.02958	0.13791
5.0544	0.112598
5.07874	0.0917946
5.10308	0.0744444
5.12947	0.0589739
5.15587	0.0464514
5.18051	0.0369902
5.20514	0.0293195
5.23184	0.0226771
5.25854	0.0174509
5.28475	0.0134296
5.31096	0.0102883
5.33541	0.00799269
5.35986	0.00618665
5.38638	0.00466781
5.41289	0.00350786
5.43764	0.00267738
5.46239	0.00203696
5.48666	0.00155341
5.51092	0.00118127
5.53725	0.000874888
5.56358	0.000645961
5.58814	0.0004854
5.61271	0.000363838
5.64013	0.000262982
5.66755	0.00018954
  }; 
  %
  \addplot table[x=x,y=dens] { 
  	x  dens 
3.17067	9.05389E-33
3.19518	1.79704E-31
3.21968	3.20991E-30
3.27282	1.17855E-27
3.32243	1.96063E-25
3.37107	2.0814E-23
3.42383	2.28115E-21
3.47307	1.32972E-19
3.52643	7.89211E-18
3.55263	5.2184E-17
3.57882	3.20917E-16
3.60326	1.63991E-15
3.62769	7.90094E-15
3.65418	4.07578E-14
3.68068	1.97095E-13
3.70542	8.11323E-13
3.73015	3.16863E-12
3.7544	1.14671E-11
3.77865	3.95792E-11
3.80496	1.44113E-10
3.83127	4.98061E-10
3.85581	1.5138E-9
3.88036	4.40961E-9
3.90697	1.34135E-8
3.93358	3.89397E-8
3.95971	1.06096E-7
3.98583	2.77157E-7
4.01019	6.54198E-7
4.03456	1.49207E-6
4.06098	3.51432E-6
4.08741	7.96993E-6
4.11207	0.0000165584
4.13673	0.0000333544
4.16346	0.0000688685
4.19019	0.000137436
4.21642	0.000262303
4.24266	0.00048547
4.26714	0.00083932
4.29162	0.0014151
4.31816	0.0024255
4.3447	0.00404365
4.36948	0.00636051
4.39425	0.00978031
4.41855	0.0145968
4.44284	0.0213421
4.46919	0.0315082
4.49555	0.0454706
4.52014	0.0627709
4.54473	0.0850543
4.57139	0.115833
4.59804	0.154537
4.62421	0.201153
4.65037	0.257002
4.67478	0.317777
4.69919	0.386978
4.72566	0.471211
4.75213	0.564177
4.80154	0.756097
4.84998	0.956231
4.90255	1.1676
4.92707	1.25791
4.95159	1.33982
4.97818	1.41681
5.00476	1.47943
5.02958	1.52346
5.0544	1.55263
5.07874	1.56638
5.10308	1.56547
5.12947	1.54842
5.15587	1.51569
5.18051	1.47227
5.20514	1.41793
5.23184	1.34861
5.25854	1.27063
5.31096	1.10095
5.35986	0.934735
5.41289	0.759038
5.46239	0.608178
5.51092	0.477816
5.53725	0.415254
5.56358	0.358584
5.58814	0.310957
5.61271	0.268246
5.64013	0.226099
5.66755	0.189409
  }; 
  %
  \addplot table[x=x,y=dens] { 
  	x  dens 
3.17067	0.0110383
3.19518	0.0194567
3.21968	0.0329484
3.24625	0.0558468
3.27282	0.0906543
3.29762	0.137279
3.32243	0.200815
3.34675	0.282321
3.37107	0.384926
3.39745	0.521165
3.42383	0.682596
3.44845	0.85324
3.47307	1.03857
3.52643	1.45878
3.55263	1.6546
3.57882	1.82968
3.60326	1.96605
3.62769	2.07007
3.65418	2.14134
3.68068	2.16686
3.70542	2.14964
3.73015	2.09527
3.7544	2.00996
3.77865	1.89817
3.80496	1.75382
3.83127	1.59317
3.88036	1.27569
3.93358	0.94523
3.95971	0.798997
3.98583	0.66667
4.01019	0.556804
4.03456	0.460235
4.06098	0.370137
4.08741	0.29435
4.11207	0.235383
4.13673	0.186545
4.16346	0.143591
4.19019	0.109463
4.21642	0.083108
4.24266	0.0625626
4.26714	0.0476504
4.29162	0.0360466
4.31816	0.0264399
4.3447	0.0192521
4.36948	0.0142264
4.39425	0.0104516
4.41855	0.00768333
4.44284	0.00561938
4.46919	0.00398007
4.49555	0.00280352
4.52014	0.00201199
4.54473	0.00143765
4.57139	0.000994051
4.59804	0.000684139
4.62421	0.000472037
4.65037	0.000324378
4.67478	0.000227815
4.69919	0.00015949
4.72566	0.000107975
4.75213	0.0000728535
4.77683	0.0000503144
4.80154	0.000034657
4.82576	0.0000239898
4.84998	0.0000165682
4.87627	0.0000110608
4.90255	7.36662E-6
4.92707	5.03146E-6
4.95159	3.43024E-6
4.97818	2.26008E-6
5.00476	1.48628E-6
5.02958	1.00332E-6
5.0544	6.76326E-7
5.07874	4.58843E-7
5.10308	3.10923E-7
5.12947	2.03601E-7
5.15587	1.33164E-7
5.18051	8.95071E-8
5.20514	6.01101E-8
5.23184	3.90102E-8
5.25854	2.52955E-8
5.28475	1.65202E-8
5.31096	1.07823E-8
5.33541	7.23817E-9
5.35986	4.85692E-9
5.38638	3.14997E-9
5.41289	2.0422E-9
5.43764	1.3623E-9
5.46239	9.08583E-10
5.48666	6.10739E-10
5.51092	4.10497E-10
5.53725	2.66741E-10
5.56358	1.73331E-10
5.58814	1.15934E-10
5.61271	7.75519E-11
5.64013	4.95156E-11
5.66755	3.16227E-11
  }; 
  %
  \addplot table[x=x,y=dens] { 
  	x  dens 
3.17067	4.76911E-7
3.19518	1.38589E-6
3.21968	3.81544E-6
3.24625	0.0000107865
3.27282	0.0000287596
3.29762	0.0000683006
3.32243	0.000154746
3.34675	0.000330181
3.37107	0.000675599
3.39745	0.00140344
3.42383	0.00278555
3.44845	0.00507705
3.47307	0.00892041
3.49975	0.0157891
3.52643	0.026856
3.55263	0.0435883
3.57882	0.0682857
3.60326	0.100661
3.62769	0.144215
3.65418	0.206471
3.68068	0.286568
3.70542	0.378822
3.73015	0.488389
3.7544	0.611998
3.77865	0.749932
3.83127	1.08343
3.88036	1.40294
3.90697	1.56287
3.93358	1.70408
3.95971	1.81826
3.98583	1.90335
4.01019	1.95361
4.03456	1.97451
4.06098	1.96413
4.08741	1.9212
4.11207	1.85466
4.13673	1.76626
4.16346	1.65064
4.19019	1.51994
4.24266	1.24092
4.29162	0.981034
4.31816	0.848846
4.3447	0.726012
4.36948	0.621149
4.39425	0.526487
4.41855	0.443806
4.44284	0.371001
4.46919	0.302697
4.49555	0.244733
4.52014	0.199122
4.54473	0.160828
4.57139	0.126583
4.59804	0.0988403
4.62421	0.0769534
4.65037	0.0594913
4.67478	0.0465096
4.69919	0.0361548
4.72566	0.0273447
4.75213	0.0205544
4.77683	0.0156626
4.80154	0.0118761
4.82576	0.00901261
4.84998	0.00680955
4.87627	0.00499984
4.90255	0.00365366
4.92707	0.00271544
4.95159	0.00201048
4.97818	0.00144544
5.00476	0.00103495
5.02958	0.000754939
5.0544	0.00054889
5.07874	0.000400363
5.10308	0.000291188
5.12947	0.000205509
5.15587	0.000144593
5.18051	0.000103871
5.20514	0.0000744353
5.23184	0.0000517387
5.25854	0.0000358694
5.28475	0.0000249741
5.31096	0.0000173491
5.33541	0.0000123269
5.35986	8.74304E-6
5.38638	6.01276E-6
5.41289	4.12742E-6
5.43764	2.90031E-6
5.46239	2.0351E-6
5.48666	1.43607E-6
5.51092	1.01212E-6
5.53725	6.91528E-7
5.56358	4.71882E-7
5.58814	3.29981E-7
5.61271	2.30528E-7
5.64013	1.54305E-7
5.66755	1.03178E-7
  }; 
  %
  \addplot table[x=x,y=dens] { 
  	x  dens 
3.17067	0.00220776
3.19518	0.00389161
3.21968	0.00659044
3.24625	0.0111715
3.27282	0.0181366
3.29762	0.0274696
3.32243	0.040194
3.34675	0.0565303
3.37107	0.0771202
3.39745	0.104514
3.42383	0.137076
3.44845	0.171664
3.47307	0.209499
3.52643	0.297132
3.55263	0.339648
3.57882	0.379616
3.60326	0.413389
3.62769	0.442949
3.65418	0.469751
3.68068	0.491056
3.70542	0.506363
3.73015	0.517907
3.7544	0.526371
3.77865	0.53286
3.80496	0.538616
3.83127	0.543918
3.85581	0.549087
3.88036	0.554894
3.90697	0.562218
3.93358	0.57081
3.95971	0.580592
3.98583	0.591799
4.01019	0.603615
4.03456	0.616834
4.06098	0.632845
4.08741	0.650638
4.11207	0.668796
4.13673	0.688265
4.19019	0.733082
4.21642	0.754905
4.24266	0.775298
4.26714	0.792041
4.29162	0.805625
4.31816	0.815759
4.3447	0.82017
4.36948	0.818479
4.39425	0.81082
4.41855	0.797439
4.44284	0.778427
4.46919	0.751933
4.49555	0.720153
4.52014	0.686679
4.54473	0.650564
4.59804	0.568476
4.65037	0.491502
4.67478	0.459376
4.69919	0.430593
4.72566	0.403682
4.75213	0.381554
4.77683	0.365266
4.80154	0.353046
4.82576	0.344708
4.84998	0.339543
4.87627	0.336909
4.90255	0.336616
4.92707	0.33774
4.95159	0.339535
4.97818	0.341511
5.00476	0.342818
5.02958	0.34289
5.0544	0.341447
5.07874	0.338284
5.10308	0.333221
5.12947	0.325503
5.15587	0.315503
5.18051	0.304235
5.20514	0.291289
5.23184	0.275641
5.25854	0.25865
5.31096	0.222827
5.35986	0.188517
5.41289	0.152689
5.46239	0.122143
5.51092	0.095855
5.53725	0.0832662
5.56358	0.0718751
5.58814	0.0623099
5.61271	0.0537378
5.64013	0.0452836
5.66755	0.0379277
  }; 
  %
  \addplot table[x=x,y=dens] { 
  	x  dens 
  	3.1831588899999996  0.03541944954473747 
  	3.2081276699999997  0.04085878094512657 
  	3.2330964499999997  0.04696621728929074 
  	3.2580652299999997  0.053795924416809755 
  	3.2830340099999997  0.061402190795344445 
  	3.3080027899999997  0.06983870844646409 
  	3.3329715699999993  0.07915777246889782 
  	3.3579403499999994  0.08940940549185124 
  	3.3829091299999994  0.10064041591176304 
  	3.4078779099999994  0.11289340132854127 
  	3.4328466899999994  0.12620571112788104 
  	3.4578154699999994  0.1406083845751079 
  	3.4827842499999995  0.15612508300829162 
  	3.5077530299999995  0.1727710366594544 
  	3.5327218099999995  0.19055202820834674 
  	3.5576905899999995  0.20946343630473932 
  	3.5826593699999996  0.22948936291256775 
  	3.6076281499999996  0.2506018683729689 
  	3.632596929999999  0.2727603375091096 
  	3.657565709999999  0.29591099887600086 
  	3.682534489999999  0.3199866173850802 
  	3.7075032699999992  0.344906378018264 
  	3.7324720499999993  0.3705759752234374 
  	3.7574408299999993  0.3968879189073853 
  	3.7824096099999993  0.42372206378931676 
  	3.8073783899999993  0.4509463643419893 
  	3.8323471699999994  0.4784178527391421 
  	3.8573159499999994  0.5059838322710104 
  	3.8822847299999994  0.5334832737177196 
  	3.9072535099999994  0.5607483973205487 
  	3.9322222899999995  0.5876064184039985 
  	3.9571910699999995  0.6138814305089652 
  	3.9821598499999995  0.6393963962274629 
  	4.0071286299999995  0.6639752128922177 
  	4.0320974099999995  0.6874448179666895 
  	4.057066189999999  0.7096372974749225 
  	4.08203497  0.7303919601591247 
  	4.107003749999999  0.7495573402794092 
  	4.131972529999999  0.7669930930760204 
  	4.156941309999999  0.7825717488716967 
  	4.181910089999999  0.7961802945493632 
  	4.206878869999999  0.8077215546231752 
  	4.231847649999999  0.8171153482333857 
  	4.256816429999999  0.8242994030233242 
  	4.281785209999999  0.8292300118761453 
  	4.306753989999999  0.8318824237616231 
  	4.331722769999999  0.8322509653306361 
  	4.356691549999999  0.8303488952567076 
  	4.381660329999999  0.8262079985230896 
  	4.406629109999999  0.8198779327649174 
  	4.431597889999999  0.8114253432805654 
  	4.456566669999999  0.8009327673288784 
  	4.481535449999999  0.7884973517410868 
  	4.506504229999999  0.7742294106420703 
  	4.531473009999999  0.7582508521473438 
  	4.556441789999999  0.7406935042616141 
  	4.581410569999999  0.7216973708500272 
  	4.606379349999999  0.7014088485044783 
  	4.631348129999999  0.6799789344185198 
  	4.656316909999999  0.657561454070704 
  	4.681285689999999  0.6343113356614026 
  	4.706254469999999  0.6103829559284821 
  	4.731223249999999  0.5859285792664803 
  	4.756192029999999  0.5610969090801613 
  	4.7811608099999985  0.5360317671062783 
  	4.806129589999999  0.510870913125332 
  	4.831098369999999  0.48574501414500676 
  	4.8560671499999986  0.46077676884545243 
  	4.881035929999999  0.43608018990974223 
  	4.906004709999999  0.41176004387986215 
  	4.930973489999999  0.3879114454359629 
  	4.955942269999999  0.36461960053522274 
  	4.980911049999999  0.34195969069869997 
  	5.005879829999999  0.319996888922119 
  	5.030848609999999  0.2987864962187401 
  	5.055817389999999  0.27837418668244285 
  	5.080786169999999  0.2587963481776362 
  	5.105754949999999  0.24008050530601197 
  	5.130723729999999  0.22224581114573966 
  	5.155692509999999  0.20530359438043408 
  	5.180661289999998  0.18925794880188443 
  	5.205630069999999  0.17410635274951383 
  	5.230598849999999  0.1598403068050655 
  	5.255567629999998  0.14644597895853026 
  	5.280536409999998  0.13390484746611075 
  	5.305505189999998  0.12219433269931725 
  	5.330473969999998  0.11128841040567204 
  	5.355442749999998  0.10115819993675756 
  	5.380411529999998  0.09177252212392818 
  	5.405380309999998  0.0830984225728834 
  	5.430349089999998  0.07510165718780594 
  	5.455317869999998  0.06774713770837665 
  	5.480286649999998  0.06099933593705869 
  	5.505255429999998  0.05482264614161529 
  	5.530224209999998  0.049181705833250086 
  	5.555192989999998  0.044041675741716725 
  	5.580161769999998  0.03936848033526607 
  	5.605130549999998  0.035129010667174894 
  	5.630099329999998  0.031291291676020475 
  	5.655068109999998  0.02782461632879747 
  }; 
  
  %
 
  \end{axis}
  \end{tikzpicture}
  \begin{tikzpicture} 
  \begin{axis}[ 
  	cycle list name=canti,
  xlabel=$x$,
  ylabel=,
  grid,
  xmin=3.17,
  xmax=5.67,
  width = 6cm,
  height = 4cm,
  ] 
  \addplot table[x=x,y=dens] { 
  	x  dens 
3.17067  0.0 
4.65	0.0
4.65037	5.32317
4.67478	27.8561
4.69919	6.45197
4.72566	0.297517
4.75213	0.00546088
4.77683	0.0000737473
4.80154	6.55342E-7
4.82576	4.60985E-9
4.84998	2.47513E-11
4.87627	6.55824E-14
4.90255	1.37094E-16
4.92707	3.58455E-19
4.95159	7.94648E-22
5.00476	8.45136E-28
5.0544	1.29192E-33
5.10308	1.63249E-39
5.15587	4.1333E-46
5.20514	1.94381E-52
5.25854	1.81405E-59
5.31096	1.55793E-66
5.35986	2.89835E-73
5.41289	1.04674E-80
5.46239	8.73693E-88
5.51092	7.6574E-95
5.56358	1.25685E-102
5.61271	5.22614E-110
5.64013	3.55135E-114
5.66755	2.23224E-118
  }; 
  %
  \addplot table[x=x,y=dens] { 
  	x  dens 
3.17067  0.0 
4.469 	0.0
4.46919	29.9629
4.49555	9.95006
4.52014	0.493799
4.54473	0.00879738
4.57139	0.0000519815
4.59804	1.71775E-7
4.62421	4.06277E-10
4.65037	6.74178E-13
4.67478	1.31286E-15
4.69919	2.03623E-18
4.72566	1.45561E-21
4.75213	8.40597E-25
4.80154	4.55658E-31
4.84998	1.86417E-37
4.90255	1.24695E-44
4.95159	1.57027E-51
5.00476	3.23909E-59
5.0544	1.42575E-66
5.10308	6.07078E-74
5.15587	4.16456E-82
5.20514	7.06363E-90
5.25854	1.86172E-98
5.31096	4.92428E-107
5.35986	3.60407E-115
5.41289	3.9179E-124
5.46239	1.24158E-132
5.51092	4.38687E-141
5.56358	2.19133E-150
5.61271	3.46548E-159
5.64013	3.77735E-164
5.66755	3.79238E-169
  }; 
  %
  \addplot table[x=x,y=dens] { 
  	x  dens 
 3.17067  0.0 
 5.18	0.0
 5.18051	2.68493
5.20514	32.3233
5.23184	2.61168
5.25854	0.0306639
5.28475	0.000135162
5.31096	2.92368E-7
5.33541	5.90183E-10
5.35986	8.22268E-13
5.38638	4.64834E-16
5.41289	1.93194E-19
5.43764	1.0577E-22
5.46239	4.70743E-26
5.51092	7.44797E-33
5.56358	1.56628E-40
5.61271	6.12678E-48
5.64013	3.63149E-52
5.66755	1.8667E-56
  }; 
  %
  \addplot table[x=x,y=dens] { 
  	x  dens 
    3.17067  0.0 
    3.778 	0.0
 3.77865	0.0131306
3.80496	19.4247
3.83127	16.6565
3.85581	2.81143
3.88036	0.198969
3.90697	0.00584195
3.93358	0.000103795
3.95971	1.35601E-6
3.98583	1.30049E-8
4.01019	1.34625E-10
4.03456	1.14148E-12
4.06098	5.28906E-15
4.08741	2.03064E-17
4.11207	9.67573E-20
4.13673	4.01695E-22
4.19019	1.79977E-27
4.24266	6.01151E-33
4.29162	3.08067E-38
4.3447	3.72121E-44
4.39425	7.68402E-50
4.44284	1.48212E-55
4.49555	6.61874E-62
4.54473	5.74805E-68
4.59804	1.11745E-74
4.65037	2.0915E-81
4.69919	8.40958E-88
4.75213	7.25767E-95
4.80154	1.4093E-101
4.84998	2.89262E-108
4.90255	1.21767E-115
4.95159	1.24166E-122
5.00476	2.49352E-130
5.0544	1.25905E-137
5.10308	6.99899E-145
5.15587	7.30155E-153
5.20514	2.04129E-160
5.25854	1.02781E-168
5.31096	5.65329E-177
5.35986	8.86032E-185
5.41289	2.37884E-193
5.46239	1.87686E-201
5.51092	1.72069E-209
5.56358	2.57299E-218
5.61271	1.19545E-226
5.64013	2.43018E-231
5.66755	4.61902E-236
  }; 
  %
  \addplot table[x=x,y=dens] { 
  	x  dens 
   3.17067  0.0 
   3.17	0.0
 3.88036	0.140937
3.90697	28.6437
3.93358	9.35235
3.95971	0.510137
3.98583	0.010258
4.01019	0.000145768
4.03456	1.33036E-6
4.06098	5.47622E-9
4.08741	1.60002E-11
4.11207	5.30019E-14
4.13673	1.40573E-16
4.16346	1.81592E-19
4.19019	1.902E-22
4.24266	1.55793E-28
4.29162	1.84027E-34
4.3447	3.95195E-41
4.39425	1.46287E-47
4.44284	4.80423E-54
4.49555	2.89114E-61
4.54473	3.6062E-68
4.59804	7.98535E-76
4.65037	1.66098E-83
4.69919	8.15753E-91
4.75213	6.81789E-99
4.80154	1.4266E-106
4.84998	3.15706E-114
4.90255	1.1316E-122
4.95159	1.11105E-130
5.00476	1.68819E-139
5.0544	7.3489E-148
5.10308	3.56146E-156
5.15587	2.53018E-165
5.20514	5.55261E-174
5.25854	1.70486E-183
5.31096	5.78473E-193
5.35986	6.51403E-202
5.41289	9.69622E-212
5.46239	4.96506E-221
5.51092	3.01932E-230
5.56358	2.29546E-240
5.61271	6.40327E-250
5.64013	2.67135E-255
5.66755	1.03186E-260
  }; 
  %
  \addplot table[x=x,y=dens] { 
  	x  dens 
   3.17067  0.0 
 3.7544	0.
3.77865	0.00262612
3.80496	3.88494
3.83127	3.33129
3.85581	0.562286
3.88036	0.0679812
3.90697	5.72991
3.93358	1.87049
3.95971	0.102028
3.98583	0.0020516
4.01019	0.0000291536
4.03456	2.66072E-7
4.06098	1.09524E-9
4.08741	3.20004E-12
4.11207	1.06004E-14
4.13673	2.81147E-17
4.16346	3.63185E-20
4.19019	3.80403E-23
4.24266	3.11597E-29
4.29162	3.68115E-35
4.3447	7.91134E-42
4.39425	2.9411E-48
4.41855	1.78317E-51
4.44284	9.90489E-55
4.46919	5.99258
4.49555	1.99001
4.52014	0.0987598
4.54473	0.00175948
4.57139	0.0000103963
4.59804	3.4355E-8
4.62421	8.12554E-11
4.65037	1.06463
4.67478	5.57122
4.69919	1.29039
4.72566	0.0595035
4.75213	0.00109218
4.77683	0.0000147495
4.80154	1.31068E-7
4.82576	9.2197E-10
4.84998	4.95025E-12
4.87627	1.31165E-14
4.90255	2.74188E-17
4.92707	7.16911E-20
4.95159	1.5893E-22
5.00476	1.69027E-28
5.0544	2.58384E-34
5.10308	3.26499E-40
5.12947	1.73888E-43
5.15587	8.26661E-47
5.18051	0.536985
5.20514	6.46467
5.23184	0.522336
5.25854	0.00613279
5.28475	0.0000270325
5.31096	5.84736E-8
5.33541	1.18037E-10
5.35986	1.64454E-13
5.38638	9.29669E-17
5.41289	3.86388E-20
5.43764	2.1154E-23
5.46239	9.41486E-27
5.51092	1.48959E-33
5.56358	3.13256E-41
5.61271	1.22536E-48
5.64013	7.26298E-53
5.66755	3.73341E-57
  }; 
 \addplot table[x=x,y=dens] { 
	x  dens 
  	3.1831588899999996  0.03541944954473747 
  	3.2081276699999997  0.04085878094512657 
  	3.2330964499999997  0.04696621728929074 
  	3.2580652299999997  0.053795924416809755 
  	3.2830340099999997  0.061402190795344445 
  	3.3080027899999997  0.06983870844646409 
  	3.3329715699999993  0.07915777246889782 
  	3.3579403499999994  0.08940940549185124 
  	3.3829091299999994  0.10064041591176304 
  	3.4078779099999994  0.11289340132854127 
  	3.4328466899999994  0.12620571112788104 
  	3.4578154699999994  0.1406083845751079 
  	3.4827842499999995  0.15612508300829162 
  	3.5077530299999995  0.1727710366594544 
  	3.5327218099999995  0.19055202820834674 
  	3.5576905899999995  0.20946343630473932 
  	3.5826593699999996  0.22948936291256775 
  	3.6076281499999996  0.2506018683729689 
  	3.632596929999999  0.2727603375091096 
  	3.657565709999999  0.29591099887600086 
  	3.682534489999999  0.3199866173850802 
  	3.7075032699999992  0.344906378018264 
  	3.7324720499999993  0.3705759752234374 
  	3.7574408299999993  0.3968879189073853 
  	3.7824096099999993  0.42372206378931676 
  	3.8073783899999993  0.4509463643419893 
  	3.8323471699999994  0.4784178527391421 
  	3.8573159499999994  0.5059838322710104 
  	3.8822847299999994  0.5334832737177196 
  	3.9072535099999994  0.5607483973205487 
  	3.9322222899999995  0.5876064184039985 
  	3.9571910699999995  0.6138814305089652 
  	3.9821598499999995  0.6393963962274629 
  	4.0071286299999995  0.6639752128922177 
  	4.0320974099999995  0.6874448179666895 
  	4.057066189999999  0.7096372974749225 
  	4.08203497  0.7303919601591247 
  	4.107003749999999  0.7495573402794092 
  	4.131972529999999  0.7669930930760204 
  	4.156941309999999  0.7825717488716967 
  	4.181910089999999  0.7961802945493632 
  	4.206878869999999  0.8077215546231752 
  	4.231847649999999  0.8171153482333857 
  	4.256816429999999  0.8242994030233242 
  	4.281785209999999  0.8292300118761453 
  	4.306753989999999  0.8318824237616231 
  	4.331722769999999  0.8322509653306361 
  	4.356691549999999  0.8303488952567076 
  	4.381660329999999  0.8262079985230896 
  	4.406629109999999  0.8198779327649174 
  	4.431597889999999  0.8114253432805654 
  	4.456566669999999  0.8009327673288784 
  	4.481535449999999  0.7884973517410868 
  	4.506504229999999  0.7742294106420703 
  	4.531473009999999  0.7582508521473438 
  	4.556441789999999  0.7406935042616141 
  	4.581410569999999  0.7216973708500272 
  	4.606379349999999  0.7014088485044783 
  	4.631348129999999  0.6799789344185198 
  	4.656316909999999  0.657561454070704 
  	4.681285689999999  0.6343113356614026 
  	4.706254469999999  0.6103829559284821 
  	4.731223249999999  0.5859285792664803 
  	4.756192029999999  0.5610969090801613 
  	4.7811608099999985  0.5360317671062783 
  	4.806129589999999  0.510870913125332 
  	4.831098369999999  0.48574501414500676 
  	4.8560671499999986  0.46077676884545243 
  	4.881035929999999  0.43608018990974223 
  	4.906004709999999  0.41176004387986215 
  	4.930973489999999  0.3879114454359629 
  	4.955942269999999  0.36461960053522274 
  	4.980911049999999  0.34195969069869997 
  	5.005879829999999  0.319996888922119 
  	5.030848609999999  0.2987864962187401 
  	5.055817389999999  0.27837418668244285 
  	5.080786169999999  0.2587963481776362 
  	5.105754949999999  0.24008050530601197 
  	5.130723729999999  0.22224581114573966 
  	5.155692509999999  0.20530359438043408 
  	5.180661289999998  0.18925794880188443 
  	5.205630069999999  0.17410635274951383 
  	5.230598849999999  0.1598403068050655 
  	5.255567629999998  0.14644597895853026 
  	5.280536409999998  0.13390484746611075 
  	5.305505189999998  0.12219433269931725 
  	5.330473969999998  0.11128841040567204 
  	5.355442749999998  0.10115819993675756 
  	5.380411529999998  0.09177252212392818 
  	5.405380309999998  0.0830984225728834 
  	5.430349089999998  0.07510165718780594 
  	5.455317869999998  0.06774713770837665 
  	5.480286649999998  0.06099933593705869 
  	5.505255429999998  0.05482264614161529 
  	5.530224209999998  0.049181705833250086 
  	5.555192989999998  0.044041675741716725 
  	5.580161769999998  0.03936848033526607 
  	5.605130549999998  0.035129010667174894 
  	5.630099329999998  0.031291291676020475 
  	5.655068109999998  0.02782461632879747 
}; 
  
  %

  \end{axis}
  \end{tikzpicture}
  \begin{tikzpicture} 
  \begin{axis}[ 
  	cycle list name=canti,
  xlabel=$x$,
  ylabel=,
  grid,
  xmin=3.17,
  xmax=5.67,
  width = 6cm,
  height = 4cm,
  ] 
  \addplot table[x=x,y=dens] { 
  	x  dens 
 3.17067	0.038352
3.19518	0.0445746
3.21968	0.0516193
3.24625	0.0602724
3.27282	0.0700772
3.29762	0.0803585
3.32243	0.0918077
3.34675	0.10424
3.37107	0.117936
3.39745	0.134295
3.42383	0.152287
3.47307	0.190434
3.52643	0.238697
3.57882	0.293079
3.62769	0.349729
3.68068	0.416851
3.73015	0.483756
3.77865	0.551926
3.83127	0.62684
3.88036	0.695531
3.93358	0.766086
3.98583	0.828717
4.01019	0.854927
4.03456	0.878885
4.06098	0.902056
4.08741	0.922037
4.11207	0.937601
4.13673	0.950021
4.16346	0.95978
4.19019	0.965574
4.21642	0.967339
4.24266	0.965194
4.26714	0.959692
4.29162	0.950871
4.31816	0.937665
4.3447	0.92083
4.36948	0.902015
4.39425	0.880409
4.41855	0.856732
4.44284	0.830814
4.49555	0.768062
4.54473	0.703391
4.59804	0.629295
4.65037	0.555105
4.69919	0.486738
4.75213	0.415493
4.80154	0.353177
4.84998	0.297032
4.90255	0.242377
4.95159	0.1976
5.00476	0.155865
5.02958	0.138737
5.0544	0.123049
5.07874	0.109012
5.10308	0.0962432
5.12947	0.0837513
5.15587	0.0725858
5.18051	0.0632802
5.20514	0.054973
5.23184	0.0470097
5.25854	0.0400336
5.28475	0.0340551
5.31096	0.0288539
5.33541	0.0246319
5.35986	0.0209548
5.38638	0.0175163
5.41289	0.0145824
5.43764	0.0122434
5.46239	0.010243
5.48666	0.00856993
5.51092	0.00714565
5.53725	0.00584416
5.56358	0.00476052
5.58814	0.00391712
5.61271	0.00321187
5.64013	0.00256287
5.66755	0.00203611
  }; 
  %
  \addplot table[x=x,y=dens] { 
  	x  dens 
3.17067	0.0103501
3.19518	0.012215
3.21968	0.0143703
3.24625	0.0170774
3.27282	0.0202193
3.29762	0.0235935
3.32243	0.027442
3.34675	0.0317242
3.37107	0.036561
3.39745	0.0424959
3.42383	0.0492145
3.44845	0.0562548
3.47307	0.0640987
3.49975	0.0735754
3.52643	0.08414
3.55263	0.0956404
3.57882	0.108324
3.60326	0.121277
3.62769	0.135357
3.68068	0.169941
3.73015	0.207411
3.77865	0.249063
3.83127	0.299608
3.88036	0.351409
3.93358	0.41188
3.98583	0.47455
4.03456	0.534727
4.08741	0.60028
4.13673	0.660067
4.19019	0.721312
4.24266	0.775788
4.26714	0.798708
4.29162	0.819775
4.31816	0.840316
4.3447	0.858259
4.36948	0.872503
4.39425	0.884191
4.41855	0.893064
4.44284	0.899298
4.46919	0.903013
4.49555	0.903519
4.52014	0.901091
4.54473	0.895888
4.57139	0.887171
4.59804	0.875346
4.62421	0.860837
4.65037	0.843605
4.67478	0.825231
4.69919	0.804798
4.72566	0.780518
4.75213	0.754261
4.80154	0.700826
4.84998	0.644263
4.90255	0.580113
4.95159	0.519377
5.00476	0.454348
5.0544	0.395806
5.10308	0.341543
5.15587	0.287111
5.20514	0.241052
5.25854	0.196674
5.31096	0.158801
5.35986	0.128444
5.38638	0.113907
5.41289	0.100653
5.43764	0.0893864
5.46239	0.0791333
5.48666	0.0700119
5.51092	0.0617565
5.53725	0.0537145
5.56358	0.0465553
5.58814	0.0406086
5.61271	0.0353129
5.64013	0.0301034
5.66755	0.0255644
  }; 
  %
  \addplot table[x=x,y=dens] { 
  	x  dens 
3.17067	0.0172313
3.19518	0.0201751
3.21968	0.0235459
3.24625	0.0277385
3.27282	0.0325545
3.29762	0.037675
3.32243	0.0434579
3.34675	0.049829
3.37107	0.0569541
3.39745	0.0656052
3.42383	0.0752906
3.44845	0.0853294
3.47307	0.0963951
3.49975	0.109613
3.52643	0.124172
3.55263	0.139829
3.57882	0.156887
3.62769	0.192581
3.68068	0.237095
3.73015	0.284046
3.77865	0.33484
3.83127	0.394671
3.88036	0.454055
3.93358	0.520992
3.98583	0.587663
4.03456	0.649006
4.08741	0.712613
4.13673	0.767305
4.16346	0.794393
4.19019	0.81934
4.21642	0.841501
4.24266	0.861126
4.26714	0.876968
4.29162	0.890281
4.31816	0.901717
4.3447	0.909912
4.36948	0.914558
4.39425	0.916256
4.41855	0.915043
4.44284	0.910992
4.46919	0.903434
4.49555	0.892665
4.52014	0.879822
4.54473	0.864404
4.57139	0.844952
4.59804	0.822853
4.62421	0.798807
4.65037	0.772672
4.69919	0.719218
4.75213	0.656053
4.80154	0.594145
4.84998	0.532448
4.90255	0.46617
4.95159	0.406431
5.00476	0.345317
5.0544	0.292623
5.10308	0.245667
5.15587	0.200372
5.20514	0.163491
5.23184	0.14565
5.25854	0.129271
5.28475	0.114566
5.31096	0.101168
5.33541	0.0897934
5.35986	0.079448
5.38638	0.0693264
5.41289	0.0602714
5.43764	0.0527126
5.46239	0.0459537
5.48666	0.0400444
5.51092	0.0347873
5.53725	0.0297571
5.56358	0.0253618
5.58814	0.0217764
5.61271	0.0186387
5.64013	0.0156086
5.66755	0.0130196
  }; 
  %
  \addplot table[x=x,y=dens] { 
  	x  dens 
3.17067	0.031784
3.19518	0.0368459
3.21968	0.0425729
3.24625	0.0496058
3.27282	0.0575763
3.29762	0.0659393
3.32243	0.0752615
3.34675	0.085398
3.37107	0.0965848
3.39745	0.109977
3.42383	0.124748
3.44845	0.139834
3.47307	0.156223
3.52643	0.19638
3.57882	0.242115
3.62769	0.290355
3.68068	0.348377
3.73015	0.407253
3.77865	0.468464
3.83127	0.537416
3.88036	0.602556
3.93358	0.671988
3.98583	0.736696
4.03456	0.791878
4.06098	0.819036
4.08741	0.843884
4.11207	0.86476
4.13673	0.883199
4.16346	0.900223
4.19019	0.913986
4.21642	0.924171
4.24266	0.930946
4.26714	0.934124
4.29162	0.934236
4.31816	0.930892
4.3447	0.923983
4.36948	0.914383
4.39425	0.90184
4.41855	0.886806
4.44284	0.869203
4.46919	0.847394
4.49555	0.822991
4.52014	0.798105
4.54473	0.771409
4.59804	0.708451
4.65037	0.641839
4.69919	0.577493
4.75213	0.507434
4.80154	0.443532
4.84998	0.383688
4.90255	0.323111
4.95159	0.271514
5.00476	0.22152
5.0544	0.180639
5.10308	0.145965
5.12947	0.129327
5.15587	0.114149
5.18051	0.101244
5.20514	0.0895009
5.23184	0.078014
5.25854	0.0677363
5.28475	0.0587405
5.31096	0.050748
5.33541	0.0441257
5.35986	0.0382421
5.38638	0.0326246
5.41289	0.0277252
5.43764	0.0237349
5.46239	0.0202508
5.48666	0.0172758
5.51092	0.0146903
5.53725	0.0122761
5.56358	0.0102197
5.58814	0.00858328
5.61271	0.0071851
5.64013	0.00586861
5.66755	0.00477363
  }; 
  %
  \addplot table[x=x,y=dens] { 
  	x  dens 
3.17067	0.00500819
3.19518	0.00595271
3.21968	0.00705429
3.24625	0.00845148
3.27282	0.0100901
3.29762	0.0118682
3.32243	0.0139173
3.34675	0.0162213
3.37107	0.0188516
3.39745	0.0221162
3.42383	0.0258574
3.44845	0.0298254
3.47307	0.0343
3.49975	0.0397761
3.52643	0.0459656
3.55263	0.0527986
3.57882	0.0604437
3.60326	0.0683623
3.62769	0.0770925
3.65418	0.0875342
3.68068	0.0990496
3.70542	0.11082
3.73015	0.123618
3.77865	0.15184
3.83127	0.187347
3.88036	0.225176
3.93358	0.271239
3.98583	0.321285
4.03456	0.371781
4.08741	0.429925
4.13673	0.486398
4.19019	0.548648
4.24266	0.609263
4.29162	0.663818
4.3447	0.719033
4.39425	0.765269
4.41855	0.785601
4.44284	0.804181
4.46919	0.822184
4.49555	0.837782
4.52014	0.850035
4.54473	0.859957
4.57139	0.867983
4.59804	0.873092
4.62421	0.875221
4.65037	0.874468
4.67478	0.871177
4.69919	0.865415
4.72566	0.856432
4.75213	0.844693
4.77683	0.83135
4.80154	0.815821
4.82576	0.798603
4.84998	0.779549
4.87627	0.756971
4.90255	0.732613
4.95159	0.683153
5.00476	0.625099
5.0544	0.568352
5.10308	0.511816
5.15587	0.451006
5.20514	0.395987
5.25854	0.339431
5.31096	0.287921
5.35986	0.244032
5.41289	0.201348
5.46239	0.166235
5.51092	0.136205
5.56358	0.108338
5.58814	0.0969232
5.61271	0.0864617
5.64013	0.0758527
5.66755	0.0663068
  }; 
  %
  \addplot table[x=x,y=dens] { 
  	x  dens 
3.17067	0.0205451
3.19518	0.0239527
3.21968	0.0278325
3.24625	0.0326291
3.27282	0.0381035
3.29762	0.0438869
3.32243	0.0503773
3.34675	0.0574824
3.37107	0.0653775
3.39745	0.0748979
3.42383	0.0854796
3.44845	0.0963697
3.47307	0.10829
3.49975	0.122424
3.52643	0.137871
3.57882	0.17217
3.62769	0.209023
3.68068	0.254263
3.73015	0.301217
3.77865	0.351227
3.83127	0.409177
3.88036	0.465745
3.93358	0.528437
3.98583	0.589782
4.03456	0.645255
4.08741	0.701748
4.13673	0.749398
4.16346	0.772631
4.19019	0.793772
4.21642	0.812303
4.24266	0.828463
4.26714	0.841274
4.29162	0.851796
4.31816	0.860528
4.3447	0.866403
4.36948	0.86927
4.39425	0.869593
4.41855	0.867449
4.44284	0.862898
4.46919	0.855295
4.49555	0.845004
4.52014	0.833074
4.54473	0.81901
4.57139	0.801501
4.59804	0.781808
4.62421	0.760535
4.65037	0.737538
4.69919	0.690733
4.75213	0.635587
4.80154	0.5815
4.84998	0.527396
4.90255	0.468877
4.95159	0.415615
5.00476	0.36043
5.0544	0.312094
5.10308	0.268247
5.15587	0.225045
5.20514	0.189001
5.25854	0.154629
5.31096	0.125498
5.35986	0.102224
5.38638	0.0910824
5.41289	0.0809162
5.43764	0.0722593
5.46239	0.0643632
5.48666	0.0573173
5.51092	0.0509169
5.53725	0.0446536
5.56358	0.039047
5.58814	0.0343617
5.61271	0.0301621
5.64013	0.0259992
5.66755	0.0223401
  }; 
  %
  \addplot table[x=x,y=dens] { 
  	x  dens 
  	3.1831588899999996  0.03541944954473747 
  	3.2081276699999997  0.04085878094512657 
  	3.2330964499999997  0.04696621728929074 
  	3.2580652299999997  0.053795924416809755 
  	3.2830340099999997  0.061402190795344445 
  	3.3080027899999997  0.06983870844646409 
  	3.3329715699999993  0.07915777246889782 
  	3.3579403499999994  0.08940940549185124 
  	3.3829091299999994  0.10064041591176304 
  	3.4078779099999994  0.11289340132854127 
  	3.4328466899999994  0.12620571112788104 
  	3.4578154699999994  0.1406083845751079 
  	3.4827842499999995  0.15612508300829162 
  	3.5077530299999995  0.1727710366594544 
  	3.5327218099999995  0.19055202820834674 
  	3.5576905899999995  0.20946343630473932 
  	3.5826593699999996  0.22948936291256775 
  	3.6076281499999996  0.2506018683729689 
  	3.632596929999999  0.2727603375091096 
  	3.657565709999999  0.29591099887600086 
  	3.682534489999999  0.3199866173850802 
  	3.7075032699999992  0.344906378018264 
  	3.7324720499999993  0.3705759752234374 
  	3.7574408299999993  0.3968879189073853 
  	3.7824096099999993  0.42372206378931676 
  	3.8073783899999993  0.4509463643419893 
  	3.8323471699999994  0.4784178527391421 
  	3.8573159499999994  0.5059838322710104 
  	3.8822847299999994  0.5334832737177196 
  	3.9072535099999994  0.5607483973205487 
  	3.9322222899999995  0.5876064184039985 
  	3.9571910699999995  0.6138814305089652 
  	3.9821598499999995  0.6393963962274629 
  	4.0071286299999995  0.6639752128922177 
  	4.0320974099999995  0.6874448179666895 
  	4.057066189999999  0.7096372974749225 
  	4.08203497  0.7303919601591247 
  	4.107003749999999  0.7495573402794092 
  	4.131972529999999  0.7669930930760204 
  	4.156941309999999  0.7825717488716967 
  	4.181910089999999  0.7961802945493632 
  	4.206878869999999  0.8077215546231752 
  	4.231847649999999  0.8171153482333857 
  	4.256816429999999  0.8242994030233242 
  	4.281785209999999  0.8292300118761453 
  	4.306753989999999  0.8318824237616231 
  	4.331722769999999  0.8322509653306361 
  	4.356691549999999  0.8303488952567076 
  	4.381660329999999  0.8262079985230896 
  	4.406629109999999  0.8198779327649174 
  	4.431597889999999  0.8114253432805654 
  	4.456566669999999  0.8009327673288784 
  	4.481535449999999  0.7884973517410868 
  	4.506504229999999  0.7742294106420703 
  	4.531473009999999  0.7582508521473438 
  	4.556441789999999  0.7406935042616141 
  	4.581410569999999  0.7216973708500272 
  	4.606379349999999  0.7014088485044783 
  	4.631348129999999  0.6799789344185198 
  	4.656316909999999  0.657561454070704 
  	4.681285689999999  0.6343113356614026 
  	4.706254469999999  0.6103829559284821 
  	4.731223249999999  0.5859285792664803 
  	4.756192029999999  0.5610969090801613 
  	4.7811608099999985  0.5360317671062783 
  	4.806129589999999  0.510870913125332 
  	4.831098369999999  0.48574501414500676 
  	4.8560671499999986  0.46077676884545243 
  	4.881035929999999  0.43608018990974223 
  	4.906004709999999  0.41176004387986215 
  	4.930973489999999  0.3879114454359629 
  	4.955942269999999  0.36461960053522274 
  	4.980911049999999  0.34195969069869997 
  	5.005879829999999  0.319996888922119 
  	5.030848609999999  0.2987864962187401 
  	5.055817389999999  0.27837418668244285 
  	5.080786169999999  0.2587963481776362 
  	5.105754949999999  0.24008050530601197 
  	5.130723729999999  0.22224581114573966 
  	5.155692509999999  0.20530359438043408 
  	5.180661289999998  0.18925794880188443 
  	5.205630069999999  0.17410635274951383 
  	5.230598849999999  0.1598403068050655 
  	5.255567629999998  0.14644597895853026 
  	5.280536409999998  0.13390484746611075 
  	5.305505189999998  0.12219433269931725 
  	5.330473969999998  0.11128841040567204 
  	5.355442749999998  0.10115819993675756 
  	5.380411529999998  0.09177252212392818 
  	5.405380309999998  0.0830984225728834 
  	5.430349089999998  0.07510165718780594 
  	5.455317869999998  0.06774713770837665 
  	5.480286649999998  0.06099933593705869 
  	5.505255429999998  0.05482264614161529 
  	5.530224209999998  0.049181705833250086 
  	5.555192989999998  0.044041675741716725 
  	5.580161769999998  0.03936848033526607 
  	5.605130549999998  0.035129010667174894 
  	5.630099329999998  0.031291291676020475 
  	5.655068109999998  0.02782461632879747 
  }; 
  
  %

  \end{axis}
  \end{tikzpicture}
\end{center}
\caption{Five realizations of the density conditional on $\cG_{-k}$ (blue), 
 their average (orange, dotted in the b/w version),
 and  the true density  (thick black)  for $k=1$ (left), $k=2$ (middle), and $k=3$ (right),
 for the cantilever example.}
\label{fig:canti-cde}
\end{figure}

\begin{figure}[hbt]

\begin{center}
\begin{tikzpicture} 
    \begin{axis}[ 
      xlabel=$x$,
       grid,
       xmin=4,
       xmax=4.9,
       line width = 0.8pt,
       width = 0.45\textwidth,
       height = 4cm,
      ] 
      \addplot[red] table[x=var1,y=var2] { 
      var1  var2 
      4.001143100098419  0.6703636500819347 
      4.011866748088571  0.6804949609632656 
      4.020782674140249  0.688717273667733 
      4.034432621766344  0.7009290332971149 
      4.037994669242038  0.7040374761749869 
      4.049800558491264  0.7140975070829467 
      4.05832696782984  0.7211232861175113 
      4.0662000389149435  0.7274253666983769 
      4.073223895693563  0.7328931519626151 
      4.087802670134454  0.7437616686955486 
      4.095179997870103  0.7490071771081409 
      4.102690576842437  0.7541672185745567 
      4.110936967114892  0.7596195111454765 
      4.119163402491013  0.7648323681401168 
      4.131490566841569  0.7722131230985441 
      4.143137628265337  0.7787040427394998 
      4.146690774490517  0.7805897254328148 
      4.153307390474001  0.7839825491191135 
      4.170697825647459  0.7921578872879008 
      4.17229145866647  0.7928529862938432 
      4.186781874047041  0.7987547247321339 
      4.190633786831943  0.8001964802736121 
      4.198724430173328  0.8030509032304257 
      4.2089595097947345  0.8063247127249772 
      4.222109519624333  0.8099799189975636 
      4.226731171819401  0.8111180802208744 
      4.238721261337405  0.8137180185913622 
      4.244855630642495  0.8148525020530913 
      4.257589149531802  0.8167888398948367 
      4.267923660190571  0.8179496786238497 
      4.277997695335541  0.8187321514628476 
      4.28691138626462  0.8191411643476623 
      4.295591570391796  0.8192875014675411 
      4.303537029681192  0.8192066166109325 
      4.31053489964182  0.8189676740198306 
      4.315714750625348  0.8186908954038521 
      4.3243750394389515  0.8180408084647399 
      4.341401122768977  0.8160930886284775 
      4.350799818992932  0.8146467367348893 
      4.355309067943666  0.8138611340750113 
      4.365284716478368  0.8119155894193515 
      4.375060931854107  0.8097364762890763 
      4.378018128341243  0.8090250845275914 
      4.395474556768711  0.8043425535696889 
      4.399522006912758  0.8031410290119929 
      4.409959686634041  0.7998461407420006 
      4.420503400019302  0.7962358864759308 
      4.42899217196842  0.7931279445464079 
      4.433223634014725  0.791512804170599 
      4.445840342199149  0.7864416536208141 
      4.4506922119925685  0.7843912958581724 
      4.466121426341882  0.777509890146276 
      4.4703135052305205  0.7755469582270674 
      4.478433797175351  0.7716334977400797 
      4.490182036473384  0.7657164305631949 
      4.495797542268152  0.7627834238606974 
      4.505527181291639  0.7575441716574347 
      4.519052569603998  0.7499357402083127 
      4.528936769489862  0.7441415720653936 
      4.536691075170353  0.739460467257373 
      4.543849972219158  0.7350347624635564 
      4.5569775892275715  0.7266644157010277 
      4.558865653078085  0.7254338447870251 
      4.575616285207027  0.7142284724358772 
      4.5792911456952705  0.7117020061583043 
      4.591999464171447  0.7027803762455147 
      4.600793225198051  0.6964422577672191 
      4.6052793785783255  0.6931579992818224 
      4.619623644104162  0.6824311705767188 
      4.627332494289885  0.6765275274618358 
      4.635531969600043  0.6701443964792839 
      4.645219386676474  0.6624683572216803 
      4.656297263722828  0.6535169658488034 
      4.664174935653663  0.6470421659707162 
      4.672569040596336  0.6400459822012731 
      4.680698401467471  0.6331779343771873 
      4.689593464171152  0.6255620880750093 
      4.6971081978288725  0.6190486708462936 
      4.706649654971931  0.6106778230401422 
      4.718990720124932  0.5996914885226033 
      4.727471479079004  0.5920430104047382 
      4.73356145891449  0.5865036386040917 
      4.742651413153739  0.5781659638789974 
      4.749804741083501  0.5715490032348989 
      4.761884782342947  0.5602711599697956 
      4.76665536814656  0.555783845279905 
      4.779359155975447  0.5437498701442552 
      4.789228308112707  0.5343239461555755 
      4.799794192485789  0.5241670759545769 
      4.802172699950617  0.5218722274846544 
      4.811208659878094  0.5131289324112438 
      4.827171747945884  0.49760039513287424 
      4.829378818295849  0.49544661711189125 
      4.839161759933375  0.48588459407455664 
      4.853606007338513  0.47173384662348966 
      4.862077439924494  0.4634253653423199 
      4.872463245867602  0.45323928783641176 
      4.873371183995904  0.4523490942293568 
      4.884743987491529  0.44120689212266423 
      4.897833147414796  0.42841364076336114 
 }; 
%
      \addplot[green!80!black,very thick] table[x=var1,y=var2] { 
      var1  var2 
      4.001143100098419  0.658290125452424 
      4.011866748088571  0.6686252667519186 
      4.020782674140249  0.6770583486774475 
      4.034432621766344  0.689669490049286 
      4.037994669242038  0.6928979806175524 
      4.049800558491264  0.7034042768626135 
      4.05832696782984  0.7107995594158015 
      4.0662000389149435  0.7174786374675516 
      4.073223895693563  0.7233120528649404 
      4.087802670134454  0.735028016382853 
      4.095179997870103  0.7407477251142482 
      4.102690576842437  0.7464214722794937 
      4.110936967114892  0.7524731642261916 
      4.119163402491013  0.7583199940945222 
      4.131490566841569  0.7667155912737916 
      4.143137628265337  0.774232949821655 
      4.146690774490517  0.7764440849001764 
      4.153307390474001  0.7804574611377207 
      4.170697825647459  0.7903462265460105 
      4.17229145866647  0.7912037652772944 
      4.186781874047041  0.7986182167070351 
      4.190633786831943  0.8004717266345814 
      4.198724430173328  0.804201934611102 
      4.2089595097947345  0.8086009944228536 
      4.222109519624333  0.8137209375485949 
      4.226731171819401  0.8153766517554896 
      4.238721261337405  0.8193198359047156 
      4.244855630642495  0.8211391757128095 
      4.257589149531802  0.8244840409041102 
      4.267923660190571  0.8267675683933451 
      4.277997695335541  0.8286194690666129 
      4.28691138626462  0.8299489245409357 
      4.295591570391796  0.8309639212613004 
      4.303537029681192  0.8316508257820315 
      4.31053489964182  0.8320640395525734 
      4.315714750625348  0.8322542706467031 
      4.3243750394389515  0.8323528416126509 
      4.341401122768977  0.8317481267860413 
      4.350799818992932  0.8309630596114187 
      4.355309067943666  0.8304731003881529 
      4.365284716478368  0.8291295469734498 
      4.375060931854107  0.8274681270314449 
      4.378018128341243  0.8268987951998646 
      4.395474556768711  0.822912632724674 
      4.399522006912758  0.8218370773249357 
      4.409959686634041  0.8188040654452102 
      4.420503400019302  0.8153652341218911 
      4.42899217196842  0.8123268484008729 
      4.433223634014725  0.8107235539426529 
      4.445840342199149  0.8055979469969523 
      4.4506922119925685  0.8034911069465954 
      4.466121426341882  0.796301163774668 
      4.4703135052305205  0.7942211298166313 
      4.478433797175351  0.7900414330030274 
      4.490182036473384  0.7836497508497718 
      4.495797542268152  0.7804538042250956 
      4.505527181291639  0.7747060696696569 
      4.519052569603998  0.7662852315018794 
      4.528936769489862  0.759825158872799 
      4.536691075170353  0.7545822327445866 
      4.543849972219158  0.7496092447946586 
      4.5569775892275715  0.7401703750870519 
      4.558865653078085  0.7387797680062764 
      4.575616285207027  0.726093644220159 
      4.5792911456952705  0.7232292403506786 
      4.591999464171447  0.7131092437593527 
      4.600793225198051  0.705919487394799 
      4.6052793785783255  0.7021952004322458 
      4.619623644104162  0.690043366069226 
      4.627332494289885  0.6833668619651477 
      4.635531969600043  0.6761600018886019 
      4.645219386676474  0.667512612769769 
      4.656297263722828  0.6574589023935627 
      4.664174935653663  0.6502094248062427 
      4.672569040596336  0.6423993108634871 
      4.680698401467471  0.6347569603097475 
      4.689593464171152  0.6263125398498575 
      4.6971081978288725  0.6191164998340679 
      4.706649654971931  0.6099047088151193 
      4.718990720124932  0.5978784194150433 
      4.727471479079004  0.5895495505487556 
      4.73356145891449  0.5835400430322428 
      4.742651413153739  0.5745308312186233 
      4.749804741083501  0.5674119000540818 
      4.761884782342947  0.5553417056626284 
      4.76665536814656  0.5505612202094837 
      4.779359155975447  0.5378026995327735 
      4.789228308112707  0.5278713968780323 
      4.799794192485789  0.5172301287447093 
      4.802172699950617  0.5148343699456245 
      4.811208659878094  0.5057350324854643 
      4.827171747945884  0.4896829441149675 
      4.829378818295849  0.4874673064969752 
      4.839161759933375  0.4776615131445843 
      4.853606007338513  0.46324010202681515 
      4.862077439924494  0.4548211005388801 
      4.872463245867602  0.44454654812663574 
      4.873371183995904  0.4436510243707192 
      4.884743987491529  0.4324738205564437 
      4.897833147414796  0.41971033572920174 
 }; 
%
      \addplot[black,dashed] table[x=var1,y=var2] { 
      var1  var2 
      4.001143100098419  0.6581729858698636 
      4.011866748088571  0.6685145692328184 
      4.020782674140249  0.6769530607090263 
      4.034432621766344  0.6895724857667656 
      4.037994669242038  0.6928031255105671 
      4.049800558491264  0.7033164707108364 
      4.05832696782984  0.7107167480486927 
      4.0662000389149435  0.7174003473903449 
      4.073223895693563  0.723237712087848 
      4.087802670134454  0.7349615869487907 
      4.095179997870103  0.7406851399152005 
      4.102690576842437  0.7463626850891512 
      4.110936967114892  0.7524184098993871 
      4.119163402491013  0.758269118774046 
      4.131490566841569  0.7666702620583977 
      4.143137628265337  0.7741925770536594 
      4.146690774490517  0.7764051718503233 
      4.153307390474001  0.7804212044006834 
      4.170697825647459  0.7903165984657733 
      4.17229145866647  0.7911747214457963 
      4.186781874047041  0.7985943338017075 
      4.190633786831943  0.8004491750721777 
      4.198724430173328  0.8041821341647922 
      4.2089595097947345  0.8085846038368382 
      4.222109519624333  0.8137088541627071 
      4.226731171819401  0.8153660725188198 
      4.238721261337405  0.8193131614616393 
      4.244855630642495  0.8211345113443774 
      4.257589149531802  0.82448360651737 
      4.267923660190571  0.8267706516555233 
      4.277997695335541  0.8286260792625284 
      4.28691138626462  0.8299587524040153 
      4.295591570391796  0.8309769822232356 
      4.303537029681192  0.8316669407715893 
      4.31053489964182  0.8320829242744167 
      4.315714750625348  0.8322752558382573 
      4.3243750394389515  0.8323774372444793 
      4.341401122768977  0.8317801888481874 
      4.350799818992932  0.8309994521799545 
      4.355309067943666  0.8305116220249159 
      4.365284716478368  0.8291728922935269 
      4.375060931854107  0.8275163421874916 
      4.378018128341243  0.8269485091446513 
      4.395474556768711  0.8229714039007876 
      4.399522006912758  0.8218979919306247 
      4.409959686634041  0.8188705626017857 
      4.420503400019302  0.815437424154107 
      4.42899217196842  0.812403635650527 
      4.433223634014725  0.810802630242128 
      4.445840342199149  0.8056838052875651 
      4.4506922119925685  0.8035795442959225 
      4.466121426341882  0.7963976263377579 
      4.4703135052305205  0.7943197124541765 
      4.478433797175351  0.7901440292846268 
      4.490182036473384  0.7837578962822281 
      4.495797542268152  0.7805644765169657 
      4.505527181291639  0.7748208988641474 
      4.519052569603998  0.7664053095724133 
      4.528936769489862  0.7599486311051695 
      4.536691075170353  0.7547080792162657 
      4.543849972219158  0.7497370419673519 
      4.5569775892275715  0.7403011059635862 
      4.558865653078085  0.7389108490851862 
      4.575616285207027  0.7262269972182641 
      4.5792911456952705  0.7233628844327824 
      4.591999464171447  0.7132432966910055 
      4.600793225198051  0.706053268669455 
      4.6052793785783255  0.7023286655395696 
      4.619623644104162  0.690175007826889 
      4.627332494289885  0.683497011587055 
      4.635531969600043  0.6762881736372458 
      4.645219386676474  0.6676379351018183 
      4.656297263722828  0.6575803009237564 
      4.664174935653663  0.6503276142382082 
      4.672569040596336  0.6425137121028062 
      4.680698401467471  0.6348673451717131 
      4.689593464171152  0.6264181580961288 
      4.6971081978288725  0.6192178057853742 
      4.706649654971931  0.6100001896969014 
      4.718990720124932  0.5979658407187227 
      4.727471479079004  0.5896311296686123 
      4.73356145891449  0.5836172930811595 
      4.742651413153739  0.5746014383082061 
      4.749804741083501  0.5674771486475602 
      4.761884782342947  0.5553977003882818 
      4.76665536814656  0.5506135064846212 
      4.779359155975447  0.5378450193024616 
      4.789228308112707  0.5279059362263515 
      4.799794192485789  0.5172563641220577 
      4.802172699950617  0.5148587456774595 
      4.811208659878094  0.5057523947825256 
      4.827171747945884  0.4896882029966557 
      4.829378818295849  0.4874709292200483 
      4.839161759933375  0.47765801662037827 
      4.853606007338513  0.463226554170017 
      4.862077439924494  0.45480195290167275 
      4.872463245867602  0.4445208699452721 
      4.873371183995904  0.4436247938356383 
      4.884743987491529  0.4324409383749107 
      4.897833147414796  0.41967044223427175 
 }; 
	\end{axis}
	\end{tikzpicture}
\begin{tikzpicture} 
    \begin{axis}[ 
      xlabel=$x$,
       grid,
       xmin=4.5,
       xmax=4.7,
       line width = 0.8pt,
       width = 0.45\textwidth,
       height = 4cm,
      ] 
      \addplot[red] table[x=var1,y=var2] { 
      var1  var2 
      4.500254022244093  0.762644302395525 
      4.502637055130793  0.7602614932766419 
      4.5046183720311666  0.7588769127043203 
      4.507651693725855  0.7578804508926208 
      4.508443259831564  0.7578391828310377 
      4.511066790775836  0.7582809893965438 
      4.512961548406631  0.7590806216575867 
      4.514711119758876  0.7600910452247484 
      4.516271976820792  0.7611507764777955 
      4.519511704474323  0.7635775455437936 
      4.521151110637801  0.764804219552759 
      4.522820128187208  0.7659666690035182 
      4.524652659358865  0.7670707144810064 
      4.526480756109114  0.7679132365330484 
      4.529220125964793  0.7685116322239226 
      4.531808361836742  0.7681546958723668 
      4.532597949886782  0.7678420171991055 
      4.534068308994223  0.7669794476099949 
      4.53793285014388  0.7628842431851534 
      4.538286990814771  0.7623733244558089 
      4.541507083121565  0.7567872649898958 
      4.5423630637404315  0.7550544761107283 
      4.544160984482962  0.7511744397915386 
      4.546435446621052  0.7460111718754827 
      4.54935767102763  0.7394619660695736 
      4.550384704848756  0.7372891328885548 
      4.55304916918609  0.7321638227384252 
      4.554412362364999  0.7298777670254831 
      4.557242033229289  0.7259243899167166 
      4.55953859115346  0.7235078006881998 
      4.56177726563012  0.7218033067036332 
      4.563758085836582  0.7208036177611067 
      4.565687015642621  0.720257295189709 
      4.567452673262487  0.7200866562036695 
      4.56900775547596  0.7201563702474647 
      4.570158833472299  0.7203245595044829 
      4.572083342097545  0.7207879665317073 
      4.575866916170884  0.722196574613233 
      4.577955515331762  0.7230886734174539 
      4.578957570654148  0.7234896190681973 
      4.581174381439637  0.7241739164219684 
      4.5833468737453575  0.7243724137348936 
      4.584004028520276  0.7243047398009023 
      4.587883234837491  0.7223408108609038 
      4.588782668202835  0.7214683031225145 
      4.591102152585343  0.7185303202035865 
      4.59344520000429  0.7147391506051362 
      4.59533159377076  0.7112934403032882 
      4.596271918669939  0.709501472802819 
      4.599075631599811  0.7041149553474254 
      4.600153824887237  0.7020964487284035 
      4.603582539187085  0.6961780284190382 
      4.604514112273449  0.6947359801195165 
      4.606318621594522  0.6921641766983846 
      4.60892934143853  0.6889217164537035 
      4.610177231615145  0.6875369602751935 
      4.612339373620364  0.6853256412253516 
      4.615345015467555  0.6824731951044373 
      4.61754150433108  0.6803757210730537 
      4.619264683371189  0.6786274237677177 
      4.620855549382036  0.6768993344044002 
      4.623772797606127  0.6734539107301686 
      4.624192367350686  0.6729323249945592 
      4.627914730046006  0.6681908350141088 
      4.628731365710061  0.6671672323115917 
      4.631555436482544  0.6638967105483142 
      4.633509605599567  0.6620564418578316 
      4.634506528572961  0.6613062472808782 
      4.637694143134258  0.6599952601813432 
      4.6394072209533075  0.6600495444313357 
      4.641229326577787  0.6607219692678051 
      4.643382085928105  0.662337757028448 
      4.64584383638285  0.6652190010636434 
      4.647594430145259  0.6678616157797868 
      4.649459786799186  0.6711102980625472 
      4.651266311437216  0.6745662988802791 
      4.653242992038034  0.6785276576398321 
      4.654912932850861  0.6818789805705208 
      4.657033256660429  0.685921544139756 
      4.659775715583319  0.6903297868411311 
      4.661660328684223  0.692479995358749 
      4.6630136575365535  0.6934288058948883 
      4.665033647367498  0.6937485081678137 
      4.666623275796334  0.6929788608968572 
      4.669307729409544  0.6895170748122504 
      4.670367859588125  0.6874083788508429 
      4.6731909235500995  0.6799137583137507 
      4.675384068469491  0.6724920501913231 
      4.67773204277462  0.6634895913246412 
      4.678260599989026  0.6613721090845364 
      4.680268591084021  0.6532240633636179 
      4.683815943987974  0.6392727063734241 
      4.684306404065744  0.6374611471760663 
      4.686480391096306  0.6299555954683351 
      4.689690223853003  0.6208754529814194 
      4.691572764427666  0.6168923721059394 
      4.693880721303912  0.6134714633903406 
      4.694082485332423  0.6132475013952636 
      4.696609774998118  0.6113550539626682 
      4.699518477203288  0.610847756868902 
 }; 
%
      \addplot[green!80!black,very thick] table[x=var1,y=var2] { 
      var1  var2 
      4.500254022244093  0.7752545138786381 
      4.502637055130793  0.7741129403976861 
      4.5046183720311666  0.773145514473238 
      4.507651693725855  0.7715692812911283 
      4.508443259831564  0.7711360031673657 
      4.511066790775836  0.7696436787792355 
      4.512961548406631  0.7685336823306672 
      4.514711119758876  0.7674977956219837 
      4.516271976820792  0.7665804356679392 
      4.519511704474323  0.7647433083997253 
      4.521151110637801  0.7638682290993394 
      4.522820128187208  0.7630210797991562 
      4.524652659358865  0.7621371083626836 
      4.526480756109114  0.7613070837005078 
      4.529220125964793  0.7601352956766785 
      4.531808361836742  0.7590674327657113 
      4.532597949886782  0.7587416844198961 
      4.534068308994223  0.7581172193956768 
      4.53793285014388  0.7562794183021322 
      4.538286990814771  0.7560907341250098 
      4.541507083121565  0.754187190666676 
      4.5423630637404315  0.753621399838045 
      4.544160984482962  0.7523589238330247 
      4.546435446621052  0.7506389151226632 
      4.54935767102763  0.7482822635298715 
      4.550384704848756  0.7474323969862285 
      4.55304916918609  0.7452144325696738 
      4.554412362364999  0.7440759956590481 
      4.557242033229289  0.7417265856646469 
      4.55953859115346  0.739826765405591 
      4.56177726563012  0.7379859146350941 
      4.563758085836582  0.7363656669259189 
      4.565687015642621  0.7347930390893392 
      4.567452673262487  0.7333588897396222 
      4.56900775547596  0.7321002691605032 
      4.570158833472299  0.7311675711195638 
      4.572083342097545  0.7295983078261035 
      4.575866916170884  0.7264833243392713 
      4.577955515331762  0.7247743138416007 
      4.578957570654148  0.7239622515942267 
      4.581174381439637  0.7221781857512601 
      4.5833468737453575  0.7204472079913957 
      4.584004028520276  0.7199272722561113 
      4.587883234837491  0.7168504298232811 
      4.588782668202835  0.7161249970727941 
      4.591102152585343  0.7142236170740232 
      4.59344520000429  0.7122414962832923 
      4.59533159377076  0.7105838144633393 
      4.596271918669939  0.7097328857240425 
      4.599075631599811  0.7071055511215953 
      4.600153824887237  0.7060645292125263 
      4.603582539187085  0.7026987528799837 
      4.604514112273449  0.701791983247406 
      4.606318621594522  0.7000688554069932 
      4.60892934143853  0.6976802011824595 
      4.610177231615145  0.696587558775449 
      4.612339373620364  0.6947754085437127 
      4.615345015467555  0.6923964324770701 
      4.61754150433108  0.6907155064886606 
      4.619264683371189  0.6893914497142803 
      4.620855549382036  0.6881301985664504 
      4.623772797606127  0.685637445872628 
      4.624192367350686  0.6852537322390027 
      4.627914730046006  0.6815448627652777 
      4.628731365710061  0.6806694507706288 
      4.631555436482544  0.6775492781709521 
      4.633509605599567  0.6753637276469728 
      4.634506528572961  0.6742620192504891 
      4.637694143134258  0.6708997280080368 
      4.6394072209533075  0.6692273349488196 
      4.641229326577787  0.6675654451600731 
      4.643382085928105  0.6657664912715292 
      4.64584383638285  0.6638894690765955 
      4.647594430145259  0.6626394883477021 
      4.649459786799186  0.6613580972995335 
      4.651266311437216  0.6601309012246466 
      4.653242992038034  0.6587495168384736 
      4.654912932850861  0.6575169136556303 
      4.657033256660429  0.6558407699429305 
      4.659775715583319  0.6534951863350954 
      4.661660328684223  0.6518024070013541 
      4.6630136575365535  0.6505851061719754 
      4.665033647367498  0.6488220348768116 
      4.666623275796334  0.6475172166594275 
      4.669307729409544  0.6455077011738884 
      4.670367859588125  0.6447723364388559 
      4.6731909235500995  0.642920851117479 
      4.675384068469491  0.641533962187193 
      4.67773204277462  0.6400368290775353 
      4.678260599989026  0.6396929410709903 
      4.680268591084021  0.6383467590676443 
      4.683815943987974  0.6357195097080941 
      4.684306404065744  0.6353225453000421 
      4.686480391096306  0.6334414352134439 
      4.689690223853003  0.6302533604113894 
      4.691572764427666  0.6281473833275066 
      4.693880721303912  0.6253754351864877 
      4.694082485332423  0.6251262681880957 
      4.696609774998118  0.6219743477658127 
      4.699518477203288  0.6184412314751052 
 }; 
%
      \addplot[dashed,black] table[x=var1,y=var2] { 
      var1  var2 
      4.500254022244093  0.7780465040384773 
      4.502637055130793  0.7765937726933926 
      4.5046183720311666  0.7753789073871894 
      4.507651693725855  0.773510304101935 
      4.508443259831564  0.7730211013456731 
      4.511066790775836  0.7713959789021372 
      4.512961548406631  0.7702181966449377 
      4.514711119758876  0.7691265937253691 
      4.516271976820792  0.7681464480844841 
      4.519511704474323  0.7660873629481774 
      4.521151110637801  0.7650287099451398 
      4.522820128187208  0.7639411886438447 
      4.524652659358865  0.7627359975975573 
      4.526480756109114  0.7615240982436375 
      4.529220125964793  0.7596965348273748 
      4.531808361836742  0.7579601836585597 
      4.532597949886782  0.7574293475680396 
      4.534068308994223  0.7564396272337076 
      4.53793285014388  0.7538278392675561 
      4.538286990814771  0.7535877617187521 
      4.541507083121565  0.751393205329637 
      4.5423630637404315  0.7508057321038172 
      4.544160984482962  0.7495652960508655 
      4.546435446621052  0.7479794806034359 
      4.54935767102763  0.7459098456016321 
      4.550384704848756  0.7451731069130516 
      4.55304916918609  0.743239202558005 
      4.554412362364999  0.7422387290658721 
      4.557242033229289  0.7401416856065363 
      4.55953859115346  0.738422791839624 
      4.56177726563012  0.7367335523847505 
      4.563758085836582  0.7352316245756558 
      4.565687015642621  0.7337644964811789 
      4.567452673262487  0.7324220049646961 
      4.56900775547596  0.7312397658350431 
      4.570158833472299  0.7303655102040739 
      4.572083342097545  0.7289047910648283 
      4.575866916170884  0.7260258054815253 
      4.577955515331762  0.7244220259120263 
      4.578957570654148  0.7236472135863178 
      4.581174381439637  0.7219210031385751 
      4.5833468737453575  0.720215417544533 
      4.584004028520276  0.7196975176708614 
      4.587883234837491  0.7166204397902597 
      4.588782668202835  0.7159020621169611 
      4.591102152585343  0.7140377297123202 
      4.59344520000429  0.7121303713360317 
      4.59533159377076  0.710576277085466 
      4.596271918669939  0.7097959635928448 
      4.599075631599811  0.7074539009291433 
      4.600153824887237  0.7065493863361928 
      4.603582539187085  0.7036693673223541 
      4.604514112273449  0.702887940061097 
      4.606318621594522  0.7013755344365451 
      4.60892934143853  0.6991912168755212 
      4.610177231615145  0.6981464629026853 
      4.612339373620364  0.6963310388706967 
      4.615345015467555  0.6937915203313091 
      4.61754150433108  0.6919234114262178 
      4.619264683371189  0.6904506439929261 
      4.620855549382036  0.689086278744159 
      4.623772797606127  0.6865783822109964 
      4.624192367350686  0.6862170703727686 
      4.627914730046006  0.6830076786281489 
      4.628731365710061  0.6823017948690623 
      4.631555436482544  0.6798570325085739 
      4.633509605599567  0.6781609619329783 
      4.634506528572961  0.6772935035072906 
      4.637694143134258  0.6745064114587842 
      4.6394072209533075  0.6729969977913435 
      4.641229326577787  0.6713790131278597 
      4.643382085928105  0.669447411002888 
      4.64584383638285  0.667210128159921 
      4.647594430145259  0.6656021157547982 
      4.649459786799186  0.6638758681367684 
      4.651266311437216  0.6621945290283252 
      4.653242992038034  0.6603497047467571 
      4.654912932850861  0.6587897946254004 
      4.657033256660429  0.6568137507857088 
      4.659775715583319  0.6542668603461974 
      4.661660328684223  0.6525219464636577 
      4.6630136575365535  0.651271609370125 
      4.665033647367498  0.6494112351579387 
      4.666623275796334  0.6479515454895631 
      4.669307729409544  0.6454952933985904 
      4.670367859588125  0.6445254115700122 
      4.6731909235500995  0.6419327473422648 
      4.675384068469491  0.6399014652251687 
      4.67773204277462  0.6377061909877298 
      4.678260599989026  0.6372089266579156 
      4.680268591084021  0.6353093850259015 
      4.683815943987974  0.6319212596446576 
      4.684306404065744  0.6314502523609202 
      4.686480391096306  0.6293575288671432 
      4.689690223853003  0.6262632070854004 
      4.691572764427666  0.6244534434893595 
      4.693880721303912  0.622244630196098 
      4.694082485332423  0.6220521489218517 
      4.696609774998118  0.6196427800558999 
      4.699518477203288  0.6168759716955438 
 }; 
	\end{axis}
	\end{tikzpicture}
	\end{center}
	\caption{The CDE under MC (red), under RQMC  (green, very close to the dashed line) 
	 and the true density (black, dashed) 
	 for $\cG_{-1}$ with $n=2^{10}$ (left) and for $\cG_{-2}$ with $n=2^{16}$ (right),
	 for the cantilever example.}
	\label{fig:canti-MCvsRQMC}
\end{figure}
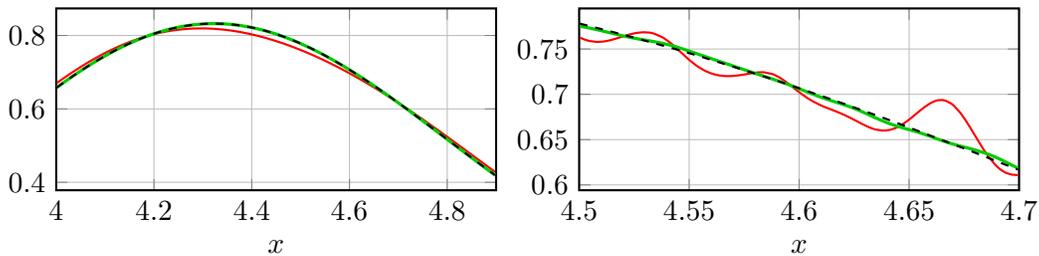

\subsection{A stochastic activity network}
\label{sec:san}

In this example, the conditioning for the CDE must hide more than one random variable.
We consider an acyclic directed graph $G = (\cN, \cA)$ where $\cN$ is a finite set of nodes
and $\cA = \{a_j = (\alpha_j, \beta_j),\, j=1,\dots,d\}$ a finite set of arcs (directed links) 
where $a_j$ goes from $\alpha_j$ to $\beta_j$.  There is a source node having only outcoming arcs,
a sink node having only incoming arcs, and each arc belongs to at least one path going
from the source to the sink.  There can be at most one arc for each pair $(\alpha_j, \beta_j)$
(no parallel arcs).  Each arc $j$ has random length $Y_j$.
These $Y_j$ are assumed independent with continuous cdf's $F_j$, density $f_j$, 
and can be generated by inversion: $Y_j = F_j^{-1}(U_j)$ where $U_j\sim U(0,1)$.
The length of the longest path from the source to the sink is a random variable $X$
and the goal is to estimate the density of $X$.

This general model has several applications.  
The arcs $a_j$ may represent activities having random durations and the graph represents
precedence relationships between all activities of a project.  
Activity $a_j$ cannot start before all activities $j'$ with $\beta_{j'} = \alpha_j$ are completed.  
Then $X$ represents the duration of the project if all activities are started as soon as allowed.
This type of \emph{stochastic activity network} (SAN) is widely used in project management
for all types of projects (e.g., construction, software, etc.), communication, transportation, etc.
For example, the graph may represent a large railway network in which each activity corresponds 
to a train stopping at a station, or a train covering a given segment of its route, 
or a minimal spacing between trains, etc.
Precedence relationships are needed because railways are shared, there are ordering and 
distancing rules between trains, passengers have connections between trains, trains are merged
or split at certain points, etc.  The travel time of one passenger in this network turns out to 
be the length $X$ of the longest path in a subnetwork whose source and sink are the origin and 
destination of this passenger. 

For our numerical experiments, we use a small example from \cite{vAVR96a,vAVR98a}, 
who showed how to use CMC to estimate $\EE[X]$ and some quantiles of the distribution of $X$.
\cite{vLEC00b} and \cite{vLEC12a} used this same example to test the combination
of CMC with RQMC to estimate $\EE[X]$.
The network is depicted in Fig.~\ref{fig:san13} and the cdf's $F_j$ are given in \cite{vAVR96a}.  
Much larger networks can be handled in the same way.
We will estimate the density of $X$ over $[a,b] = [22,\, 106.24]$, which covers 
about 95\% of the density.

\begin{figure}[hbt]
\begin{center}
\begin{tikzpicture}[node distance=1.8cm,>=stealth',auto,
    cuta/.style={color=blue!60,dashed},
    cutb/.style={color=red!50!yellow,dashed}
]
\tikzstyle{node}=[circle,thick,draw=black,fill=yellow!20,minimum size=6mm]
\tikzstyle{nodeb}=[circle,thick,draw=black,fill=blue!10,minimum size=6mm]
    \node [nodeb] (n0) [label=left:{\small\color{blue} source}]  {0};
    \node [node] (n1) [right of=n0] {1}
          edge [pre,thick] node[below] {$Y_1$} (n0);
    \node [node] (n2) [above of=n1] {2}
          edge [pre,thick] node[above left] {$Y_2$} (n0)
          edge [pre,thick] node[left] {$Y_3$} (n1);
    \node [node] (n3) [right of=n1] {3}
          edge [pre,thick] node[below] {$Y_4$} (n1);
    \node [node] (n4) [above of=n3] {4}
          edge [pre,thick] node[right] {$Y_8$} (n3);
    \node [node] (n5) [above of=n4] {5}
          edge [pre,cuta] node[right] {$Y_{10}$} (n4)
          edge [pre,cuta] node[below right] {$Y_5$} (n1)
          edge [pre,cuta] node[above left] {$Y_6$} (n2);
    \node [node] (n6) [right of=n3] {6}
          edge [pre,cuta] node[below] {$Y_7$} (n3);
    \node [node] (n7) [above of=n6] {7}
          edge [pre,thick] node[right] {$Y_{12}$} (n6)
          edge [pre,cuta] node[below] {$Y_9$} (n4);
    \node [nodeb] (n8) [above of=n7, label=right:{\small\color{blue} sink}] {8}
          edge [pre,thick] node[right] {$Y_{13}$} (n7)
          edge [pre,thick] node[below] {$Y_{11}$} (n5);
\end{tikzpicture}
\end{center}
\caption{A stochastic activity network, with the cut $\cL = \{ 5, 6, 7, 9, 10 \}$ shown in dashed light blue}
\label{fig:san13}
\end{figure}

Here, $X$ is defined as the maximum length over several paths, and if we hide only a single 
random variable $Y_j$ to implement the CDE, we run into the same problem as in 
Example~\ref{ex:minmax}: Assumption~\ref{ass:cmc-dct} does not hold, because $F(\cdot\mid \cG)$
has a jump.  This means that we must hide more information (condition on less).
Following \cite{vAVR96a,vAVR98a}, we select a 
\emph{uniformly directed cut} $\cL$, which is a set of activities such that each path from 
the source to the sink contains exactly one activity from $\cL$, and let $\cG$ represent 
$\{Y_j,\, j \not\in {\cL}\}$.   
In Figure~\ref{fig:san13}, $\{1, 2\}$, $\{11, 13\}$, $\{ 5, 6, 7, 9, 10 \}$, and
$\{ 2, 3, 5, 8, 9, 13 \}$, are all valid choices of $\cL$.
The corresponding conditional cdf is 
\begin{equation}
\label{eq:cmc-estimator}
  F(x\mid\cG) = \PP\left[X \le x \mid \{Y_j : j\not\in {\cL}\}\right] 
              = \prod_{j\in\cL} \PP[Y_j \le x - P_j]
							= \prod_{j\in\cL} F_j(x - P_j)
\end{equation}
where $P_j$ is the length of the longest path that goes through arc $j$ 
when we exclude $Y_j$ from that length.  The conditional density is 
\[
  f(x\mid \cG) = \frac{\d}{\d x} F(x\mid \cG)  
	 = \sum_{j\in\cL} f_j(x - P_j)  \prod_{l\in\cL,\, l\not=j} F_l(x - P_j).
\]
Under this conditioning, if the $Y_j$'s are continuous variables with bounded variance,
Assumption~\ref{ass:cmc-dct} holds, 
so $f(x\mid \cG)$ is an unbiased density estimator with uniformly bounded variance.

For our numerical experiments, we use the same cut $\cL = \{ 5, 6, 7, 9, 10 \}$
as \cite{vAVR96a}, indicated in light blue in Figure~\ref{fig:san13},
even though there are other cuts with six links, 
which could possibly perform better because they hide more links.
We could also compute the CDE with several choices of $\cL$ and then take a convex combination.
\new{This approach scales nicely and works in exactly the same way for very large networks,
with thousands of links. A simple adaptation also works for a stochastic max-flow problem, in which we
want the density of the capacity of the minimal cut having the smallest capacity \citep{vLEC20m}.}

The GLRDE method 
described in Section~\ref{sec:GLRDE-estimator} does not work for this example.
Indeed, with $X = h(\bY)$ defined as the length of the longest path, for any $j$, 
the derivative $h_j(\bY)$ is zero whenever arc $j$ is not on the longest path,
so we would need to select an arc $j$ that is guaranteed to be on the longest path.
But there is no such arc in general.
We could perhaps apply a modified GLRDE that selects a cut instead of a single coordinate $Y_j$,
but this is beyond the scope of this paper.

Table~\ref{tab:san-results} and Figure \ref{fig:san-plots} summarize our results.
\hflorian{Here, we used a multilevel rank-1-lattice obtained with
  the $\mathcal{P}_2$ criterion and $\gamma_\frakv = 0.3^{|\frakv|}$.}%
We see that for $n = 2^{19}$, the CDE outperforms the KDE by a factor of about 20 with MC, 
and by a factor of about $2^8 \approx 250$ with RQMC.
\hpierre{Again, the lattice with $\log_2 n = 17$ looks weaker than the other ones.}
\hflorian{Checked with larger and smaller lattice parameters. All results were even more erratic.}

\begin{table}[!htbp]	
	\caption{Values of $\hat\nu$ and e19 for the SAN example. }
	\label{tab:san-results}
	\centering
	\small
	\begin{tabular}{c|l| c c }
	  \hline
		&& $\hat\nu$	& e19 \\
		\hline
		\multirow{4}{*}{CDE}
		& MC 		    & 0.96		& 25.6\\
		& Lat+s  	  & 1.31		& 30.9\\
		& Lat+s+b  	& 1.17		& 29.6\\
		& Sob+LMS 	& 1.27		& 29.9\\
		\hline
	\multirow{3}{*}{KDE}
	  & MC 			  & 0.78		& 20.9\\
		& Lat+s   	& 0.95		& 22.7\\
		& Lat+s+b   & 0.93    & 22.0 \\
		& Sob+LMS 	& 0.74 		& 21.9\\
		\hline
	\end{tabular}
\end{table}

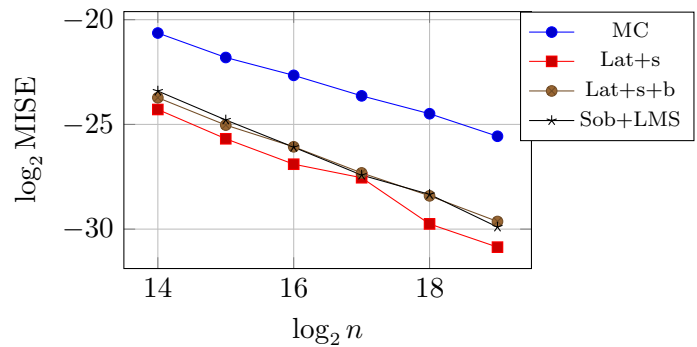
\begin{figure}[!htbp]
	\centering
	\begin{tikzpicture} 
	\begin{axis}[ 
	xlabel=$\log_2n$,
	ylabel=$\log_2\MISE$,
	grid,
	width = 7cm,
	height =5cm,
	cycle list name=defaultcolorlist,
	legend style={at={(1.4,1)}, anchor={north east}, font=\scriptsize},
	] 
	
	\addplot table[x=logN,y=logIV] { 	
		logN  logIV 
	14.0	 -20.635818
	15.0	 -21.804929
	16.0	 -22.657728
	17.0	 -23.636585
	18.0	 -24.489658
	19.0	 -25.565705
	}; 
	\addlegendentry{MC}

	\addplot table[x=logN,y=logIV] { 
		logN  logIV 
      14.0  -24.28725447828006 
      15.0  -25.686405090832118 
      16.0  -26.90008452939896 
      17.0  -27.54718029931705 
      18.0  -29.753949936558215 
      19.0  -30.859588503361 
		
	}; 
	\addlegendentry{Lat+s}
	\addplot table[x=logN,y=logIV] { 
		logN  logIV 
      14.0  -23.723448489062058 
      15.0  -25.032503023189808 
      16.0  -26.06927861033965 
      17.0  -27.315729287045095 
      18.0  -28.410876554460962 
      19.0  -29.63534519539921 
		
	}; 
	\addlegendentry{Lat+s+b}
		\addplot table[x=logN,y=logIV] { 
		logN  logIV 
		
	14.0	 -23.407210
	15.0	 -24.787950
	16.0	 -26.087181
	17.0	 -27.420209
	18.0	 -28.342173
	19.0	 -29.899237
		
	}; 
	\addlegendentry{Sob+LMS}
	%
	%
	%
	\end{axis}
	\end{tikzpicture}
\caption{MISE vs $n$ in log-log scale, for the SAN example.}
	\label{fig:san-plots}
\end{figure}

\subsection{Density of waiting times in a single queue}
\label{sec:single-queue}

We adapted this example from \cite{oPEN20a}, mainly to compare our approach with
the GLRDE proposed in their paper.   What we use here is not ordinary CMC,
but extended CMC, in which we condition on different information for each customer.
This type of strategy would work for much larger queueing systems and many other 
types of systems that involve random delays.
To estimate the density of the waiting times in a queueing system, the general idea is  
to hide sufficient information, for each customer in the system, so that its exact 
waiting time is unknown, \new{but it has a density conditional on the known information,
and we can compute this density easily. This is often easy to do even for large queueing systems.}
The hidden information can be the arrival time of the customer, the departure time of the 
previous customer, or something similar, selected so that we can compute the conditional density.

\subsubsection{Model with independent days.}

We consider a single-server FIFO queue in which customers arrive from an arbitrary 
arrival process (not necessarily stationary Poisson)
and the service times are independent, with continuous cdf $G$ and density $g$.
If $W$ denotes the waiting time of a ``random'' customer,
we want to estimate $p_0 = \PP[W=0]$ and the density $f$ of $W$ over $(0,\infty)$.  

We first consider a system that starts empty and evolves over a fixed time horizon $\tau$, 
which we call a \emph{day}.
Let $T_j$ be the arrival time of the $j$th customer, $T_0=0$, 
$A_{j} = T_{j} - T_{j-1}$ the $j$th interarrival time, 
$S_j$ the service time of customer $j$,
and $W_j$ the waiting time of customer $j$.
Since the system starts empty, we have $W_1=0$, and 
the Lindley recurrence gives us that $W_j = \max(0,\, W_{j-1} + S_{j-1} - A_j)$ for $j\ge 2$.
At time $\tau$, the arrival process stops, but service continues until all customers 
already arrived are served.   The number of customers handled in a day is 
the random variable $N = \max\{ j \ge 1 : T_j < \tau\}$.
The cdf of $W$ can be written  as $F(0) = p_0$ and for $x > 0$, 
$F(x) = \PP[W \le x]  = \EE[\II(W \le x)]$.
The sequence of waiting times of all customers over an infinite number
of independent successive days is a regenerative process that regenerates at the 
beginning of each day, so we can apply the renewal reward theorem, which gives
\begin{equation}                                  \label{eq:queue-renewal-reward}
 F(x) = \EE[\II(W \le x)] = \frac{\EE\left[\II[W_1\le x] + \cdots + \II[W_N \le x]\right]}{\EE[N]}.
\end{equation}
Since $\EE[N]$ does not depend on $x$, we see that for $x > 0$, 
the density $f(x)$ is the derivative of the numerator with respect to $x$, divided by $\EE[N]$.

To obtain a differentiable cdf estimator, we want to replace each indicator in the 
numerator by a conditional expectation.  One simple way of doing this is to hide the service
time $S_{j-1}$ of the previous customer; that is, replace $\II[W_j\le x]$ by 
\[
  P_j(x) = \PP[W_j\le x \mid W_{j-1} - A_j] = \PP[S_{j-1} \le x + A_j - W_{j-1}] 
	       = G(x + A_j - W_{j-1})   \quad \mbox{ for } x\ge 0.
\]
This gives $P_j(0) = G(A_j - W_{j-1})$ (there is a probability mass at 0), whereas 
for $x > 0$, we have $P'_j(x) = \d P_j(x)/\d x = g(x + A_j - W_{j-1})$ and then,
since $N$ does not change when we change $x$,
\begin{equation}           \label{eq:queue-cde0}
 f(x) = \frac{\EE[D(x)]}{\EE[N]} \qquad\mbox{where } 
        D(x) = \sum_{j=1}^N g(x + A_j - W_{j-1}).
\end{equation}
Note that we are not conditioning on the same information for all terms of the sum,
so what we do is not exactly CMC, but \emph{extended CMC} \citep{sBRA87a}.
It nevertheless provides the required smoothing and an unbiased density estimator
for the numerator of (\ref{eq:queue-renewal-reward}).
\new{In a multiserver queue, such as a call center with a large number of agents, one possibility
would be to hide the arrival time $A_j$ of the call, and compute the density of its waiting 
time conditional on the other information.}

Often, for example if the arrival process is Poisson, $\EE[N]$ can be computed exactly,
in which case we only need to estimate $\EE[D(x)]$ and we get an unbiased density estimator.
Otherwise, the denominator $\EE[N]$ can be estimated in the usual way,
and we are then in the standard setting of estimating a ratio of expectations \citep{sASM07a},
for which we have unbiased estimators for the numerator and the denominator. 
We simulate $n$ days, independently (with MC) or with $n$ RQMC points,
to obtain $n$ realizations of $(N, D(x))$, say $(N_1, D_1(x)),\dots, (N_n, D_n(x))$.
The ratio estimator (CDE) of $f(x)$ is 
\[
  \hat f(x) = \frac{\sum_{i=1}^n D_i(x)}{\sum_{i=1}^n N_i}.
\]
It can be computed at any $x\in [0,\infty)$.
For independent realizations (with MC), the variance of $\hat f(x)$ 
can be estimated using the delta method for ratio estimators \citep{sASM07a}:
\[
 n \Var[\hat f(x)] \to \frac{\Var[D_i(x)] + \Var[N_i] f^2(x) - 2\Cov[D_i(x), N_i] f(x)}{(\EE[N_i])^2}
\]
asymptotically, when $n\to\infty$.
This variance can be estimated by replacing the unknown quantities in this expression by their 
empirical values.   
This is consistent because the $n$ pairs $(D_i(x), N_i)$, $i=1,\dots,n$, are independent.
Alternatively, a confidence interval on $f(x)$ can also be computed with a 
bootstrap approach \citep{sCHO99a}.

In the RQMC case, the pairs $(D_i(x), N_i)$ are no longer independent.
Then, to obtain an estimator of $f(x)$ for which we can estimate the variance, we make $n_r$
independent replicates of the RQMC estimator of the pair $(\EE[D(x)], \EE[N])$, say
$(\bar D_1(x), \bar N_1), \dots, (\bar D_{n_r}(x), \bar N_{n_r})$, where each 
$(\bar D_j(x), \bar N_j)$ is the average of $n$ pairs $(D_i(x), N_i)$ sampled by RQMC.
We estimate the density $f(x)$ by the ratio of the two grand sums 
\[
   \hat f_{{\rm rqmc},n_r}(x) = \frac{\sum_{j=1}^{n_r} \bar D_j(x)}{\sum_{j=1}^{n_r} \bar N_j}.
\]
To estimate the variance, we use that
\[
  \Var[\hat f_{{\rm rqmc},n_r}(x)] \approx 
	\frac{\Var[\bar D_j(x)] + \Var[\bar N_j] f^2(x) - 2\Cov[\bar D_j(x), \bar N_j] f(x)}{n_r (\EE[N])^2}
\]
and we replace all the unknown quantities in this expression by their empirical values.

Here, the required dimension of the RQMC points is the (random) total number of inter-arrival 
times $A_j$ and service times $S_j$ that we need to generate during the day.  
It is approximately twice the number of customers that arrive during the day.
This number is unbounded, so the RQMC points must have unbounded (or infinite) dimension,
and one must be able to generate the points without first selecting a maximal dimension.
Recurrence-based RQMC point sets have this property;
they can be provided for instance by ordinary or polynomial Korobov lattice rules
\citep{vLEC00b,vLEC02a}, which    
are available in the hups package of SSJ \citep{iLEC16j}.

\subsubsection{Steady-state model.}

In a slightly different setting, we can assume that the single queue evolves in steady-state
over an infinite time horizon, under the additional assumptions that the $A_j$'s are i.i.d.
and the $S_j$'s are also i.i.d. 
Again, we want to estimate the density of the waiting time $W$ 
of a random customer.  In this case, the system regenerates whenever a new customer arrives
in an empty system. The regenerative cycles can be much shorter on average than 
for the previous case, unless the day is very short or the utilization factor of the system is close to 1.
The CDE has exactly the same form, apart from the different definition of regenerative cycle.
In this case $n$ represents the number of regenerative cycles, 
$N_i$ is the number of customers in the $i$th cycle and $D_i(x)$ is the realization of
$D(x)$ over the $i$th cycle.

In both settings, one could also hide $A_j$ instead of $S_{j-1}$.  
The density estimator is similar and easy to derive.  
Intuition says that this should be a better choice if $A_j$ has more variance than $S_{j-1}$.

\subsubsection{The GLRDE estimator.}

\hpierre{I think in the paper, we should add a section that describes this density estimation approach 
  in a more general setting, with our notation. This would be worthwhile, because things are
	not very clear and easy in Peng's papers.}

\cite{oPEN20a}, Section~4.2.2., show how to construct a GLRDE for the 
density of the \emph{sojourn time} of customer $j$ in this single-queue model.
The density of the \emph{waiting time} can be estimated as follows.
If the service times $S_j$ are lognormal with parameters $(\mu, \sigma^2)$, we can write 
\[
  X = W_j = \max(0,\, W_{j-1} + S_{j-1} - A_j) 
	    = \max(0,\, W_{j-1} + \exp[\sigma Z_{j-1} + \mu] - A_j) 
			=: h(\bY) 
\]
where $Z_{j-1}$ has the standard normal density $\phi$, and 
$\bY = (Y_1,Y_2,Y_3) = (Z_{j-1}, A_j, W_{j-1})$. 
When $W_j > 0$, taking the derivative of $h$ with respect to $Y_1=Z_{j-1}$
gives $h_1(\bY) = \exp[\sigma Z_{j-1} + \mu] \sigma = S_{j-1} \sigma$,
$h_{11}(\bY) = S_{j-1} \sigma^2$, and these derivatives are 0 when $W_j=0$.
We also have $\partial \log \phi(x) / \partial x = -x$, and therefore for $x > 0$, 
$f(x) = {\EE[L(x)]}/{\EE[N]}$ where $L(x) = \sum_{j=1}^N \II[W_j \le x] \cdot \Psi_j$ and
$\Psi_j = - (Z_{j-1} + \sigma) / (S_{j-1} \sigma)$.
We can do $n$ runs to estimate each of the two expectations in the ratio.
This provides a very similar density estimator as with the CDE in (\ref{eq:queue-cde0}),
but here $L(x)$ is discontinuous in $x$, whereas $D(x)$ in (\ref{eq:queue-cde0}) 
is continuous.

\subsubsection{Numerical results.}

For a numerical illustration, suppose the time is in minutes, let
the arrival process be Poisson with constant rate $\lambda = 1$, and 
the service times $S_j$ lognormal with parameters $(\mu, \sigma^2) = (-0.7, 0.4)$. 
This gives $\EE[S_j] = e^{-0.5} \approx 0.6065$ and $\Var[S_j] = e^{-1}(e^{0.4}-1) \approx 0.18093$.
For RQMC, we use infinite-dimensional RQMC points defined by Korobov lattice rules \citep{vLEC00b}
selected with \texttt{Lattice Builder} \citep{vLEC16a} using order-dependent weights 
$\gamma_{k} = 0.005^{k}$ for projections of order $k$. 
We do not use Sobol' points because 
with the available software, there is an upper bound on the dimension.

\paragraph{Finite-horizon case.}
For the finite-horizon case, take $\tau = 60$, {so $\EE[N]=60$, we only need
to estimate the numerator, and we have an unbiased density estimator all over $[0,\infty)$.}
%
\hpierre{What are the units here?   Arrival rate is $\lambda=1$ customer per hour, or per minute,
 or per second? Never said.  Also, what are the units for the service times? 
 And for the waiting times? }%
\hpierre{I think in this example, we know $\EE[N]$ exactly, and therefore we can replace it 
 by its exact value in the ratio estimator.   After that, we no longer have a ratio estimator,
 but an \emph{unbiased} estimator of the ratio! Amal, Florian: Is this what you have done?}%
\hpierre{The first thing to do is to make experiments with the CDE for this example
 combined with Monte Carlo only (no RQMC) and see what happens, for the two settings.
 We want to see if the MISE converges as $\cO(1/n)$ as expected.
 We also want to compare with the GLRDE.}%
\hpierre{For this example, RQMC might not help much when simulating days (first setting),
 but it is likely to help more for the steady-state setting, because the regenerative cycles 
 will then be much shorter on average.  
 Do RQMC with ``LCG point sets'' in SSJ (they are Korobov lattice points).  
 Good parameters were found using \texttt{Latnet Builder}.}
\hpierre{We estimate the density of the waiting time over the interval $[a,b]=[0.045,2.2]$. 
The left boundary was selected so that we could guarantee that for the smallest $n$ 
considered ($n=2^{14}$) there would still be one data point to the left of $a$. 
An experiment with $2^{20}$ samples showed that we cut off a mere $0.005\%$ of the mass on the left side and that the right boundary $b$ is the $99\%$ percentile. We also performed an experiment with $a$ being the $1\%$ percentile, 
but the results were almost indistinguishable.}
The results for $(a,b] = (0,2.2]$ are in Table~\ref{tab:queue-finite-horizon}.
\hpierre{We also want to compare with GLRDE with and without RQMC. We want to show that the CDE idea
  is good just by itself, not only RQMC.}%
Due to the large and random dimensionality of the required RQMC points, and more importantly the 
discontinuity of the derivative of the CDE with respect to the underlying uniforms
(because of the max, the HK variation is infinite), 
it was unclear if RQMC could bring any significant gain for this example.
The good surprise is that although RQMC does not improve $\tilde\nu$ significantly, 
it improves the IV itself by a factor of about $2^{7.5} \approx 180$
for $n=2^{19}$, which is quite significant.
We also see that CDE beats GLRDE by a factor of about 500 with MC and about 200 with RQMC.
\hflorian{I know, this plot is rather meaningless, but I wanted to guarantee that everything 
 works correctly, so I estimated the variance for every $n=2^{14},\ldots,2^{19}$ individually. 
 This is what is contained in the plot.}

\begin{table}[!htbp]	
	\centering
	\caption{Values of $\hat\nu$ and e19 for the single queue example, finite-horizon case.}
	\label{tab:queue-finite-horizon}
\small
	\begin{tabular}{l | l| c c }
	\hline
	&       	  & $\hat\nu$	&  e19 \\
	\hline
	\multirow{3}{*}{CDE}
	& MC 		    & 1.00	& 24.8\\
	& Lat+s  	  & 0.99	& 32.3\\
	& Lat+s+b	  & 1.02	& 32.3\\
	\hline
	\multirow{3}{*}{GLRDE}
	& MC 		    & 1.00	& 15.8 \\
	& Lat+s  	  & 1.03	& 24.6 \\
	& Lat+s+b	  & 1.08	& 25.0 \\
	\hline
	\end{tabular}
\end{table}

\ifplots   

\begin{figure}[!htbp]
  \begin{tikzpicture} 
    \begin{axis}[ 
      title ={},
		  width=.45\columnwidth,
		  height=.30\columnwidth,
      xlabel=$x$,
      cycle list name=dens,
       grid,
      ] 
      \addplot table[x=x,y=y] { 
      x  y 
      0.002794244685024697  0.41282111409741357 
      0.05100760643872949  0.43273563102311124 
      0.09480209234283196  0.4499388862925675 
      0.1501686309843965  0.46442981722169113 
      0.18087585814720453  0.46751174004078616 
      0.23173469853420214  0.46509792119086385 
      0.2745770324729436  0.4575192082719382 
      0.3158223173476388  0.44687290961814663 
      0.3549917450287107  0.4347479350273885 
      0.41262874921755405  0.4148923269254528 
      0.4526622170158059  0.4005793314919085 
      0.49302141005928946  0.3860256487355689 
      0.5351792529475141  0.3711095680692716 
      0.5772883172002552  0.3564109132046836 
      0.6294213856127225  0.3390334389702375 
      0.6798919802041565  0.32304954519088047 
      0.7105774487545969  0.31377636159236 
      0.7487513989364474  0.30239523594559947 
      0.8132613515826783  0.2844652979188286 
      0.8391568989624817  0.27758144980765465 
      0.89657791433721  0.26321279288179117 
      0.927993701144748  0.25567810473772595 
      0.969770829312581  0.2460244262962898 
      1.016789912831574  0.2356645048824472 
      1.070934381303925  0.22456853329996515 
      1.1042317533363135  0.2179480673514368 
      1.1555408610469893  0.20827369763523002 
      1.1925359860149876  0.2015562900856097 
      1.2456623655221826  0.1923843554353138 
      1.2929245026880636  0.18466436198560907 
      1.339549921931323  0.1774130528931131 
      1.3833389442024058  0.17088008409328664 
      1.4265571720688355  0.16472537702868378 
      1.4679794058873588  0.15905284195422323 
      1.5070853102355601  0.15392596567063785 
      1.541747168195297  0.14955321027419316 
      1.5849167630729935  0.14430279767023022 
      1.6485360778797198  0.136940727625966 
      1.6935106686493893  0.13205168883677923 
      1.7265332771956272  0.12853110011655644 
      1.7729181958360098  0.12385436344078886 
      1.8188156111989284  0.11946268370870343 
      1.8480443137230393  0.11673487404654535 
      1.912715583212404  0.11092988576537006 
      1.9446093502311843  0.10821047703416542 
      1.992123678438767  0.10430887574466767 
      2.0398972000471822  0.10057069129559998 
      2.08264753147836  0.09736248877472524 
      2.1149911053693273  0.09501210513696762 
      2.1678319475979184  0.09134568564384005 
 }; 
       \addlegendentry{density}
  \end{axis}
  \end{tikzpicture}
\hfill
   \begin{tikzpicture} 
    \begin{axis}[ 
      width = 0.5\textwidth,
	    height = 5cm,
      xlabel=$\log n$,
      ylabel=$\log{\rm IV}$,
       grid,
       legend style={at={(1.25,1.1)},anchor=north east},   
       cycle list name=comparison,
      ] 
      \addplot table[x=logN,y=logIV] { 
      logN  logIV 
      14.0  -19.853407349363017 
      15.0  -20.837730535905706 
      16.0  -21.835504415895194 
      17.0  -22.843850664973367 
      18.0  -23.842744090719222 
      19.0  -24.842239203877426 
 }; 
      \addlegendentry{MC-CDE}
%
%
      \addplot table[x=logN,y=logIV] { 
      logN  logIV 
      14.0  -10.855182045494894 
      15.0  -11.838606936573392 
      16.0  -12.83926039468427 
      17.0  -13.83398707870161 
      18.0  -14.843645560130184 
      19.0  -15.839419424762527 
 }; 

      \addlegendentry{MC-GLRDE}
      \addplot table[x=logN,y=logIV] { 
      logN  logIV 
      14.0  -27.29067223962574 
      15.0  -28.07666259760639 
      16.0  -29.294888069803527 
      17.0  -30.021630892383126 
      18.0  -31.12253797408318 
      19.0  -32.25043212022527 
 }; 
      \addlegendentry{Lat+s-CDE}
      \addplot table[x=logN,y=logIV] {
      logN  logIV 
       14.0	 -19.654037
       15.0	 -20.350068
       16.0	 -21.478259
       17.0	 -22.856442
       18.0	 -23.549005
       19.0  -24.636864 
 }; 
      \addlegendentry{Lat+s-GLRDE}
    \end{axis}
  \end{tikzpicture}
  \caption{Estimated density (left) and $\log\IV$ as a function of $\log n$ (right) 
    for the single queue over a finite-horizon.}
  \label{fig:queue-finite-horizon}
\end{figure}

\fi  

\paragraph{Steady-state case.}

We performed a similar experiment using regenerative simulation for the steady-state model.
The density is similar but not exactly the same as in the finite-horizon case.
The results are in Table~\ref{tab:queue-steady-state}.
They are similar to those of the finite-horizon case, with similar empirical convergence rates, 
and the IV for $n = 2^{19}$ is again about 180 times smaller with CDE+RQMC compared to CDE+MC.
The IV for GLRDE with $n = 2^{19}$ is roughly {300} times larger than with CDE with MC and 
{200} times larger than with CDE with RQMC. 
The only important difference is that here, the IV is about $30$ times \emph{larger}
than in the finite-horizon case, for all the methods.  
The explanation is that in the finite-horizon case, we simulate $n$ runs with about 60 customers
per run, whereas in the steady-state case, we have about 2.5 customers per regenerative cycle on average,
so we simulate about 25 times fewer customers.
\hpierre{Is that right?  How many customers per cycle on average?}  
\hflorian{60 in the finite-horizon case and 2.5 in the steady state case (the medians are 60 and 1 resp.). 
 In both cases I estimated with $n=2^{19}$ points. Interestingly the largest cycle is 98 for finite-horizon
 and 114 for the steady-state. I.e., we need higher dimensional point sets in the steady state case.}%
Interestingly, the fact that we use much more coordinates of the RQMC points in the finite-horizon case
(on average) makes no significant difference.  
A similar observation was made by \cite{vLEC00b}, Section 10.3, 
who compared finite-horizon runs of 5000 customers each on average, with regenerative simulation,
in the context of estimating the probability of a large waiting time using RQMC.
The reason why RQMC performs well even for a very large time horizon is that the integrand has 
\emph{low effective dimension in the successive-dimensions sense}
(as defined by these authors).
%
Appendix C of the Supplement provides additional plots for this example.

\begin{table}[!htbp]	
	\centering
	\caption{Values of $\hat\nu$ and e19 for the single queue example, steady-state case.}
	\label{tab:queue-steady-state}
\small
	\begin{tabular}{l | l| c c }
	\hline
	&       	  & $\hat\nu$	&  e19 \\
	\hline
	\multirow{3}{*}{CDE}
	& MC 		    & 0.99	& 19.9\\
	& Lat+s  	  & 1.04	& 27.6\\
	& Lat+s+b	  & 1.08	& 27.8\\
	\hline
	\multirow{3}{*}{GLRDE}
	& MC 		    & 0.99	& 11.5\\
	& Lat+s  	  & 1.20	& 20.1\\
	& Lat+s+b	  & 1.21	& 20.4\\
	\hline
	\end{tabular}
\end{table}

\ifplots  

\begin{figure}[!htbp]
  \begin{tikzpicture} 
    \begin{axis}[ 
      title ={},
		width=.45\columnwidth,
		height=.35\columnwidth,
      xlabel=$x$,
      cycle list name=dens,
       grid,
      ] 
      \addplot table[x=x,y=y] { 
      x  y 
      0.005906017175165837  0.39742752587153185 
      0.10781153179095096  0.43589591609120293 
      0.2003771497246221  0.44892750275994364 
      0.3174018791261108  0.42720191770134747 
      0.3823057910838641  0.4066324644046126 
      0.4898028855382  0.3703009946763053 
      0.5803560004541763  0.33985152408428554 
      0.6675335343938729  0.31233946371649896 
      0.7503234610834112  0.28782182578232635 
      0.872147129028012  0.2557573228592747 
      0.9567633223288625  0.23534128033385596 
      1.042067980352589  0.21661513772725566 
      1.1311743300936095  0.19963333655108015 
      1.220177579536903  0.1836563945861569 
      1.3303679286814363  0.16533184260031605 
      1.43704441270424  0.15001706772845466 
      1.5019023348676708  0.14143204714151175 
      1.5825881841156728  0.13088798567881774 
      1.7189387658452064  0.11504780412967734 
      1.7736725364434272  0.1088141861309927 
      1.8950396825763756  0.09725014657781669 
      1.9614412319650356  0.09211788155390024 
      2.049742889228864  0.08526292123636937 
      2.1491241339394636  0.07823271660814425 
      2.263565851392387  0.07137039105683644 
      2.3339443877335717  0.06755916979791599 
      2.442393183576591  0.06184584019973621 
      2.520587424986224  0.05748247173296238 
      2.6328772725809766  0.051925064249434244 
      2.7327722443179527  0.04785302190670782 
      2.8313214259002955  0.04402821664601987 
      2.923875495700539  0.04054204361942753 
      3.015223113690948  0.03810055665773806 
      3.1027746533528267  0.03532923654129244 
      3.1854303148160703  0.03314924537992229 
      3.2586928782309683  0.030895804771667913 
      3.3499377037679183  0.02926389232797021 
      3.484405800973044  0.0265271200409409 
      3.579465731463482  0.02448879174596624 
      3.6492635177089396  0.023049062342575997 
      3.7473043684715663  0.021253165221502675 
      3.844314814579553  0.019502573188140145 
      3.906093663096424  0.018393221307815982 
      4.042785209971672  0.016385551548199307 
      4.110197035715912  0.015529812949185463 
      4.210625047609212  0.01430736523687535 
      4.311600900099727  0.013176668284291716 
      4.401959555170171  0.012294833956339717 
      4.470322109076078  0.011505996756587677 
      4.5820084346956005  0.01078568327727437 
 }; 
       \addlegendentry{density}
  \end{axis}
  \end{tikzpicture}
\hfill		
		\begin{tikzpicture} 
    \begin{axis}[ 
		width=.45\columnwidth,
		height=.35\columnwidth,
      xlabel=$\log n$,
      ylabel=$\log{\rm IV}$,
      cycle list name=comparison,
	  legend style={at={(1.45,1.)},anchor=north east},     
       grid,
	] 
      \addplot table[x=logN,y=logIV] { 
      logN  logIV 
      14.0  -14.90092233824423 
      15.0  -15.922674757355777 
      16.0  -16.91721009636318 
      17.0  -17.877637604940823 
      18.0  -18.87594158390892 
      19.0  -19.882423945433235 
 };
      \addlegendentry{MC-CDE}
%
      \addplot table[x=logN,y=logIV] { 
      logN  logIV 
      14.0  -6.620403056650534 
      15.0  -7.563311157521403 
      16.0  -8.471188538507484 
      17.0  -9.459099061707633 
      18.0  -10.512135806985476 
      19.0  -11.478168194523814 
 }; 
      \addlegendentry{MC-GLRDE}
      \addplot table[x=logN,y=logIV] { 
      logN  logIV 
      14.0  -22.392693569981542 
      15.0  -23.382865497835372 
      16.0  -24.7233343106602 
      17.0  -25.55620435940835 
      18.0  -26.517461119406576 
      19.0  -27.647943633942436 
 }; 
      \addlegendentry{Lat+s-CDE}
      \addplot table[x=logN,y=logIV] { 
      logN  logIV 
      14.0  -14.136963821198073 
      15.0  -15.036836033546011 
      16.0  -16.562985946524233 
      17.0  -17.554005390299235 
      18.0  -18.722183207944685 
      19.0  -20.13907459572688 
 }; 
      \addlegendentry{Lat+s-GLRDE}
    \end{axis}
  \end{tikzpicture}
  \caption{Estimated density (left) and $\log\IV$ as a function of $\log n$ (right) 
    for the single queue in steady-state.}
  \label{fig:queue-steady-state}
\end{figure}

\fi  

\subsection{Making a change of variable}
\label{sec:change-variable}

In many situations, $X = h(\bY)$ for a random vector $\bY$
and hiding a single coordinate of $\bY$ does not provide a very effective CDE.
But sometimes, after an appropriate change of variable $\bY = g(\bZ)$, 
hiding one coordinate of the random vector $\bZ$ can provide a much more effective CDE.
We will use this technique in Section~\ref{sec:function-multinormal}.
We describe it here in a separate subsection because it can be useful for a much
wider range of applications.

Specifically, let $\bZ_{-j}$ denote the vector $\bZ$ with $Z_j$ (the $j$th coordinate) removed,
and let $\gamma(z) = \gamma(z; \bZ_{-j}) = h(g(z; \bZ_{-j}))$ denote the value of 
$h(\bY)$ as a function of $Z_j = z$ when $\bZ_{-j}$ is fixed.
We assume in the following that for almost any realization of $\bZ_{-j}$, 
$\gamma(z; \bZ_{-j})$ is a monotone non-decreasing and differentiable function of $z$, 
so that $\gamma^{-1}(x) = \inf \{z \in\RR : \gamma(z) \ge x\}$
is well defined for any $x$.  
We also assume that $Z_j$ has density $\varphi$ and is independent of $\bZ_{-j}$ (to simplify).
Conditional on $\bZ_{-j}$, we have
\[
   \PP[x < h(\bY) \le x+\delta \mid \bZ_{-j}] 
 = \PP[x < \gamma(Z_j) \le x+\delta \mid \bZ_{-j}] 
 = \PP[z < Z_j \le z + \Delta \mid \bZ_{-j}]
 \approx \varphi(z) \Delta 
\]
where $z = \gamma^{-1}(x)$ and $z + \Delta = \gamma^{-1}(x+\delta)$.
Taking the limit gives
\[
  f(x \mid \bZ_{-j}) 
	     = \lim_{\delta\to 0} \frac{\PP[z < Z_j \le z + \Delta \mid \bZ_{-j}]}{\delta} 
	     = \lim_{\delta\to 0} \frac{\varphi(z) \Delta}{\delta}
			 = \frac{\varphi(z)}{\gamma'(z)}
			 = \frac{\varphi(\gamma^{-1}(x))}{\gamma'(\gamma^{-1}(x))},
\]
assuming that the latter is well defined.
In case there are closed-form formulas for $\gamma^{-1}$ and $\gamma'$, 
this CDE can be evaluated directly. 
Otherwise, $z= \gamma^{-1}(x)$ can often be computed by a 
few iterations of a root-finding algorithm.  
Since $\gamma$ and its inverse $\gamma^{-1}$ depend on $\bZ_{-j}$, 
this could mean inverting a different function for each sample realization.  
Our next example will show that the approach could nevertheless bring a huge benefit.

\subsection{A function of a multivariate normal vector}
\label{sec:function-multinormal}

We consider a multivariate normal vector $\bY = (Y_1,\dots,Y_s)^\tr$ 
\mpierre{(where $^{\tr}$ means transposed)} defined via
$Y_j = Y_{j-1} + \mu_j + \sigma_j Z_j$ with $Y_0=0$, the $\mu_j$ and $\sigma_j > 0$ are constants,
and the $Z_j$ are independent $\cN(0,1)$ random variables, with cdf $\Phi$ and density $\phi$.
Let $X = \bar S = (S_1 + \cdots + S_s)/s$ where $S_j = S_0 e^{Y_j}$ for some constant $S_0>0$.
We want to estimate the density of $X$ over some interval $(a,b) = (K, K+c)$ 
where $K\ge 0$ and $c > 0$.  
This is the same as estimating the density of $\max(0, \bar S-K)$, which may represent the 
payoff of a financial contract, for example \citep{fGLA04a}.
A simple way to define the CDE here is to hide $Z_s$.
The conditional cdf is 
$\PP[X \le x \mid \bZ_{-s}] = \PP[Z_s \le W(x)] = \Phi(W(x))$ where
\[
 W(x) = (\ln[sx - (S_1 + \cdots + S_{s-1})/S_0] - {\ln S_0} - Y_{s-1} - {\mu_s}) / {\sigma_s}.
\]
Taking the derivative with respect to $x$ gives the unbiased CDE
\begin{eqnarray}
  f(x \mid \bZ_{-s}) = \frac{\partial}{\partial x} \PP[\bar S \le x \mid \bZ_{-s}] 
	&=& \phi(W(x)) W'(x)  ~=~  \frac{\phi(W(x)) s}{[sx - (S_1 + \cdots + S_{s-1})/S_0] {\sigma_s}}.	
\label{eq:asian-sequential-cde}
\end{eqnarray}
Unfortunately, this \emph{sequential} CDE is usually rather spiky, 
because hiding only this $Z_s$ does not remove much information, 
and then the conditional density has a large variance.

We now describe a less obvious but more effective conditioning approach.
The goal is to hide a variable that contains more information.
For this, we generate the vector $\bY$ using a Brownian bridge construction in which
the $Z_j$'s are used in a different way, as follows \citep{vCAF97a,fGLA04a}.
Let $\bar\mu_j = \mu_1 + \cdots + \mu_j$ and $\bar\sigma_j = \sigma_1 + \cdots + \sigma_j$,
for $j=1,\dots,s$.
With this construction, we first sample $Y_s = \bar\mu_s + \bar\sigma_s Z_s$.
Then, given $Y_s = y_s$, we put $j_2 = \lfloor s/2\rfloor$, and we sample 
$Y_{j_2}$ from its normal distribution conditional on $Y_s=y_s$, 
which is normal with mean $y_s \bar\mu_{j_2} / \bar\mu_s$ and variance 
$(\bar\sigma_s - \bar\sigma_{j_2}) \bar\sigma_{j_2} / \bar\sigma_s$.
This uses the fact that if $X_1$ and $X_2$ are independent and normal, then conditional
on $X_1+X_2 = \bar x$, $X_1$ is normal with mean $\bar x \EE[X_1]/\EE[X_1+X_2]$ and 
variance $\Var[X_1]\Var[X_2]/\Var[X_1+X_2]$.
Then we put $j_3 = \lfloor j_2/2\rfloor$ and we sample $Y_{j_3}$ conditionally on $Y_{j_2}$,
then we put $j_4 = \lfloor (j_2+s)/2\rfloor$ and we sample $Y_{j_4}$ conditionally on $(Y_{j_2}, Y_s)$,
and so on, until all the $Y_j$'s are known.

For the CDE, we hide again $Z_s$, but now $Z_s$ has much more impact on the payoff,
because all the $Y_j$'s depend on $Z_s$.
This makes the conditional density much less straightforward to compute, 
but we can proceed as follows.
To avoid sampling $Z_s$, we sample $Y_1,\dots,Y_{s-1}$ conditional on $Z_s = z_s = 0$, 
which will give say $Y_1^0,\dots,Y_{s-1}^0$,
and then write $X$ as a function of $z = z_s$ conditional on these values,
that is, conditional on $\bZ_{-s} = (Z_1,\dots,Z_{s-1})$.
We have $Y_s = Y_s^0 + \bar\sigma_s Z_s$
and $Y_j = Y_j^0 + (\bar\mu_j/\bar\mu_s) \bar\sigma_s Z_s$.  Then,
\[
 X = \bar S = \frac{S_0}{s} \sum_{j=1}^s e^{Y_j} 
   =  \frac{S_0}{s} \sum_{j=1}^s \exp[Y_j^0 + Z_s (\bar\mu_j/\bar\mu_s) \bar\sigma_s].
\]
This fits the framework of Section~\ref{sec:change-variable}, with $j=s$, 
\[
  \gamma(z) =  \frac{S_0}{s} \sum_{j=1}^s \exp[Y_j^0 + z (\bar\mu_j/\bar\mu_s) \bar\sigma_s]
\quad\mbox{ and }\quad
  \gamma'(z) = \frac{S_0}{s} \sum_{j=1}^s \exp[Y_j^0 + z (\bar\mu_j/\bar\mu_s) \bar\sigma_s] 
	                                      (\bar\mu_j/\bar\mu_s) \bar\sigma_s.
\]
The CDE at $x = \gamma(z)$ is then $f(x \mid \bZ_{-s}) = \phi(z) / \gamma'(z)$.
We call it the \emph{bridge} CDE.

To compute this density at a specified $x$ we need $z = \gamma^{-1}(x)$,
We have no explicit formula for $\gamma^{-1}$ in this case, 
but we can compute a root of $\gamma(z) - x = 0$ numerically.
To evaluate the density at the $n_e$ evaluation points $e_{1},\dots, e_{n_e}$ in $(a,b)$,
we  first compute $x_* = \gamma(0)$ and let $j_*$ be the smallest $j$ for which $e_{j} \ge x_*$.  
We compute $z = w_{j_*}$ such that $\gamma(w_{j_*}) = e_{j_*}$.  
This can be done via Newton iteration, 
$z_k = z_{k-1} - (\gamma(z_{k-1})-e_{j^*}) / \gamma'(z_{k-1})$, starting with $z_0 = 0$.
Then, for $j = j_*+1,\dots, n_e$, we use again Newton iteration to find $z = w_{j}$ 
such that $\gamma(w_{j}) = e_{j}$, starting at $z_0 = w_{j-1}$.
We do the same to find $z = w_{j}$ such that $\gamma(w_{j}) = e_{j}$ for $j = j_*-1,\dots, 1$,
starting at $z_0 = w_{j+1}$.
This provides the point $w_j$ required to evaluate the conditional density at $e_j$, for each $j$.
We must repeat this procedure for each realization of $\bZ_{-j}$, because the function $\gamma$
depends on $\bZ_{-j}$.  However, the gain in accuracy is more significant than the cost of 
additional computations.
This conditioning differs from the simpler ones used by 
\cite{fBOY97a} and \cite{oHEI15a} for barrier options.

For a numerical illustration, we take $S_0=100$, $s=12$, $\mu_j = 0.00771966$
and $\sigma_j=0.035033$ for all $j$, and $K=101$. 
We estimate the density of the payoff over $[a,b]=[101,\, 128.13]$. 
To approximate the root of $\gamma(z)-x = 0$ for the bridge CDE, we use five Newton iterations;
doing more makes no significant difference.
The results are in Table~\ref{tab:asianBridge},
with additional plots in the Supplement.
RQMC with the bridge CDE performs extremely well.
For example, for Sob+LMS, the MISE with $n=2^{19}$ is approximately $2^{-46.9}$,
which is about $2^{19}$ (half a million) times smaller than for the same CDE with MC, 
and it decreases as $\cO(n^{-2})$.
With a KDE, the MISE with $n=2^{19}$ is about $2^{21} \approx 2$ million times larger 
with the same Sobol' points and $2^{26}\approx  67$ million times larger with MC.
With the sequential CDE, RQMC is ineffective and the IV of the MC estimator is also 
quite large, as expected.
To illustrate the behavior of the sequential and bridge CDEs, 
Figure~\ref{fig:asian-cde} plots five single realizations of each, using the same 
horizontal scale. The sequential CDE has much more spiky realizations than the bridge CDE,
and this explains why the latter performs much better.

\begin{table}[!htbp]	
	\centering
	\caption{Values of $\hat\nu$ and e19 for the Asian option, with sequential 
	  and bridge CDE constructions.}
	\label{tab:asianBridge}
\small
	\begin{tabular}{l | l| c c }
	\hline
  	&     & $\hat\nu$	&  e19 \\
	\hline
	\multirow{2}{*}{sequential KDE}
		& MC 			 & 0.78		&  20.4 \\
		& Sob+LMS   &   0.76		&  20.6 \\
 \hline
	\multirow{4}{*}{sequential CDE}
		& MC 			 & 1.00		& 19.9 \\
		& Lat+s	   & 1.07 		& 20.3 \\
		& Lat+s+b	 & 1.01  		& 20.1 \\
		& Sob+LMS   &  1.00 		& 20.0\\
 \hline
	\multirow{4}{*}{bridge CDE}
		& MC 			 & 1.04		& 27.9 \\
		& Lat+s	   & 1.60		& 40.0\\
		& Lat+s+b	 & 1.74		& 45.0\\
		& Sob+LMS 	& 2.01 		& 46.9\\
	\hline
	\end{tabular}
\end{table}

\begin{figure}[hbt]
\begin{center}
\begin{tikzpicture} 
  \begin{axis}[ 
  	cycle list name=canti,
  legend style={at={(0.6,1.3)}, anchor={north west}},
  xlabel=$x$,
  ylabel=,
  grid,
  xmin=101,
  xmax=128.13,
  width = 0.5\textwidth,
  height = 5cm,
  ] 
     \addplot table[x=x,y=density] { 
      x  density 
      101.05298828125  1.2129642531673865 
      101.15896484375  1.2861792185105183 
      101.26494140625  1.2133794530608826 
      101.37091796875  1.0226287270798182 
      101.47689453125  0.7729905479498063 
      101.58287109375  0.5260195872847049 
      101.68884765625  0.3234220676636093 
      101.79482421875  0.18029392624962115 
      101.90080078125  0.09142784731754207 
      102.00677734375  0.04231025835857486 
      102.11275390625  0.017923010879018656 
      102.21873046875  0.00697027481036857 
      102.32470703125  0.0024956735722511483 
      102.43068359375  8.24901507192927E-4 
      102.53666015625  2.5236249071068254E-4 
      102.64263671875  7.16381671318098E-5 
      102.74861328125  1.891508410423375E-5 
      102.85458984375  4.656118060796985E-6 
      102.96056640625  1.0709314950994514E-6 
      103.06654296875  2.306511208776754E-7 
      103.17251953125  4.661283047990821E-8 
      103.27849609375  8.856833512979716E-9 
      103.38447265625  1.5852958541432336E-9 
      103.49044921875  2.677976617020637E-10 
      103.59642578125  4.2770563214020014E-11 
      103.70240234375  6.469557592827194E-12 
      103.80837890625  9.283656528774248E-13 
      103.91435546875  1.265833872606632E-13 
      104.02033203125  1.6425619021851825E-14 
      104.12630859375  2.031446074360993E-15 
      104.23228515625  2.3980397639528096E-16 
      104.33826171875  2.705719557782043E-17 
      104.44423828125  2.9219443011480014E-18 
      104.55021484375  3.0240692655121437E-19 
      104.65619140625  3.003248917495191E-20 
      128.07701171875  0.0 
 }; 
      \addlegendentry{}
      \label{}
      \addplot table[x=x,y=density] { 
      x  density 
      101.05298828125  1.9123622117564208E-23 
      101.15896484375  2.4189708315487618E-21 
      101.26494140625  2.247392861167212E-19 
      101.37091796875  1.554945962757224E-17 
      101.47689453125  8.11704544845673E-16 
      101.58287109375  3.236442914541436E-14 
      101.68884765625  9.971734623430413E-13 
      101.79482421875  2.4003609785404838E-11 
      101.90080078125  4.5614107043123076E-10 
      102.00677734375  6.910551919447181E-9 
      102.11275390625  8.424955067323752E-8 
      102.21873046875  8.338824089615543E-7 
      102.32470703125  6.757242614597531E-6 
      102.43068359375  4.518801123909052E-5 
      102.53666015625  2.5127849899893176E-4 
      102.64263671875  0.0011702902616062493 
      102.74861328125  0.004596359535231257 
      102.85458984375  0.015323211727757797 
      102.96056640625  0.04363148731649812 
      103.06654296875  0.10674245934262828 
      103.17251953125  0.22563932727621114 
      103.27849609375  0.4143562390327186 
      103.38447265625  0.6644304728840253 
      103.49044921875  0.934924946571858 
      103.59642578125  1.1598341128387142 
      103.70240234375  1.274259457322874 
      103.80837890625  1.2451710534045426 
      103.91435546875  1.0866645830735506 
      104.02033203125  0.8502911058757182 
      104.12630859375  0.5988026607065099 
      104.23228515625  0.3809025363742633 
      104.33826171875  0.21961571140073533 
      104.44423828125  0.1151532504778794 
      104.55021484375  0.05508555004960808 
      104.65619140625  0.02411447385149379 
      104.76216796874999  0.009688880859999844 
      104.86814453125  0.0035830431310695005 
      104.97412109375  0.001222903231376095 
      105.08009765625  3.862133404922128E-4 
      105.18607421875  1.131481997449974E-4 
      105.29205078125  3.082491723875005E-5 
      105.39802734375  7.827071626399807E-6 
      105.50400390625  1.856570213520015E-6 
      105.60998046875  4.1226241370329413E-7 
      105.71595703125  8.587913757974793E-8 
      105.82193359375  1.6815959389531106E-8 
      105.92791015625  3.101078687502157E-9 
      106.03388671875  5.395953784041564E-10 
      106.13986328125  8.874960190952247E-11 
      106.24583984375  1.3821601834265845E-11 
      106.35181640625  2.041591532641467E-12 
      106.45779296875  2.864820588625409E-13 
      106.56376953125  3.8248955097060064E-14 
      106.66974609374999  4.866174422552312E-15 
      106.77572265625  5.907883855216414E-16 
      106.88169921875  6.854273008075048E-17 
      106.98767578125  7.609649204134026E-18 
      107.09365234375  8.094885778880432E-19 
      107.19962890625  8.261329672229851E-20 
      128.07701171875  1.9227520787021994E-308 
 }; 
      \addlegendentry{}
      \label{}
      \addplot table[x=x,y=density] { 
      x  density 
      101.05298828125  0.0 
      116.94947265625  4.2862060865061176E-20 
      117.05544921875  1.5390811758802903E-18 
      117.16142578124999  4.549358280153773E-17 
      117.26740234374999  1.1148184469157806E-15 
      117.37337890625  2.280065988170592E-14 
      117.47935546875  3.917153039175808E-13 
      117.58533203125  5.687722049910855E-12 
      117.69130859375  7.02096866639913E-11 
      117.79728515625  7.409357401604227E-10 
      117.90326171875  6.720736495753675E-9 
      118.00923828124999  5.266640660943203E-8 
      118.11521484375  3.5831397407190193E-7 
      118.22119140625  2.1264239691165688E-6 
      118.32716796874999  1.1057313293739238E-5 
      118.43314453125  5.0598748337558464E-5 
      118.53912109375  2.0460695834355601E-4 
      118.64509765625  7.340383248552545E-4 
      118.75107421875  0.002345270010674671 
      118.85705078125  0.006697844488359861 
      118.96302734375  0.017158335318223906 
      119.06900390625  0.03956247137203921 
      119.17498046875  0.08237084037004092 
      119.28095703125  0.15534779924147016 
      119.38693359375  0.26618681152276763 
      119.49291015624999  0.41560146458589864 
      119.59888671875  0.5929106466455817 
      119.70486328125  0.7749809041702144 
      119.81083984374999  0.9304813392529754 
      119.91681640624999  1.028783315015325 
      120.02279296875  1.0499908913091525 
      120.12876953125  0.9915192920848788 
      120.23474609375  0.8682482864440423 
      120.34072265625  0.7065663456308718 
      120.44669921875  0.5354695635953234 
      120.55267578125  0.37867348302387105 
      120.65865234374999  0.25037378447585323 
      120.76462890625  0.1550685310884436 
      120.87060546875  0.0901278374003366 
      120.97658203124999  0.04924453105921065 
      121.08255859375  0.025337219445487617 
      121.18853515625  0.01229633579698173 
      121.29451171875  0.005637661514229238 
      121.40048828124999  0.002445666367434996 
      121.50646484375  0.0010053501614964312 
      121.61244140625  3.921803575018718E-4 
      121.71841796875  1.4538155508263866E-4 
      121.82439453125  5.1283232600020526E-5 
      121.93037109375  1.723667222046597E-5 
      122.03634765625  5.5270722613427596E-6 
      122.14232421874999  1.6929158141435315E-6 
      122.24830078125  4.958982687954993E-7 
      122.35427734375  1.39081591099274E-7 
      122.46025390624999  3.7389894301403564E-8 
      122.56623046874999  9.645383292810721E-9 
      122.67220703125  2.3901464491075376E-9 
      122.77818359375  5.695277299053933E-10 
      122.88416015625  1.306240382764362E-10 
      122.99013671875  2.8864839865252724E-11 
      123.09611328125  6.151213798650586E-12 
      123.20208984375  1.265302046114903E-12 
      123.30806640624999  2.514514286441656E-13 
      123.41404296875  4.831860521823652E-14 
      123.52001953125  8.985420206235061E-15 
      123.62599609374999  1.618375408064165E-15 
      123.73197265625  2.825402628930583E-16 
      123.83794921875  4.7849282488037427E-17 
      123.94392578124999  7.866637765321306E-18 
      124.04990234374999  1.256426995193329E-18 
      124.15587890625  1.9508635965243468E-19 
      124.26185546875  2.9468425962570736E-20 
      128.07701171875  8.626767902882596E-56 
 }; 
      \addlegendentry{}
      \label{}
      \addplot table[x=x,y=density] { 
      x  density 
      101.05298828125  0.0 
      105.82193359375  3.8750462794379746E-19 
      105.92791015625  2.0549749268586696E-17 
      106.03388671875  8.484746217276715E-16 
      106.13986328125  2.755857883309227E-14 
      106.24583984375  7.110643601517244E-13 
      106.35181640625  1.4710397320105633E-11 
      106.45779296875  2.461655854787127E-10 
      106.56376953125  3.3600789981683904E-9 
      106.66974609374999  3.7708766412412965E-8 
      106.77572265625  3.505807740978612E-7 
      106.88169921875  2.719631650647321E-6 
      106.98767578125  1.772483284339488E-5 
      107.09365234375  9.768717037889876E-5 
      107.19962890625  4.5811772645868316E-4 
      107.30560546875  0.0018389798405399108 
      107.41158203124999  0.006354708495608 
      107.51755859375  0.019005555911009525 
      107.62353515625  0.049450930224275165 
      107.72951171875  0.11249181256823727 
      107.83548828125  0.2247863227159032 
      107.94146484375  0.39635351286516235 
      108.04744140625  0.6193509998738359 
      108.15341796875  0.8612519845190878 
      108.25939453125  1.0700042549621827 
      108.36537109375  1.1922128654013795 
      108.47134765625  1.1956896483910875 
      108.57732421875  1.0831745306134226 
      108.68330078125  0.8893065124346851 
      108.78927734375  0.663861020847576 
      108.89525390625  0.4519806259160682 
      109.00123046875  0.2814952556568507 
      109.10720703125  0.16083201420172852 
      109.21318359375  0.08453136981733692 
      109.31916015624999  0.04097846055482922 
      109.42513671875  0.01836921844072283 
      109.53111328125  0.0076328485577616635 
      109.63708984375  0.002946915563050372 
      109.74306640625  0.0010595471582579335 
      109.84904296875  3.55545287294336E-4 
      109.95501953125  1.1158548292635903E-4 
      110.06099609374999  3.28202460518417E-5 
      110.16697265625  9.064610112118872E-6 
      110.27294921875  2.3553275528434836E-6 
      110.37892578125  5.768213563029721E-7 
      110.48490234375  1.333781167222257E-7 
      110.59087890625  2.9168901105556154E-8 
      110.69685546875  6.043124143104728E-9 
      110.80283203125  1.1879506924059336E-9 
      110.90880859375  2.2192029468215543E-10 
      111.01478515625  3.945493227077127E-11 
      111.12076171875  6.685499309295688E-12 
      111.22673828125  1.081176815818724E-12 
      111.33271484375  1.6709840045793037E-13 
      111.43869140625  2.4712915936442706E-14 
      111.54466796874999  3.501846299851649E-15 
      111.65064453125  4.76013582753606E-16 
      111.75662109375  6.214436820631115E-17 
      111.86259765625  7.800801304381408E-18 
      111.96857421874999  9.42564256048443E-19 
      112.07455078125  1.0974415136411111E-19 
      112.18052734375  1.2325383467987164E-20 
      128.07701171875  1.0507260422669712E-225 
 }; 
      \addlegendentry{}
      \label{}
      \addplot table[x=x,y=density] { 
      x  density 
      101.05298828125  0.0 
      109.63708984375  5.351551393263135E-20 
      109.74306640625  2.8757182366729238E-18 
      109.84904296875  1.2132752916078831E-16 
      109.95501953125  4.058460332275914E-15 
      110.06099609374999  1.0863686391401207E-13 
      110.16697265625  2.3476235144587916E-12 
      110.27294921875  4.129956257130876E-11 
      110.37892578125  5.961830106172325E-10 
      110.48490234375  7.115642471431213E-9 
      110.59087890625  7.072521275256156E-8 
      110.69685546875  5.894360422945088E-7 
      110.80283203125  4.146085136110147E-6 
      110.90880859375  2.4767601488454582E-5 
      111.01478515625  1.264022627533958E-4 
      111.12076171875  5.542615485585199E-4 
      111.22673828125  0.0020995052116472524 
      111.33271484375  0.0069057419464684635 
      111.43869140625  0.019821891490465433 
      111.54466796874999  0.04988592453009286 
      111.65064453125  0.11058025924998334 
      111.75662109375  0.21683461701593512 
      111.86259765625  0.37769093270561027 
      111.96857421874999  0.5867211387774589 
      112.07455078125  0.8159679929105828 
      112.18052734375  1.0196520239633275 
      112.28650390625  1.1489345021855362 
      112.39248046875  1.171301803620516 
      112.49845703125  1.083877813816658 
      112.60443359375  0.9132343777921231 
      112.71041015624999  0.7027072037740568 
      112.81638671875  0.49522949353187945 
      112.92236328125  0.32053873135605365 
      113.02833984375  0.19105258395412347 
      113.13431640625  0.10513245074453155 
      113.24029296875  0.05354313717018276 
      113.34626953125  0.025297998305228157 
      113.45224609375  0.011114201358607884 
      113.55822265625  0.004550284045874561 
      113.66419921875  0.0017397660037686198 
      113.77017578125  6.224812296313695E-4 
      113.87615234375  2.0883547083803164E-4 
      113.98212890625  6.581944398285153E-5 
      114.08810546875  1.952435007929225E-5 
      114.19408203124999  5.460648962430144E-6 
      114.30005859375  1.4424594657404688E-6 
      114.40603515625  3.604754444606403E-7 
      114.51201171875  8.536054234654223E-8 
      114.61798828124999  1.918321692568471E-8 
      114.72396484375  4.097505158549027E-9 
      114.82994140625  8.330688862207482E-10 
      114.93591796875  1.6144127472818225E-10 
      115.04189453125  2.986136317063646E-11 
      115.14787109375  5.278811870807179E-12 
      115.25384765625  8.929889595240359E-13 
      115.35982421874999  1.4473502078686206E-13 
      115.46580078125  2.2502781215185336E-14 
      115.57177734375  3.359966573671375E-15 
      115.67775390624999  4.823423979807187E-16 
      115.78373046875  6.664508681851074E-17 
      115.88970703125  8.872154138815819E-18 
      115.99568359375  1.1391485013715866E-18 
      116.10166015624999  1.412051681142368E-19 
      116.20763671875  1.6914359298927245E-20 
 }; 
      \addlegendentry{}
      \label{}
      \addplot table[x=x,y=density] { 
      x  density 
      101.05298828125  0.2425928506334773 
      101.15896484375  0.2572358437021037 
      101.26494140625  0.2426758906121765 
      101.37091796875  0.20452574541596363 
      101.47689453125  0.1545981095899614 
      101.58287109375  0.10520391745694746 
      101.68884765625  0.06468441353292129 
      101.79482421875  0.03605878525472495 
      101.90080078125  0.018285569554736626 
      102.00677734375  0.008462053053825356 
      102.11275390625  0.0035846190257138658 
      102.21873046875  0.0013942217385555064 
      102.32470703125  5.004861629731492E-4 
      102.43068359375  1.740179036864035E-4 
      102.53666015625  1.0072819794192285E-4 
      102.64263671875  2.4838568574761185E-4 
      102.74861328125  9.230549238670981E-4 
      102.85458984375  0.003065573569163719 
      102.96056640625  0.008726511649598645 
      103.06654296875  0.021348537998749834 
      103.17251953125  0.04512787477780832 
      103.27849609375  0.08287124957791042 
      103.38447265625  0.13288609489386424 
      103.49044921875  0.18698498936793112 
      103.59642578125  0.23196682257629692 
      103.70240234375  0.25485189146586873 
      103.80837890625  0.2490342106810942 
      103.91435546875  0.21733291661473544 
      104.02033203125  0.17005822117514693 
      104.12630859375  0.11976053214130238 
      104.23228515625  0.0761805072748527 
      104.33826171875  0.04392314228014707 
      104.44423828125  0.02303065009557588 
      104.55021484375  0.011017110009921616 
      104.65619140625  0.004822894770298758 
      104.76216796874999  0.001937776171999969 
      104.86814453125  7.166086262139E-4 
      104.97412109375  2.44580646275219E-4 
      105.08009765625  7.724266809844257E-5 
      105.18607421875  2.2629639948999483E-5 
      105.29205078125  6.16498344775001E-6 
      105.39802734375  1.5654143252799614E-6 
      105.50400390625  3.7131404270400297E-7 
      105.60998046875  8.245248274065883E-8 
      105.71595703125  1.7175827515950713E-8 
      105.82193359375  3.363191877983722E-9 
      105.92791015625  6.202157416103813E-10 
      106.03388671875  1.0791924537575563E-10 
      106.13986328125  1.7755432097671113E-11 
      106.24583984375  2.906533238883514E-12 
      106.35181640625  3.3503977705494196E-12 
      106.45779296875  4.9290413507515054E-11 
      106.56376953125  6.720234494246975E-10 
      106.66974609374999  7.541754255717478E-9 
      106.77572265625  7.011615493772992E-8 
      106.88169921875  5.439263301431727E-7 
      106.98767578125  3.544966568680498E-6 
      107.09365234375  1.9537434075779914E-5 
      107.19962890625  9.162354529173665E-5 
      107.30560546875  3.6779596810798215E-4 
      107.41158203124999  0.0012709416991215998 
      107.51755859375  0.003801111182201905 
      107.62353515625  0.009890186044855033 
      107.72951171875  0.022498362513647455 
      107.83548828125  0.04495726454318064 
      107.94146484375  0.07927070257303247 
      108.04744140625  0.12387019997476718 
      108.15341796875  0.17225039690381755 
      108.25939453125  0.21400085099243654 
      108.36537109375  0.2384425730802759 
      108.47134765625  0.2391379296782175 
      108.57732421875  0.21663490612268452 
      108.68330078125  0.17786130248693702 
      108.78927734375  0.1327722041695152 
      108.89525390625  0.09039612518321363 
      109.00123046875  0.05629905113137014 
      109.10720703125  0.032166402840345706 
      109.21318359375  0.01690627396346738 
      109.31916015624999  0.008195692110965843 
      109.42513671875  0.003673843688144566 
      109.53111328125  0.0015265697115523327 
      109.63708984375  5.893831126100744E-4 
      109.74306640625  2.1190943165158728E-4 
      109.84904296875  7.110905745889147E-5 
      109.95501953125  2.2317096586083498E-5 
      110.06099609374999  6.564049232095713E-6 
      110.16697265625  1.8129224919484772E-6 
      110.27294921875  4.7107377048121097E-7 
      110.37892578125  1.1548350786271787E-7 
      110.48490234375  2.8098751838731385E-8 
      110.59087890625  1.9978822771623542E-8 
      110.69685546875  1.190958332875227E-7 
      110.80283203125  8.294546173605105E-7 
      110.90880859375  4.953564681749852E-6 
      111.01478515625  2.528046044166561E-5 
      111.12076171875  1.1085231104880384E-4 
      111.22673828125  4.1990104254568586E-4 
      111.33271484375  0.0013811483893271122 
      111.43869140625  0.003964378298098029 
      111.54466796874999  0.009977184906019272 
      111.65064453125  0.02211605184999676 
      111.75662109375  0.043366923403187034 
      111.86259765625  0.07553818654112206 
      111.96857421874999  0.11734422775549178 
      112.07455078125  0.16319359858211654 
      112.18052734375  0.2039304047926655 
      112.28650390625  0.22978690043710723 
      112.39248046875  0.23426036072410322 
      112.49845703125  0.2167755627633316 
      112.60443359375  0.18264687555842463 
      112.71041015624999  0.14054144075481137 
      112.81638671875  0.0990458987063759 
      112.92236328125  0.06410774627121073 
      113.02833984375  0.0382105167908247 
      113.13431640625  0.02102649014890631 
      113.24029296875  0.010708627434036552 
      113.34626953125  0.005059599661045631 
      113.45224609375  0.002222840271721577 
      113.55822265625  9.100568091749122E-4 
      113.66419921875  3.4795320075372394E-4 
      113.77017578125  1.244962459262739E-4 
      113.87615234375  4.1767094167606325E-5 
      113.98212890625  1.3163888796570305E-5 
      114.08810546875  3.90487001585845E-6 
      114.19408203124999  1.0921297924860287E-6 
      114.30005859375  2.8849189314809375E-7 
      114.40603515625  7.209508889212805E-8 
      114.51201171875  1.7072108469308448E-8 
      114.61798828124999  3.836643385136942E-9 
      114.72396484375  8.195010317098054E-10 
      114.82994140625  1.6661377724414963E-10 
      114.93591796875  3.228825494563645E-11 
      115.04189453125  5.972272634127292E-12 
      115.14787109375  1.0557623741614359E-12 
      115.25384765625  1.785977919048072E-13 
      115.35982421874999  2.8947004157372414E-14 
      115.46580078125  4.500556243037067E-15 
      115.57177734375  6.719933147342751E-16 
      115.67775390624999  9.646847959614375E-17 
      115.78373046875  1.3329017363702148E-17 
      115.88970703125  1.7744308277631636E-18 
      115.99568359375  2.2782970027431733E-19 
      116.10166015624999  2.8241033622847364E-20 
      117.05544921875  3.0781623522254684E-19 
      117.16142578124999  9.098716560311801E-18 
      117.26740234374999  2.2296368938315653E-16 
      117.37337890625  4.560131976341184E-15 
      117.47935546875  7.834306078351615E-14 
      117.58533203125  1.137544409982171E-12 
      117.69130859375  1.404193733279826E-11 
      117.79728515625  1.4818714803208454E-10 
      117.90326171875  1.344147299150735E-9 
      118.00923828124999  1.0533281321886406E-8 
      118.11521484375  7.166279481438039E-8 
      118.22119140625  4.252847938233138E-7 
      118.32716796874999  2.2114626587478477E-6 
      118.43314453125  1.0119749667511693E-5 
      118.53912109375  4.09213916687112E-5 
      118.64509765625  1.468076649710509E-4 
      118.75107421875  4.6905400213493426E-4 
      118.85705078125  0.001339568897671972 
      118.96302734375  0.003431667063644781 
      119.06900390625  0.007912494274407841 
      119.17498046875  0.016474168074008182 
      119.28095703125  0.031069559848294032 
      119.38693359375  0.053237362304553525 
      119.49291015624999  0.08312029291717973 
      119.59888671875  0.11858212932911634 
      119.70486328125  0.15499618083404287 
      119.81083984374999  0.18609626785059508 
      119.91681640624999  0.205756663003065 
      120.02279296875  0.20999817826183048 
      120.12876953125  0.19830385841697576 
      120.23474609375  0.17364965728880846 
      120.34072265625  0.14131326912617437 
      120.44669921875  0.10709391271906468 
      120.55267578125  0.07573469660477421 
      120.65865234374999  0.050074756895170644 
      120.76462890625  0.03101370621768872 
      120.87060546875  0.01802556748006732 
      120.97658203124999  0.00984890621184213 
      121.08255859375  0.0050674438890975235 
      121.18853515625  0.002459267159396346 
      121.29451171875  0.0011275323028458475 
      121.40048828124999  4.891332734869992E-4 
      121.50646484375  2.0107003229928624E-4 
      121.61244140625  7.843607150037437E-5 
      121.71841796875  2.9076311016527732E-5 
      121.82439453125  1.0256646520004106E-5 
      121.93037109375  3.447334444093194E-6 
      122.03634765625  1.105414452268552E-6 
      122.14232421874999  3.385831628287063E-7 
      122.24830078125  9.917965375909987E-8 
      122.35427734375  2.78163182198548E-8 
      122.46025390624999  7.477978860280713E-9 
      122.56623046874999  1.9290766585621443E-9 
      122.67220703125  4.780292898215075E-10 
      122.77818359375  1.1390554598107866E-10 
      122.88416015625  2.612480765528724E-11 
      122.99013671875  5.772967973050545E-12 
      123.09611328125  1.2302427597301173E-12 
      123.20208984375  2.530604092229806E-13 
      123.30806640624999  5.029028572883312E-14 
      123.41404296875  9.663721043647304E-15 
      123.52001953125  1.797084041247012E-15 
      123.62599609374999  3.23675081612833E-16 
      123.73197265625  5.650805257861165E-17 
      123.83794921875  9.569856497607486E-18 
      123.94392578124999  1.5733275530642612E-18 
      124.04990234374999  2.512853990386658E-19 
      124.15587890625  3.901727193048694E-20 
      128.07701171875  1.725353580576519E-56 
 }; 
      \addlegendentry{}
      \label{}
      
        \addplot table[x=x,y=y] { 
      x  y 
      101.20607455922807  0.0459616833815542 
      101.8080156585258  0.04743813376722539 
      102.27107734476282  0.04839304797939822 
      102.70356584256345  0.049135258848334966 
      103.19737523484157  0.04979975445211217 
      103.87764132070127  0.05038775068720478 
      104.46292460121201  0.05058689095119182 
      104.84915361954383  0.0505637034558061 
      105.54968387235802  0.05021453640707879 
      105.95131237626133  0.04984128833934768 
      106.67843690128517  0.04886187392144015 
      107.02478176327975  0.04826557791431572 
      107.63769655273917  0.04702137737726421 
      108.30566934201036  0.045416146147137515 
      108.70846950143121  0.044336900804515966 
      109.21093508335979  0.042888210528317774 
      109.94456947833311  0.04060077992154888 
      110.26303426946761  0.039555454599464863 
      110.80786186110156  0.037709723966024004 
      111.36013692030873  0.03578279404619359 
      112.10819637057736  0.033118979852920886 
      112.5491866648476  0.0315369320577495 
      113.16475819011661  0.029334598306763698 
      113.61506663159136  0.027739490907371076 
      114.13378922943923  0.025930237224972594 
      114.65811718545409  0.024142528275577214 
      115.33836207482031  0.021900356239763268 
      115.82286024135003  0.02036534843587059 
      116.38719408729236  0.01865007771268235 
      116.76404081041699  0.017551217928570093 
      117.3391989416979  0.015949710319387897 
      117.97868100208282  0.014280179858777985 
      118.60628158866469  0.01275795911587771 
      119.01080592535217  0.011838221012779263 
      119.71324621062216  0.010354687925984556 
      120.22743193314574  0.00935853497680795 
      120.59226581667417  0.008696621635874216 
      121.32096696310151  0.007482363778496054 
      121.8175726348924  0.006733885859636989 
      122.2230781579411  0.006167972993910236 
      122.95160469466626  0.005247965184603101 
      123.25258472362239  0.004902249092094642 
      123.79231965933648  0.004329508197655877 
      124.56378394844741  0.003608912383493579 
      125.1379567172101  0.003141083243164034 
      125.66038939669657  0.002761582858579987 
      126.08275973671249  0.0024844003532584107 
      126.72179436659314  0.0021110890454771217 
      127.05929866911912  0.001934535374230523 
      127.74387081849605  0.0016159463169558852 
 }; 
  \legend{}
  %
 
  \end{axis}
  \end{tikzpicture}
  \begin{tikzpicture} 
  \begin{axis}[ 
  	cycle list name=canti,
  xlabel=$x$,
  ylabel=,
  grid,
  xmin=101,
  xmax=128.13,
  width = 0.5\textwidth,
  height = 5cm,
  scaled y ticks=false,
  ] 
        \addplot table[x=x,y=density] { 
      x  density 
      101.42390625  0.05613419421020152 
      102.27171875  0.05708013782437654 
      103.11953125  0.05716641811894432 
      103.96734375  0.05641560191163158 
      104.81515625  0.05488532633780539 
      105.66296875  0.052662509446822624 
      106.51078125  0.04985605473987906 
      107.35859375  0.04658881725025815 
      108.20640625  0.04298956826425256 
      109.05421875  0.03918559168350522 
      109.90203125  0.03529639177977291 
      110.74984375  0.03142881596406118 
      111.59765625  0.027673721271259912 
      112.44546875  0.024104158626746003 
      113.29328124999999  0.020774927222660323 
      114.14109375  0.01772326843931516 
      114.98890625  0.014970424784819159 
      115.83671875  0.012523779915940112 
      116.68453124999999  0.0103793137583678 
      117.53234375  0.008524143660365373 
      118.38015625  0.006938970090675105 
      119.22796875  0.005600296378931479 
      120.07578125  0.0044823407388703 
      120.92359375  0.0035586014408528757 
      121.77140625  0.0028030703428894915 
      122.61921874999999  0.0021911153214425492 
      123.46703124999999  0.0017000688268733835 
      124.31484375  0.001309568905729143 
      125.16265625  0.0010017020508342389 
      126.01046875  7.6099574254739E-4 
      126.85828125  5.743040314906986E-4 
      127.70609375  4.306232816533708E-4 
 }; 
      \addlegendentry{}
      \label{}
      \addplot table[x=x,y=density] { 
      x  density 
      101.42390625  0.03511926123505058 
      102.27171875  0.03917490653655301 
      103.11953125  0.0429822667819591 
      103.96734375  0.046409367042239726 
      104.81515625  0.04933617986319959 
      105.66296875  0.05166187313032996 
      106.51078125  0.05331061731477564 
      107.35859375  0.05423549939726783 
      108.20640625  0.05442029860907759 
      109.05421875  0.05387909914317116 
      109.90203125  0.05265391934455123 
      110.74984375  0.05081070288229046 
      111.59765625  0.04843413006694352 
      112.44546875  0.04562176060429789 
      113.29328124999999  0.04247801478396223 
      114.14109375  0.03910844720890811 
      114.98890625  0.03561467890340422 
      115.83671875  0.032090245115168295 
      116.68453124999999  0.02861750209128829 
      117.53234375  0.025265629139459983 
      118.38015625  0.022089671706223415 
      119.22796875  0.019130502634071185 
      120.07578125  0.016415534267104192 
      120.92359375  0.013959992736702988 
      121.77140625  0.011768564462958948 
      122.61921874999999  0.00983723922951143 
      123.46703124999999  0.00815519920317598 
      124.31484375  0.00670663421028284 
      125.16265625  0.005472396311979634 
      126.01046875  0.0044314380006627175 
      126.85828125  0.0035620059184675082 
      127.70609375  0.0028425845716931186 
 }; 
      \addlegendentry{}
      \label{}
      \addplot table[x=x,y=density] { 
      x  density 
      101.42390625  0.028606888309660627 
      102.27171875  0.03278853856150891 
      103.11953125  0.036933347944038755 
      103.96734375  0.040906333222904545 
      104.81515625  0.0445717300837547 
      105.66296875  0.047801265459659015 
      106.51078125  0.050482003262411194 
      107.35859375  0.052523072286987084 
      108.20640625  0.053860718374521235 
      109.05421875  0.05446131305740746 
      109.90203125  0.05432216989360285 
      110.74984375  0.05347023812454014 
      111.59765625  0.051958934869215385 
      112.44546875  0.04986352206401663 
      113.29328124999999  0.04727552154586563 
      114.14109375  0.044296688678703025 
      114.98890625  0.04103303717689868 
      115.83671875  0.03758933652247359 
      116.68453124999999  0.03406440316039596 
      117.53234375  0.030547392872064264 
      118.38015625  0.027115188477017476 
      119.22796875  0.023830875623843544 
      120.07578125  0.02074321764483784 
      120.92359375  0.017886982264852535 
      121.77140625  0.015283938992877399 
      122.61921874999999  0.012944334230180172 
      123.46703124999999  0.01086865763807571 
      124.31484375  0.009049533321550571 
      125.16265625  0.007473597926204706 
      126.01046875  0.00612326024586702 
      126.85828125  0.004978269612694284 
      127.70609375  0.004017050377048508 
 }; 
      \addlegendentry{}
      \label{}
      \addplot table[x=x,y=density] { 
      x  density 
      101.42390625  0.046622959053159295 
      102.27171875  0.050040748524967225 
      103.11953125  0.052829476493761476 
      103.96734375  0.0548880937822167 
      104.81515625  0.05614901643747536 
      105.66296875  0.05658138103197297 
      106.51078125  0.056191452356607524 
      107.35859375  0.05502033848530546 
      108.20640625  0.05313942221349578 
      109.05421875  0.05064409906150247 
      109.90203125  0.04764650718919425 
      110.74984375  0.04426794400915776 
      111.59765625  0.04063159961000288 
      112.44546875  0.03685611758071748 
      113.29328124999999  0.033050342304363566 
      114.14109375  0.029309450854900037 
      114.98890625  0.025712516629255094 
      115.83671875  0.022321425003157244 
      116.68453124999999  0.01918096699389238 
      117.53234375  0.016319877939357998 
      118.38015625  0.013752562726859537 
      119.22796875  0.01148125200919045 
      120.07578125  0.009498358160684223 
      120.92359375  0.007788837928791375 
      121.77140625  0.006332413768637828 
      122.61921874999999  0.00510555179538215 
      123.46703124999999  0.004083136732032071 
      124.31484375  0.00323982031047312 
      125.16265625  0.0025510478669858385 
      126.01046875  0.0019937880883709527 
      126.85828125  0.0015470035840834022 
      127.70609375  0.0011919062873637972 
 }; 
      \addlegendentry{}
      \label{}
      \addplot table[x=x,y=density] { 
      x  density 
      101.42390625  0.023531372824721276 
      102.27171875  0.027480473069116544 
      103.11953125  0.03153495717691926 
      103.96734375  0.03557796708910175 
      104.81515625  0.03948320378764772 
      105.66296875  0.043122078552752185 
      106.51078125  0.046371290253906884 
      107.35859375  0.04912016412726575 
      108.20640625  0.051277118703984416 
      109.05421875  0.052774729802007904 
      109.90203125  0.053573017113185704 
      110.74984375  0.053660767002672166 
      111.59765625  0.053054899529197945 
      112.44546875  0.05179806488085442 
      113.29328124999999  0.04995479569593939 
      114.14109375  0.04760663520058524 
      114.98890625  0.04484670233286357 
      115.83671875  0.04177414659325627 
      116.68453124999999  0.03848889534744964 
      117.53234375  0.03508701634924327 
      118.38015625  0.03165692144249255 
      119.22796875  0.028276536391788294 
      120.07578125  0.025011467331941244 
      120.92359375  0.021914114361143353 
      121.77140625  0.01902362212424335 
      122.61921874999999  0.016366517609648513 
      123.46703124999999  0.013957866036738582 
      124.31484375  0.011802773987398013 
      125.16265625  0.009898081043866495 
      126.01046875  0.008234102891619036 
      126.85828125  0.0067963159969399055 
      127.70609375  0.005566902954281317 
 }; 
      \addlegendentry{}
      \label{}
      \addplot table[x=x,y=density] { 
      x  density 
      101.42390625  0.03800293512655866 
      102.27171875  0.04131296090330445 
      103.11953125  0.04428929330312458 
      103.96734375  0.04683947260961886 
      104.81515625  0.04888509130197655 
      105.66296875  0.05036582152430735 
      106.51078125  0.05124228358551606 
      107.35859375  0.051497578309416855 
      108.20640625  0.05113742523306632 
      109.05421875  0.05018896654951884 
      109.90203125  0.04869840106406139 
      110.74984375  0.04672769359654434 
      111.59765625  0.04435065706932393 
      112.44546875  0.041648724751326485 
      113.29328124999999  0.03870672031055823 
      114.14109375  0.03560889807648231 
      114.98890625  0.032435471965448145 
      115.83671875  0.0292597866299991 
      116.68453124999999  0.026146216270278817 
      117.53234375  0.02314881199209818 
      118.38015625  0.020310662888653616 
      119.22796875  0.017663892607564992 
      120.07578125  0.01523018362868756 
      120.92359375  0.013021705746468626 
      121.77140625  0.011042321938321404 
      122.61921874999999  0.009288951637232961 
      123.46703124999999  0.007752985687379145 
      124.31484375  0.006421666147086738 
      125.16265625  0.005279365039974182 
      126.01046875  0.004308716993813423 
      126.85828125  0.00349157982873516 
      127.70609375  0.002809813494408022 
 }; 
   \addplot table[x=x,y=y] { 
      x  y 
      101.20607455922807  0.0459616833815542 
      101.8080156585258  0.04743813376722539 
      102.27107734476282  0.04839304797939822 
      102.70356584256345  0.049135258848334966 
      103.19737523484157  0.04979975445211217 
      103.87764132070127  0.05038775068720478 
      104.46292460121201  0.05058689095119182 
      104.84915361954383  0.0505637034558061 
      105.54968387235802  0.05021453640707879 
      105.95131237626133  0.04984128833934768 
      106.67843690128517  0.04886187392144015 
      107.02478176327975  0.04826557791431572 
      107.63769655273917  0.04702137737726421 
      108.30566934201036  0.045416146147137515 
      108.70846950143121  0.044336900804515966 
      109.21093508335979  0.042888210528317774 
      109.94456947833311  0.04060077992154888 
      110.26303426946761  0.039555454599464863 
      110.80786186110156  0.037709723966024004 
      111.36013692030873  0.03578279404619359 
      112.10819637057736  0.033118979852920886 
      112.5491866648476  0.0315369320577495 
      113.16475819011661  0.029334598306763698 
      113.61506663159136  0.027739490907371076 
      114.13378922943923  0.025930237224972594 
      114.65811718545409  0.024142528275577214 
      115.33836207482031  0.021900356239763268 
      115.82286024135003  0.02036534843587059 
      116.38719408729236  0.01865007771268235 
      116.76404081041699  0.017551217928570093 
      117.3391989416979  0.015949710319387897 
      117.97868100208282  0.014280179858777985 
      118.60628158866469  0.01275795911587771 
      119.01080592535217  0.011838221012779263 
      119.71324621062216  0.010354687925984556 
      120.22743193314574  0.00935853497680795 
      120.59226581667417  0.008696621635874216 
      121.32096696310151  0.007482363778496054 
      121.8175726348924  0.006733885859636989 
      122.2230781579411  0.006167972993910236 
      122.95160469466626  0.005247965184603101 
      123.25258472362239  0.004902249092094642 
      123.79231965933648  0.004329508197655877 
      124.56378394844741  0.003608912383493579 
      125.1379567172101  0.003141083243164034 
      125.66038939669657  0.002761582858579987 
      126.08275973671249  0.0024844003532584107 
      126.72179436659314  0.0021110890454771217 
      127.05929866911912  0.001934535374230523 
      127.74387081849605  0.0016159463169558852 
 }; 
 \legend{}

  \end{axis}
  \end{tikzpicture}
\end{center}
\caption{Five realizations of the density estimator (blue), their average (orange, dashed in b/w),
 and  the true density  (thick black)  for the sequential CDE (left) and the bridge CDE (right),
 for the Asian option example.}
\label{fig:asian-cde}
\end{figure}
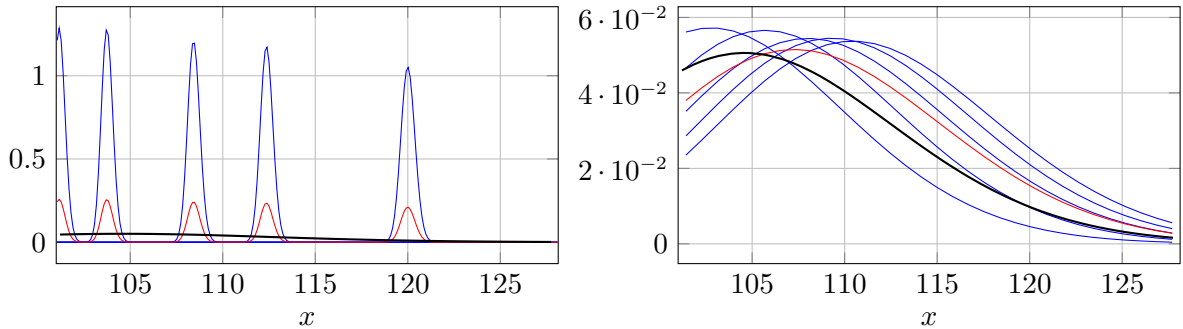

\new{The method introduced here works not only for a Brownian motion, but for more
general L\'evy processes as well.  Some popular models in finance use a L\'evy subordinator process to 
produce a random clock speed to model stochastic volatility, and a geometric Brownian process 
that evolves at that random speed.  This includes the \emph{variance-gamma} (VG) and 
the \emph{normal inverse Gaussian} processes, for example. 
For these models, we can generate the subordinator as usual using a bridge method
to obtain the random times at which the Brownian process is evaluated,
and then apply the method that we just described to the resulting Brownian process.
For the VG process, a more effective alternative could be to use the \emph{double gamma bridge sampling}
method described in \cite{fAVR06a} and hide the first variable as we have done here.
Yet another approach would be to do a monotone mapping between the L\'evy process and a Brownian motion
as explained in \cite{fLEC08a}, estimate the density in the Brownian representation as we did here,
and transform this density to the original L\'evy process via the change of variable that corresponds 
to the mapping.}


\iflong \color{violet}  

\subsection{Estimating a quantile with a confidence interval}
\label{sec:quantile}


For $0 < q < 1$, the $q$-quantile of the distribution of $X$ is defined as 
$\xi_q = F^{-1}(q) = \inf\{x : F(x)\ge q\}$.
Given $n$ i.i.d.\ observations of $X$, a standard (consistent) estimator of $\xi_q$ is 
the $q$-quantile of the empirical distribution, defined 
as $\hat\xi_{q,n} = X_{(\lceil nq\rceil)}$, where $X_{(1)},\dots,X_{(n)}$ are the 
$n$ observations sorted in increasing order (the order statistics).
We assume that the density $f(x)$ is positive and continuously differentiable 
in a neighborhood of $\xi_q$.  Then we have the central limit theorem (CLT): 
\[
  \sqrt{n} (\hat\xi_{q,n} - \xi_q) / \sigma_\xi  \Rightarrow \cN(0,1) \qquad \mbox{ for } n\to\infty, 
\] 
where $\sigma^2_\xi = q(1-q)/f^2(\xi_q)$ \citep{tSER80a}.
This provides a way to compute a confidence interval on $\xi_q$, but requires the estimation
of $f(\xi_q)$, which is generally difficult.  Some approaches for doing this include 
finite differences with the empirical cdf, batching, and sectioning 
\citep{sASM07a,tNAK14a,tNAK14b}.

In our setting, one can do better by taking the $q$-quantile $\hat\xi_{\cmc,q,n}$
of the conditional cdf
\[
  \hat F_{\cmc,n}(x)  = \frac{1}{n} \sum_{i=1}^n F(x\mid \cG^{(i)}).
\]
That is, $\hat\xi_{\cmc,q,n} = \inf\{x : \hat F_{\cmc,n}(x) \ge q\}$.
This idea was already suggested by \cite{tNAK14b}, who pointed out that this estimator obeys
a CLT just like $\hat\xi_{q,n}$, but with the variance constant $\sigma^2_\xi$
replaced by $\sigma^2_{\cmc,\xi} = \Var[F(\xi_q \mid \cG)]/f^2(\xi_q) \le \sigma^2_\xi$.
This is an improvement on the quantile estimator itself.
Our CDE approach also provides an improved estimator of the density $f(\xi_q)$ which 
appears in the variance expression.
We estimate $f(\xi_q)$ by $\hat f_{\cde,n} (\hat\xi_{\cmc,q,n})$.  
This provides a more accurate confidence interval of $\xi_q$.

Further improvements on the variances of both the quantile and density 
estimators can be obtained by using RQMC to generate the realizations $\cG^{(i)}$.
In particular, if $\tilde g(\xi_q, \bu) = F(\xi_q \mid \cG)$ is a sufficiently smooth function 
of $\bu$, $\Var[\hat\xi_{\cmc,q,n}]$ can converge at a faster rate than $\cO(n^{-1})$.
When using RQMC with $n_r$ randomizations to estimate a quantile, the quantile 
estimator will be the empirical quantile of all the $n_r\times n$ observations.
\hpierre{How do we estimate the variance of this quantile estimator?  Perhaps not obvious.}
\hpierre{We could have a short numerical example to illustrate this, based on an
 example which is already in the paper, e.g., the cantilever example. 
 We can estimate the 0.95 quantile and the density at that quantile.}

A related quantity is the \emph{expected shortfall}, defined as 
$c_q = \EE[X\mid X > \xi_q] = \xi_q - \EE[(\xi_q - X)^+] / q$ 
which is often estimated by its empirical version \citep{sHON14a}
\[
   \hat c_{q,n} = \hat\xi_{q,n} - \frac{1}{nq} \sum_{i=1}^n (\hat\xi_{q,n} - X_i)^+.
\]
This estimator obeys the CLT
$
  \sqrt{n} (\hat c_{q,n} - c_q) / \sigma_c  \Rightarrow \cN(0,1)
$ 
for $n\to\infty$, where $\sigma^2_c = \Var[(\xi_q - X)^+]/q^2$, if this variance is finite \citep{sHON14a}.
By improving the quantile estimator, CDE+RQMC can also improve the expected shortfall estimator
as well as the estimator of the variance constant $\sigma^2_c$ and the quality of 
confidence intervals on $c_q$. We leave this as a topic for future work.


\hpierre{We should have a very short discussion here of related papers on VaR and 
quantile sensitivity estimation:   \cite{sHE19a,sHON09a,sHON14a,vNAK14a}.   
Also some papers on estimating the sensitivity of conditional value at risk (CVaR) 
with respect to a parameter $\theta$.  See for example \cite{sHON09b,sHON14a,sHON14b}.
I think it is important to cite some of the papers of Jeff Hong and make appropriate links,
with our future work.}

\fi  \color{black}  

\iflong\else  
\subsection{More examples}

Additional examples are given in the Online Supplement. 
In the first one, $X$ is a sum of independent normal random variables, with known density,
and the purpose is to see how each estimator behaves as a function of the dimension
(the number of summands) and of the relative variance of the one we hide.
The second one is a (real-life) six-dimensional example in which $X$ is the buckling strength 
of a steel plate.
The third one is a multicomponent system in which each component fails at a certain random time, 
and we want to estimate the density of the failure time of the system.
In the fourth one, we explain how accurate density estimation is useful to compute a confidence 
interval on a quantile or on the expected shortfall.  This, alone, has many applications.
\fi

\section{Summary and Guidelines}
\label{sec:guidelines}

Here we provide a summary of the main conditions and some guidelines for applying the method.
The primary task is to select the information $\cG$ on which we condition,
or equivalently, to decide what we hide.
The main constraint is that $\cG$ must be selected in a way that Assumption 1 is satisfied,
at least in the region where we want to estimate the density.
The key condition for this is that the conditional density must be well-defined in that region.
In particular, the conditional CDF should never have jumps. 
A second requirement is that the conditional density can be calculated efficiently for all
realizations of $\cG$ (almost surely).
There are often many possible choices of $\cG$ for which these conditions are satisfied.
The set of admissible choices is highly problem-dependent; it depends on the model and also on 
the variable $X$ of interest.

In many cases, one must hide more than just a single input random variable. 
The hidden information can also be dynamically selected, in the sense that it may depend on the 
sample realization (for example, if $X$ is the maximum of several independent random variables, 
one may generate all the variables and hide the maximum).  
Sometimes, it becomes much easier to apply the method after making an appropriate multivariate 
change of variable.  This may require creativity, as we have shown in our examples.

When there are many choices for $\cG$, finding the optimal one 
(e.g., to minimize the work-normalized MISE) can be difficult in general, 
because the MISE depends on many factors, but just finding a good one is usually much easier and sufficient. 
For comparable computing times,
the best choices are often those for which the variance of the conditional density is the largest.
As a rule of thumb, it is usually better to condition on lower-variance information and hide variables
having a large variance contribution.  
The optimal choice also depends (in general) on the point $x$ at which we want to estimate the density.
In principle, one could optimize by using different conditioning for different intervals, 
but it is usually not worth the additional complications.
When several good choices of $\cG$ are available, selecting a few of them and taking a convex combination 
of the corresponding estimators can provide a more robust CDE than selecting only one $\cG$.

When we want to combine the CDE with RQMC, additional properties come into play:
we want to select $\cG$ and also the formulation of the estimator as a function $\tilde g'$ of the vector $\bU$
of underlying uniform random numbers in a way that the assumption of Proposition 3 is satisfied
(if possible) and the variation of this function $\tilde g'$ is not too large.
Proving these conditions in practice may be difficult, but one can always try RQMC empirically.
Experience shows that it can often reduce the variance significantly, 
in particular when the effective dimension of $\tilde g'$ is small. 
This was illustrated in our examples.

\section{Conclusion}
\label{sec:conclusion}

We have examined a simple and very effective approach for estimating the density of a 
random variable generated by simulation from a stochastic model, 
by using a computable conditional density.
The resulting CDE is unbiased and its MISE
converges faster than for other popular density estimators such as the KDE.
We have also shown how to further reduce the IV, and even improve its convergence rate,
by combining the CDE with RQMC.
Our numerical examples show that this combination can be very efficient.
It sometimes reduces the MISE by factors over a million.
The CDE approach also outperforms the recently proposed GLRDE method,
and CDE+RQMC outperforms both GLRDE+RQMC and KDE+RQMC, in all our examples.
RQMC tends to bring a larger improvement to the CDE than the KDE or GLRDE
because the estimator usually has less variation as a function of the underlying uniforms.

\new{The examples in the paper were selected to provide insight on key issues.
We tried to avoid unnecessary complications in the models. 
But it would not be too difficult to derive CDEs for larger and more complicated versions 
of these models. We outlined some possibilities in the text.}
Suggested future work includes \new{experimenting this methodology in more complicated applications,}
designing and exploring different types of conditioning,
and perhaps adapting the Monte Carlo sampling strategies to make the method more effective 
\new{for specific applications} 
(e.g., by changing the way $X$ is defined in terms of the basic input random variates).
Its application to quantile and expected shortfall estimation also deserves further study.

\ACKNOWLEDGMENT{%
\hpierre{This definitely needs to be shortened.  Max 4-5 lines in total.}
This work has been supported by an IVADO Research Grant, an NSERC-Canada Discovery Grant, 
a Canada Research Chair, and an Inria International Chair, to P. L'Ecuyer.
F. Puchhammer was also supported by Spanish and Basque governments fundings through BCAM (ERDF, ESF, SEV-2017-0718, PID2019-108111RB-I00, PID2019-104927GB-C22, BERC 2018e2021, EXP. 2019/00432, ELKARTEK KK-2020/00049), and the computing infrastructure of i2BASQUE academic network and IZO-SGI SGIker (UPV).
%
%
The reviewers, the editors, and Julien Keutchayan gave several comments that improved the paper.
}


\end{document}



\RUNAUTHOR{L'Ecuyer, Puchhammer, Ben Abdellah}

\RUNTITLE{MC and QMC Density Estimation via Conditioning}

\TITLE{Monte Carlo and Quasi-Monte Carlo Density Estimation via Conditioning}

\ARTICLEAUTHORS{%
 \AUTHOR{Pierre L'Ecuyer}
 \AFF{D\'{e}partement d'Informatique et de Recherche Op\'{e}rationnelle, 
 Pavillon Aisenstadt, Universit\'{e} de Montr\'{e}al, C.P. 6128,
 Succ. Centre-Ville, Montr\'{e}al, Qu\'{e}bec, Canada H3C 3J7, \EMAIL{lecuyer@iro.umontreal.ca}}
 
  \AUTHOR{Florian Puchhammer}
 \AFF{ Basque Center for Applied Mathematics, Alameda de Mazarredo 14, 48009 Bilbao, Basque Country, Spain; and 
  D\'{e}partement d'Informatique et de Recherche Op\'{e}rationnelle, 
  Universit\'{e} de Montr\'{e}al,
 \EMAIL{fpuchhammer@bcamath.org}}
 
  \AUTHOR{Amal Ben Abdellah}
 \AFF{D\'{e}partement d'Informatique et de Recherche Op\'{e}rationnelle, 
 Pavillon Aisenstadt, Universit\'{e} de Montr\'{e}al, C.P. 6128,
 Succ. Centre-Ville, Montr\'{e}al, Qu\'{e}bec, Canada H3C 3J7, \EMAIL{amal.ben.abdellah@umontreal.ca}}
}

\maketitle

%
%
%
%
%

\begin{APPENDICES}

\centerline{\LARGE\bf Online Supplement}
\bigskip

This Supplement contains additional examples and details for which there was not
enough space in the main paper.

In Appendix~\ref{sec:sum-rqmc}, we show with simple examples how one can prove that the 
HK variation of the CDE is bounded uniformly over the interval $[a,b]$ of interest.  
When this can be done, it proves that the MISE for the CDE with good RQMC points 
converges as $\cO(n^{-2+\epsilon})$. 

Appendix~\ref{sec:more-examples} provides additional examples showing how CDEs can be 
constructed, sometimes in non-trivial ways that are adapted to the problem at hand.
In Section~\ref{sec:normals}, we consider the very simple example of a 
sum of independent normal random variables, for which the density is known,
and the purpose is to see how each estimator behaves as a function of the dimension
(the number of summands) and of the relative variance of the one we hide.
In Section~\ref{sec:buckling}, we consider a six-dimensional example taken from \cite{tSCH16a}.
In Section~\ref{sec:network-failure}, we consider a multicomponent system in which each 
component fails at a certain random time, and we want to estimate the density of the 
failure time of the system.
In Section~\ref{sec:quantile}, we explain briefly how accurate density estimation is useful 
for computing a confidence interval on a quantile or on the expected shortfall.

Section~\ref{sec:more-figures} provides additional figures for examples in the paper.

\bigskip
\section{Proving bounded HK variation for the CDE: some simple illustrations}
\label{sec:sum-rqmc}

Here we show how the HK variation of $g \equiv \tilde g'(x,\cdot)$ can be bounded 
uniformly in $x\in[a,b]$ in our CDE setting, 
for Examples~\ref{ex:sum-asm18} to \ref{ex:sum2normal} of the paper.
\hpierre{To do:  Must also be bounded uniformly in $x$ over $[a,b]$.}

\begin{example} \rm
\label{ex:sum-rqmc}
Consider a sum of random variables as in Example~\ref{ex:sum-asm18}, 
with $\cG = \cG_{-k}$ summarized by the single real number $S_{-k}$. 
We have $F(x\mid \cG) = F_k(x-S_{-k})$ and $f(x\mid \cG) = f_k(x-S_{-k})$.
Without loss of generality, 
let $k = d$.
Suppose that each $Y_j$ is generated by inversion from $U_j \sim \cU(0,1)$, 
so $Y_j = F_j^{-1}(U_j)$ and $S_{-d} = F_1^{-1}(U_1) + \cdots + F_s^{-1}(U_s)$ with $s = d-1$.
This gives $\tilde g(x,\bU) = F_d(x-S_{-d}) = F_d(x - F_1^{-1}(U_1) - \cdots - F_s^{-1}(U_s))$ and 
$\tilde g'(x,\bU) = f_d(x-S_{-d}) = f_d(x - F_1^{-1}(U_1) - \cdots - F_s^{-1}(U_s))$.
The partial derivatives of this last function are
\hpierre{Well, here, with the anchor at 1, we have terms of the form $F_j^{-1}(1)$ in $S_{-d}$,
 for $j\not\in \frakv$.  For the normal distribution, these terms are infinite and 
 then the density $f_d$ is zero at these points!  Anything to add on this?}
\hflorian{What we could do is to show this formula without the anchor (this only affects the left-hand side) and  maybe add a sentence what happens for distributions that are supported on $\RR$.}
%
\[
  \tilde g'_{\frakv}(x, \bU_{\frakv},\bone) 
   = f^{(|\frakv|)}_d (x-S_{-d}) \prod_{j\in\frakv} \frac{\partial (F_j^{-1}(U_j))}{\partial U_j}.
\]
So the functions $F_j^{-1}$ must be differentiable over $(0,1)$ for $j=1,\dots,d-1$, 
the density $f_d$ must be $s$ times differentiable, and the integral of 
$|\tilde g'_{\frakv}(x, \bu_{\frakv},\bone)|$ with respect to $\bu_{\frakv}$ must be 
bounded uniformly in $x\in [a,b]$.
Under these conditions, the HK variation is bounded uniformly in $x$ over $[a,b]$.

For Example~\ref{ex:sum2unif}, with $\cG = \cG_{-2}$ and $Y_1 = U_1$,
we have $\tilde g'(x,\bu) = \tilde g'(x,U_1) = \II[U_1 \le x \le \epsilon + U_1]/\epsilon
 = \II[x-\epsilon \le U_1 \le x]/\epsilon$.   
This function is not continuous, but its HK variation (not given by (\ref{eq:HK}) in this case)
is $2/\epsilon < \infty$, because it is piecewise constant with only two jumps,
each one of size $1/\epsilon$.  Thus, the HK variation is unbounded when $\epsilon\to 0$,
but it is finite for any fixed $\epsilon$, independently of $x$.
The behavior with $\cG = \cG_{-1}$ is similar and the HK variation is 2 in that case,
which is much better.

For Example~\ref{ex:sum2normal}, if $\cG = \cG_{-2}$,  
we have $Y_1 = \sigma_1 \Phi^{-1}(U_1)$ where $U_1\sim \cU(0,1)$.
Then, $F(x\mid \cG_{-2}) = F_2(x-Y_1) = \Phi((x - Y_1)/\sigma_2)$ and 
$f(x\mid \cG_{-2}) = \phi((x - \sigma_1 \Phi^{-1}(U_1))/\sigma_2)/ \sigma_2 = \tilde{g}'(x,U_1)$.
Taking the derivative with respect to $u$ and noting that $\d \Phi^{-1}(u)/\d u = 1/(\phi(\Phi^{-1}(u)))$ yields
\[ 
 \tilde{g}'_{\frakv}(x,u) = \frac{\phi'((x - \sigma_1 \Phi^{-1}(u))/ \sigma_2) \,\sigma_1}
                                 {\sigma_2^2 \phi(\Phi^{-1}(u))}
\]
for $\frakv = \{1\} = \cS$ (the only subset in this case).
Integrating this with respect to $u$ by making the change of variable $z = \Phi^{-1}(u)$ gives
\[ 
  \int_0^1 \tilde{g}'_{\frakv}(x,u)\d u
	 = \frac{\sigma_1} {\sigma_2^2} \int_{-\infty}^\infty |\phi'((x - \sigma_1 z) /\sigma_2)| \, \d z,
\]
which is bounded uniformly in $x$, 
because $|\phi'(\cdot)|$ is bounded by $\phi(\cdot)$ multiplied by the absolute value 
of a polynomial of degree 1.  So the HK variation is bounded uniformly in $x$.
\hflorian{Actually, it's pretty simple in this case: $|\phi'(y)| = |z| \phi(y)$.}
\hpierre{In the Assumption, the bound must be uniform in $x$. Is it?}
\hflorian{Yes: 1) linear change of variables gets rid of $x$. 
  2) What remains is linear in $\EE[|Y|]$ for a standard normal $Y$.}
\end{example}

\bigskip
\section{Additional examples}
\label{sec:more-examples}

\subsection{A sum of normals}
\label{sec:normals}

We start with a very simple example in which the density $f$ is known beforehand,
so there is no real need to estimate it, but this type of example is very convenient 
for testing the performance of various density estimators. 
Let $Z_1,\dots,Z_d$ be independent standard normal random variables, i.e., with mean 0 and variance 1,
and define 
\[
  X = (a_1 Z_1 + \cdots + a_d Z_d)/\sigma, \quad\mbox{ where }\quad 
	     \sigma^2 = a_1^2 + \cdots + a_d^2.
\]
Then $X$ is also standard normal, with density $f(x) = \phi(x) \eqdef \exp(-x^2/2)/\sqrt{2\pi}$ 
and cdf $\PP[X\le x] = \Phi(x)$ for $x\in\RR$.  The term $a_j Z_j$ in the sum has variance $a_j^2$.
We pretend we do not know this and we estimate $f(x)$ over the interval $[-2,2]$,
which contains slightly more than 95\% of the density.
We also tried larger intervals, such as $[-5,5]$, and the IVs for the CDE were almost the same.
\hpierre{What if the interval is larger, e.g., $[-5,5]$?  For the KDE it created a difficulty,
 because the bandwidth $h$ should be much larger in areas where the density is much smaller,
 such as in $[3,5]$.  But for CMC there is no $h$ and no such problem when we take a larger interval.
 I think this is a good point to show and discuss briefly.}

To construct the CDE, we define $\cG_{-k}$ as in Example~\ref{ex:sum-asm18}, for any $k=1,\dots,d$.
That is, we hide $Z_k$ and estimate the cdf by 
\[
  F(x\mid \cG_{-k}) = \PP\left[\left. a_k Z_k \le x \sigma - \sum_{j=1,\, j\not=k}^d a_j Z_j \right| \cG_{-k} \right]
	               = \Phi\left(\frac{x \sigma}{a_k} - \frac{1}{a_k} \sum_{j=1,\, j\not=k}^d a_j Z_j \right).
\]
The CDE becomes
\[
  f(x\mid \cG_{-k}) 
	 = \phi\left(\frac{x \sigma}{a_k} - \frac{1}{a_k} \sum_{j=1,\, j\not=k}^d a_j Z_j\right) \frac{\sigma}{a_k}
	 = \phi\left(\frac{x \sigma}{a_k} - \frac{1}{a_k} \sum_{j=1,\, j\not=k}^d a_j \Phi^{-1}(U_j)\right) \frac{\sigma}{a_k}
	 \eqdef \tilde g'(x,\bU)
\]
for $x\in\RR$, where $\bU = (U_1,\dots,U_{k-1},U_{k+1},\dots, U_d)$, $Z_j = \Phi^{-1}(U_j)$, 
and the $U_j$ are independent $\cU(0,1)$ random variables.
Assumption~\ref{ass:cmc-dct} is easily verified, so this CDE is unbiased.

For CMC+MC (independent sampling), we get an exact formula for the variance of the CDE
from Example~\ref{ex:sum2normal}, by taking in that example 
$Y_2 = a_k Z_k / \sigma$ and $Y_1 = X - Y_2$,
whose variances are $\sigma_2^2 = (a_k/\sigma)^2$ and $\sigma_1^2 = 1 - \sigma_2^2$, and
plugging these values into (\ref{eq:sum2normal-exact-var}).
\hpierre{Please verify this carefully, and verify that the MC empirical results agree with this formula.}
\hflorian{I computed them. The largest deviation in the $e19$ was 0.2 (for $d=5$) in the $a_k=1$ case and 0.1 in the weighted case. }%
\hamal{I tried another parameters of lattice it's gives me  $\hat\nu$=1.14 and e19 = 27.66 . 
 the variance increase from $ n=2^{14} $ (23.34) until  $ n=2^{18} (30.877) $ and after it's starts decreasing.}%
\hpierre{
\[
  \Var[\hat f_n(x)] 
  = \frac{1} {n \sigma_2^2 \sqrt{2\pi(1+2\sigma_1^2/\sigma_2^2)}} 
	     \phi\left( \sqrt{2} x / \sqrt{\sigma_2^2 + 2\sigma_1^2} \right) - \phi^2(x)/n.
\]}%
\hpierre{Thus, for MC, we could compute the exact variance instead of making experiments.}%
\hpierre{For RQMC, we also need to prove that $V_{\rm HK}(\tilde g'(x,\cdot) < \infty$ for any $x$.
Note that $\lim_{U_j\to 1} \tilde g'(x,\bU) = 0$ for any $j\not=k$.
I think we should be able to prove that 
$\tilde g'_{\frakv}(x,\bU_{\frakv},\bone) = 0$ when $|\frakv| < s = d-1$.
Then the only nonzero term in the HK variation is the one with $\frakv = \cS$.
We need to show that this term (the integral of the derivative of $\tilde g'(x,\bU)$ 
with respect to all coordinates of $\bU$) is finite for each $x$.}%
%
With the same argument as in the second part of Example~\ref{ex:sum-rqmc}\iflong, 
\else\ in the supplement, \fi    
we can show that $V_{\rm HK}(\tilde g'(x,\cdot)) < \infty$, uniformly in $x$ over any
bounded interval $[a,b]$, so Proposition~\ref{prop:hk-cde} applies.
We expect to observe this empirically. 
\hpierre{Since the function $\Phi^{-1}$ is infinitely differentiable, and the derivatives
of all orders are integrable, it follows from Example~\ref{ex:sum-rqmc} 
that the HK variation of $\tilde g'(x,\bu)$ is finite.}

For the GLRDE, with $Y_j = Z_j a_j/\sigma \sim \cN(0,a_j^2/\sigma^2)$,
we obtain $\partial (\log f_j(y_j))/\partial y_j = - y_j \sigma^2/a_j^2$,
$h_j(y_j)=1$, $h_{jj}(y_j)=0$, and then $\Psi_j = - Y_j \sigma^2/a_j^2 = - Z_j \sigma/a_j$.
Note that we could also replace $Y_j$ by $Z_j$ and $f_j$ by $\phi_j$ (the standard normal density),
which would give $\partial (\log \phi_j(z_j))/\partial z_j = - z_j$, 
$h_j(z_j) = a_j/\sigma$, $h_{jj}(y_j)=0$, and again $\Psi_j = - Z_j \sigma/a_j$.

In our first experiment, we take $a_j=1$ for all $j$, and $k=d$.
By symmetry, the true IV is the same for any other $k$.
Table~\ref{tab:normals1} reports the estimated rate $\hat\nu$ and the estimated value of e19 $ =-\log_2(\IV)$
for $n = 2^{19}$, for various values of $d$ and sampling methods.
The rows marked CDE-1 give the results for $k=d$, while those labeled CDE-Avg 
are for a convex combination (\ref{eq:convex-comb1}) with equal weights $\beta_{\ell} = 1/d$ 
for all $\ell=k-1$, after computing the CDE for each $k$ from the same simulations.

\begin{table}[!htbp]
\caption{Values of $\hat\nu$ and e19 for a CDE, a convex combination of CDEs, a GLRDE, and a KDE,
		  for a sum of $d=k$ normals with $a_j=1$, over $[-2,2]$.}
 	\small
  \label{tab:normals1}
  \centering
  \begin{tabular}{|c|l| c c c c c|c c c c c|}
   \cline{3-12}
  \multicolumn{2}{l|}{} &\multicolumn{5}{c|}{$\hat\nu$}	& \multicolumn{5}{c|}{e19}\\
   \cline{3-12}
	\multicolumn{2}{l|}{}	& $d=2$	& $d=3$	& $d=5$	& $d=10$ & $d=20$	& $d=2$	& $d=3$	& $d=5$	& $d=10$ & $d=20$\\
	\hline
%
	\multirow{4}{*}{CDE-1} 
		& MC				  & 0.99 & 0.98	& 1.02	& 1.00	& 1.02		& 22.1	& 21.4	& 20.8	& 19.8	& 19.2\\
		& Lat+s				& 2.83 & 2.00	& 1.85	& 1.40	& 1.04		& 52.3	& 39.8	& 32.1	& 23.6	& 19.7\\
		& Lat+s+b			& 2.69 & 2.11	& 1.69	& 1.14	& 1.05		& 50.5	& 41.5	& 31.1	& 21.8	& 20.0\\
		& Sob+LMS			& 2.62 & 2.10	& 1.81	& 1.04	& 1.04		& 49.3	& 40.7	& 31.1	& 21.3	& 19.7\\
%
	\hline
%
	\multirow{4}{*}{CDE-avg}
		& MC				  & 1.06 & 0.92	& 1.03	& 1.01	& 1.01		& 23.4	& 22.1	& 21.6	& 20.6	& 19.8\\
		& Lat+s				& 2.79 & 1.84	& 1.33	& 1.19	& 1.05		& 53.3	& 39.8	& 32.2	& 23.0	& 20.6\\
		& Lat+s+b			& 2.65 & 1.90	& 1.71	& 1.05	& 1.08		& 51.6	& 41.4	& 32.3  & 23.4  & 21.3\\
		& Sob+LMS			& 2.60 & 2.10	& 1.92	& 1.02	& 1.03   	& 49.8	& 42.0	& 33.0	& 22.7	& 20.5\\
		\hline
%
	\multirow{4}{*}{GLRDE} 
		& MC				& 0.98 & 0.95   & 1.03  & 1.05  & 1.00      & 17.0  & 16.1  & 15.9  & 14.9  & 14.1\\
		& Lat+s				& 1.51 & 1.56   & 1.45  & 0.94  & 1.06      & 28.2  & 24.9  & 22.1  & 17.8  & 17.2\\
		& Lat+s+b			& 1.49 & 1.41   & 1.05  & 1.06  & 1.04      & 27.3  & 23.9  & 20.4  & 18.8  & 17.6\\
		& Sob+LMS			& 1.49 & 1.33   & 1.15  & 0.99  & 1.16      & 27.5  & 24.0  & 21.0  & 18.3  & 17.4\\
		\hline
\color{black}
%
	\multirow{4}{*}{KDE} 
		& MC				  & 0.79 & 0.80	& 0.76	& 0.75	& 0.77		& 17.0	& 17.0	& 16.9	& 16.9	& 17.0\\	
		& Lat+s				& 1.08 & 1.39	& 0.92	& 0.97	& 0.76		& 25.1	& 22.4	& 19.4	& 18.2	& 17.4\\
		& Lat+s+b			& 1.23 & 0.94	& 0.72	& 0.73	& 0.74		& 24.1	& 20.1	& 18.1	& 17.3	& 17.2\\
		& Sob+LMS			& 1.18 & 0.98	& 0.83	& 0.74	& 0.77		& 24.4	& 20.8	& 17.9	& 17.2	& 17.1\\
%
	\hline
  \end{tabular}
 \end{table}

\begin{table}[!htbp]
 \caption{Values of $\hat\nu$ and e19 with a CDE for selected choices of $\cG_{-k}$,
   for a linear combination of $d=11$ normals with $a_j^2 =2^{1-j}$.}
  \label{tab:normals2}
  \centering
  \begin{tabular}{|l| c c c c|c c c c|}
     \cline{2-9}
  \multicolumn{1}{l|}{} &\multicolumn{4}{c|}{$\hat\nu$}	& \multicolumn{4}{c|}{e19}\\
     \cline{2-9}
  \multicolumn{1}{l|}{}	& $k=1$	& $k=2$	& $k=5$	& $k=11$	& $k=1$	& $k=2$	& $k=5$	& $k=11$\\
	\hline
	MC				& 1.00 & 1.02	& 1.01	& 1.00		& 22.2	& 21.0	& 18.8	& 15.5\\
	Lat+s				& 1.43 & 1.48	& 1.34	& 1.04		& 30.3	& 28.5	& 22.8	& 15.6	\\
	Lat+s+b				& 1.57 & 1.65	& 1.28	& 1.02		& 33.5	& 30.8	& 22.1	& 15.6	\\
	Sob+LMS				& 1.78 & 1.56	& 1.21	& 1.02		& 34.1	& 30.4	& 21.7	& 15.7\\
	\hline
  \end{tabular}
 \end{table}
 \hflorian{Is there a reason why we display one digit more than for the $a_j=1$, 
   the Buckling, and the Cantilever example?}
%

For MC, the rates $\hat\nu$ agree with the (known) exact asymptotic rates of
$\nu=1$ for the CDE and GLRDE, and $\nu=0.8$ for the KDE.
By looking at e19, we see that the MISE with MC is much smaller for the CDE than for the GLRDE 
and KDE, for example for $d=2$ by a factor of about 32 for CDE-1 and about 70 for CDE-avg.
For $d=20$, the gains are more modest.
RQMC methods provide huge improvements for small $d$ with the CDE.
We observe rates $\hat\nu$ larger than 2 for $d=2$ and 3.
These rates also hold for larger $d$ asymptotically, but they take longer to kick in, 
so we would need to have much larger values of $n$ to observe them.
By looking at the exponents e19,
we see that for $d=3$, for example, the MISE goes from $2^{-17}$
for the GLRDE and KDE to about $2^{-42}$ for CDE-avg with Sobol' points with LMS.  
This is a MISE reduction by a factor of about $2^{25} \approx 33$ millions!  
The large values of $\hat\nu$ imply of course that this factor is smaller for smaller $n$.
When $d$ is large, such as $d=20$, RQMC brings only a small gain.
The values of $\hat\nu$ are sometimes noisy. 
For the GLRDE with Lat+s and $d=5$, for example, the large $\hat\nu = 1.45$ comes from the fact that 
the IV for $n=2^{14}$ (not shown) is unusually large (an outlier).
Looking at e19 gives a more robust assessment of the performance.
The GLRDE performs better than the KDE under RQMC for small $d$, but is not competitive with the CDE.
Under MC, the GLRDE is slightly worse than the KDE.

\hflorian{With RQMC we observe gains of the GLR over the KDE for small $d$ as well, even though 
they are much more moderate than for the CDE and these improvements completely disappear for $d=20$.
Under MC, the  GLR and the KDE perform equally well, but for $d\geq3$ the MISE of the GLR becomes 
larger than that of the KDE. This happens because the  (asymptotic) MISE for the KDE with MC only 
depends on the kernel and the true density and is therefore independent of $d$ in this example.
For the GLR only the \emph{MISE-rate} does not depend on $d$ under MC.}

In our second experiment, we take $a_j^2 = 2^{1-j}$ for $j=1,\dots,d$.
Now, the choice of $k$ for the CDE makes a difference, and the best choice will obviously be $k=1$,
i.e., hide the term that has the largest variance.
Note that with MC, $\Var[X] = 2-2^{-d}$, and when we apply CMC by hiding $a_k Z_k$ from the sum,
we hide a term of variance $a_k^2 = 2^{1-k}$ and generate a partial sum $S_{-k}$ of variance $2 - 2^{1-k}-2^{-d}$.
Both terms have a normal distribution with mean 0.
The results of Example~\ref{ex:sum2normal} hold with these variances.
Table~\ref{tab:normals2} reports the numerical results for $d=11$ and $k = 1, 2, 5, 11$. 
\hpierre{The lattice parameters were found again with Lattice Builder with $\mathcal{P}_2$,
  with weights $\Gamma_\frakv = 0.6^{|\frakv|}$.}

The MC rates $\hat\nu$ agree again with the theory, but here the IV depends very much on the choice of $k$,
and this effect is more significant when $k$ is smaller.
For example, for Sobol' points, the IV with $k=1$ is about 300,000 times smaller than with $k=11$.
The reason is that with $k=11$, we hide only a variable having a very small variance, 
so the CDE for one sample is a high narrow peak, and the HK variation of $\tilde g'(x,\bu)$ is very large.
For $k=1$ or 2, we have the opposite and the integrand is much more RQMC-friendly.

\subsection{Buckling strength of a steel plate}
\label{sec:buckling}

This is a six-dimensional example, taken from \cite{tSCH16a}.
It models the buckling strength of a steel plate by
\begin{equation}
\label{eq:buckling-def}
  X = \left( \frac{2.1}{\Lambda} - \frac{0.9}{\Lambda^2}\right) 
	           \left( 1- \frac{0.75 Y_5}{\Lambda} \right)\left( 1-\frac{2 Y_6 Y_2}{Y_1} \right),
\end{equation}
where $\Lambda = ({Y_1}/{Y_2}) \sqrt{Y_3/Y_4}$, and $Y_1,\dots,Y_6$ 
are independent random variables whose distributions are given in Table~\ref{tab:buckling-parameters}. 
Each distribution is either normal or lognormal, and the table gives the mean 
and the coefficient of variation (cv), which is the standard deviation divided by the mean. 
We estimate the density of $X$ over $[a,b] = [0.5169,0.6511]$, which contains 
about 99\% of the density (leaving out 0.5\% on each side).
There is a nonzero probability of having $Y_4 \le 0$, in which case $X$ is undefined, but this
probability is extremely small and this has a negligible impact on the density estimator over $[a,b]$,
so we just ignore it (alternatively we could truncate the density of $Y_4$).
There are also negligible probabilities that the density estimates below are negative 
and we ignore this.
%
\hflorian{I don't think $X > 0$ is the crucial condition here, although it contains some critical sub-cases. 
Actually, what we need to derive the estimators is is $V_1,V_2,V_3>0$ and $\Lambda>0$ or $Y_1Y_2>0$, 
depending on which variable we hide. For the $V_j$ we can even go into more detail, but I don't think that 
it would make much sense.}%

\begin{table}[!htbp]
	\centering
	\caption{Distribution of each parameter for the buckling strength model.}
	\label{tab:buckling-parameters}
	\begin{tabular}{c|crl}
		parameter	&  distribution	& mean			& cv \\
		\hline
		$Y_1$   & normal	  &$23.808$	& 0.028 \\
		$Y_2$		& lognormal	&$0.525$	& 0.044 \\
		$Y_3$ 	& lognormal	&$44.2$		&0.1235\\
		$Y_4$		& normal	  & $28623$	&0.076\\
		$Y_5$	  & normal 	  &$0.35$			&0.05\\
		$Y_6$	  & normal	  &$5.25$			&0.07
	\end{tabular}
\end{table}

For this example, computing the density of $X$ conditional on $\cG_{-5}$ or $\cG_{-6}$ 
(i.e., when hiding $Y_5$ or $Y_6$) is relatively easy, so we will try and compare these two choices.
If we hide one of the variables that appear in $\Lambda$, the CDE would be harder to compute
(it would require to solve a polynomial equation of degree 4 for each sample), and we do not do it.
%
%
Let us define 
\[
   V_1 = \frac{2.1}{\Lambda} - \frac{0.9}{\Lambda^2},  \qquad
   V_2 = 1 - \frac{2 Y_6 Y_2}{Y_1}, \quad \mbox{and \ } V_3 = 1 - \frac{3 Y_5}{4\Lambda}.
\]
Then we have 
\begin{equation*}
  X \leq x   \quad\Leftrightarrow\quad  
	Y_5 \ge \left( 1-\frac{x}{V_1 V_2} \right) \frac{4\Lambda}{3}
\end{equation*}
and
\begin{equation*}
 f(x\mid \cG_{-5}) = f_{5}\left(\left(1-\frac{x}{V_1 V_2}\right) \frac{4\Lambda}{3} \right) 
                   \frac{4\Lambda}{3 V_1 V_2}
    = \phi \left( \frac{\left(1-x/(V_1 V_2)\right){4\Lambda}/{3} -0.35}{0.0175}  \right) \frac{4\Lambda}{0.0525\cdot V_1 V_2}.
\end{equation*}
Similarly, 
\begin{equation*}
 f(x\mid \cG_{-6}) = f_{6}\left(\left(1-\frac{x}{V_1 V_3}\right) \frac{Y_1}{2 Y_2} \right) 
                  \frac{Y_1}{2Y_2 V_1 V_3}
        = \phi \left ( \frac{\left(1-{x}/(V_1 V_3)\right) {Y_1}/(2 Y_2)  -5.25}{0.3675}  \right ) \frac{Y_1}{0.735\cdot Y_2 V_1 V_3}.
\end{equation*}

For GLRDE using $Y_6$, let 
$C = \left({2.1}/{\Lambda} - {0.9}/{\Lambda^2}\right) \left( 1- {0.75 Y_5}/{\Lambda} \right)$.
We have $X = h(\bY) = C (1 - 2 Y_6 Y_2/Y_1)$, $h_6(\bY) = 2 C Y_2 / Y_1$, $h_{66}(\bY) = 0$,
$\partial \log f_6(Y_6)/\partial Y_6 = -(Y_6-\mu_6)/\sigma_6^2$, and 
$\Psi_6 = Y_1(Y_6-\mu_6)/(2C Y_2 \sigma_6^2)$.

\begin{table}[!htbp]
  \caption{Values of $\hat\nu$ and e19 with a CDE for $\cG_{-5}$, $\cG_{-6}$, their combination,
	   GLRDE, and the KDE, for the buckling strength model.}
  \label{tab:buckling}
  \centering
	\small
  \begin{tabular}{|l|c c c c c|c c c c c|}
     \cline{2-11}
  \multicolumn{1}{l|}{} &\multicolumn{5}{c|}{$\hat\nu$}	& \multicolumn{5}{c|}{e19}\\
     \cline{2-11}
  \multicolumn{1}{l|}{}	& $\cG_{-5}$	& $\cG_{-6}$	& comb.	&  GLRDE & KDE 
	                      & $\cG_{-5}$	& $\cG_{-6}$	& comb.	&  GLRDE & KDE \\
	\hline
	MC				  & 1.00 	& 1.00		& 1.00	& 0.98  & 0.76   & 13.5		& 15.4		& 15.4 & 10.2  & 11.7  \\
	Lat+s				& 1.89 	& 1.56		& 1.56	& 1.29  & 0.81  & 20.0		& 24.9		& 24.9 & 16.6  & 13.7  \\ 
	Lat+s+b				& 1.46	& 1.65		& 1.60	& 1.19  & 0.85  & 17.5		& 25.1		& 25.1 & 15.9  & 12.7  \\
	Sob+LMS				& 1.40 	& 1.75		& 1.75	& 1.16  & 0.81  & 17.7		& 25.5		& 25.5 & 15.9  & 12.4  \\
	\hline
  \end{tabular}
 \end{table}

\begin{figure}[!htbp]
	\centering
	\begin{tikzpicture} 
	\begin{axis}[ 
	xlabel=$\log_2n$,
	ylabel=$\log_2\MISE$,
	grid,
	width = 7 cm,
	height = 5 cm,
	legend style={at={(0.65,0.75)}, anchor={south west}, font=\scriptsize},	
	] 
	\addplot table[x=logN,y=logIV] { 
      logN  logIV 
      14.0  -8.5888646734209 
      15.0  -9.432264252285806 
      16.0  -10.559705394035218 
      17.0  -11.537675430110456 
      18.0  -12.525597566336398 
      19.0  -13.528333822089996 
 }; 
      \addlegendentry{MC}
%
      \addplot table[x=logN,y=logIV] { 
      logN  logIV 
      14.0  -10.249822669534305 
      15.0  -13.477838494885265 
      16.0  -15.825114316390513 
       17.0  -16.02432354302361 
      18.0  -19.157757500500075 
      19.0  -20.04066944408722 
 }; 
      \addlegendentry{Lat+s}
%
      \addplot table[x=logN,y=logIV] { 
      logN  logIV 
      14.0  -10.499627674658207 
      15.0  -11.30669474934358 
      16.0  -13.2633122488705 
      17.0  -14.445726976357793 
      18.0  -16.203261050837373 
      19.0  -17.51378751849192 
 }; 
      \addlegendentry{Lat+s+b}
%
      \addplot table[x=logN,y=logIV] { 
      logN  logIV 
      14.0  -10.973169341542103 
      15.0  -11.94546933172819 
      16.0  -13.047650507386173 
      17.0  -15.013953026342824 
      18.0  -16.35865350730346 
      19.0  -17.743172055556 
 }; 
      \addlegendentry{Sob+LMS}
%
	%
	\end{axis}
	\end{tikzpicture}
%
	\begin{tikzpicture} 
	\begin{axis}[ 
	xlabel=$\log_2n$,
	grid,
	width = 7 cm,
	height = 5cm,
	legend style={at={(0.65,0.75)}, anchor={south west}, font=\scriptsize},
	] 
	\addplot table[x=logN,y=logIV] { 
      logN  logIV 
      14.0  -10.468827630852278 
      15.0  -11.455637612646624 
      16.0  -12.474042012883793 
      17.0  -13.62548985151096 
      18.0  -14.434557132285814 
      19.0  -15.42972671866297 
 }; 
      \addlegendentry{MC}
%
%
      \addplot table[x=logN,y=logIV] { 
      logN  logIV 
      14.0  -16.37703161027969 
      15.0  -18.868313141208944 
      16.0  -20.15199009155408 
      17.0  -21.418950129881626 
      18.0  -22.452965207777545 
      19.0  -24.918563623267413 
 }; 
      \addlegendentry{Lattice+Shift}
      \label{Lattice+Shift}
%
      \addplot table[x=logN,y=logIV] { 
      logN  logIV 
      14.0  -17.36887500221374 
      15.0  -18.505686288096655 
      16.0  -20.376515065554663 
      17.0  -22.565072140303222 
      18.0  -24.121191775232976 
      19.0  -25.082918114354452 
 }; 
      \addlegendentry{Lattice+Baker}
%
      \addplot table[x=logN,y=logIV] { 
      logN  logIV 
      14.0  -17.058388066453347 
      15.0  -18.350745091350962 
      16.0  -20.201216352192745 
      17.0  -22.176208407458116 
      18.0  -24.070175509342413 
      19.0  -25.46993555515593 
 }; 
      \addlegendentry{Sob+LMS}
%
	%
	\end{axis}
	\end{tikzpicture}
	%
	\caption{MISE vs $n$ in log-log scale for the $\cG = \cG_{-5}$ (left) and $\cG = \cG_{-6}$ (right) 
	         for the buckling strength model.}
	\label{fig:buckling-mise}
\end{figure}
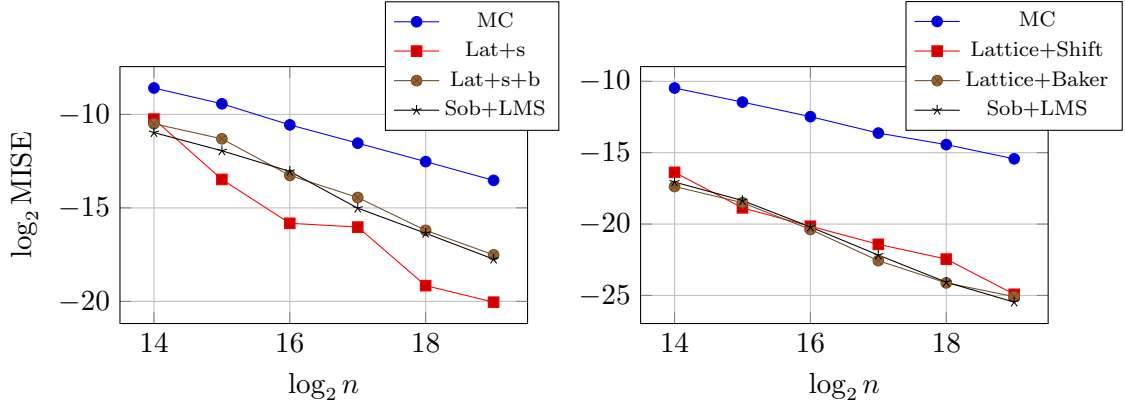

Table~\ref{tab:buckling} summarizes the results.  
\hflorian{The lattices were obtained with Lattice Builder using the $\mathcal{P}_2$ criterion
 with weights $\gamma_\frakv = 0.8^{|\frakv|}$.}%
We see again that with a very simple conditioning, the CDE with RQMC performs extremely well 
and much better than the GLRDE and the KDE.
It is also much better to condition on $\cG_{-6}$ than on $\cG_{-5}$,
and combining the two provides no significant improvement. 
The GLRDE is better than the KDE under RQMC, but not under MC.
%
\hflorian{Compared to the other methods, the IV of the CDE is again
very small, especially under RQMC. Depending on the RQMC point set, the GLRDE a improves upon
the KDE in terms of MISE by a factor ranging between 8 to 11,
but again performs a little bit worse than the GLRDE with MC.}%
%
Figure~\ref{fig:buckling-mise} displays the IV as a function of $n$ in a log-log-scale  
for the CDE with $\cG_{-5}$ and $\cG_{-6}$.
It unveils a slightly more erratic behavior of the MISE for the shifted lattice rule (Lat+s) 
than for the other methods; the performance depends on the choice of parameters of the lattice rule
and their interaction with the particular integrand.
\hflorian{Do you think that we still need this sentence, or is it ``acceptable'' now. FYI: 
 I have tried fast-CBC with $\mathcal{P}_2$ for 12 different configurations of weights and among those, 
 these (order-dependent, $\Gamma_k=0.8^{k}$) are the least erratic.}
%
\hflorian{KDE+Lat+s is very erratic for small $n$. So, the rate is flawed, but the 
e19 is ok (and good).}
\hpierre{The lattice for $\log_2 n = 17$ does not seem to perform well. 
  Perhaps a better one can be found, with different weights?}
\hflorian{We considered this problem already in the previous version. 
The ones we had were found with order dependent weights $\Gamma_k=0.8^{k}$. 
Taking $0.6^{k}$ makes $\cG_{-6}$ ``nicer'', but for $\cG_{-5}$ it is much harder. 
Larger and smaller weights made everything much worse. }

\subsection{Density of the failure time of a system}
\label{sec:network-failure}

We consider a $d$-component system in which each component starts in the operating
mode (state 1) and fails (jumps to state 0) at a certain random time, to stay there forever.
Let $Y_j$ be the failure time of component $j$ for $j=1,\dots,d$.
For $t\ge 0$, let $W_j(t) = \II[Y_j > t]$ be the state of component $j$ and 
$\bW(t) = (W_1(t),\dots,W_{d}(t))^\tr$ the system state, at time $t$.
The system is in the failed mode at time $t$ if and only if $\Phi(\bW(t)) = 0$,
where $\Phi : \{0,1\}^d \to \{0,1\}$ is called the \emph{structure function}. 
Let $X = \inf\{t\ge 0 : \Phi(\bW(t)) = 0\}$ be the random time when the system fails.
We want to estimate the density of $X$.
A straightforward way of simulating a realization of $X$ is to generate the component
lifetimes $Y_j = \inf\{t\ge 0 : W_j(t) = 0\}$ for $j=1,\dots,d$, and then compute $X$ from that. 

As in Section~\ref{sec:san}, the GLRDE method 
of Section~\ref{sec:GLRDE-estimator} does not work for this example,
because $h_j(\bY) \not= 0$ only when $X = Y_j$,
and there is no $j$ for which this is certain to happen. 

If the $Y_j$ are independent and exponential, one can construct 
a CMC estimator of the cdf $F(x) = \PP[X\le x]$ as follows \citep{pGER10a,vBOT13a}.
Generate all the $Y_j$'s and sort them in increasing order.
Then, erase their values and retain only their order, which is a permutation $\pi$ of
$\{1,\dots,d\}$.  
Compute the critical number $C = C(\pi)$, defined as the number of component failures required
for the system to fail (that is, the system fails at the $C$th component failure, for the given $\pi$).
Note that $C$ can also be computed by starting with all components failed and resurrecting them one by one 
in reverse order of their failure, until the system becomes operational.
Computing $C$ using this reverse order is often more efficient \citep{vBOT16m}.
Then compute the conditional cdf $\PP[X\le x \mid \pi]$, where $X$ is the time of the 
$C$th component failure.  This is an unbiased estimator of $F(x)$ with smaller variance 
than the indicator $\II[X\le x]$.
It can also be shown that in an asymptotic regime in which the component failure rates converge
to 0 so that $1-F(x) \to 0$, the relative variance of this CMC estimator of $1-F(x)$ remains 
bounded whereas it goes to infinity with the conventional estimator $\II[X > x]$;
i.e., the CMC estimator has bounded relative error \citep{vBOT13a,vBOT16m}.
This $X$ is a sum of $C$ independent exponentials, so it has a hypoexponential
distribution, whose cdf has an explicit formula that can be written in terms of a matrix exponential,
and developed explicitly as a sum of products in terms of the rates of the exponential lifetimes, 
as explained below.   
\hpierre{Note that the rates $\Lambda_j$ in the formula given in 
  \cite{vBOT13a} are for a repair process instead of a failure process.}
By taking the derivative of the conditional cdf formula with respect to $x$, 
one obtains the conditional density.

More specifically, let component $j$ have an exponential lifetime with rate
$\lambda_j > 0$, for $j=1,\dots,d$.
For a given realization, let $\pi(j)$ be the $j$th component that fails and let
$C(\pi)=c$ for the given $\pi$, let $A_1$ be the time until the first failure, 
and let $A_j$ be the time between the $(j-1)$th and $j$th failures, for $j > 1$.
Conditional on $\pi$, we have $X = A_1 + \cdots + A_c$ where the $A_j$'s are independent
and $A_j$ is exponential with rate $\Lambda_j$ for all $j\ge 1$, 
with $\Lambda_1 = \lambda_1 + \cdots + \lambda_d$, 
and $\Lambda_j = \Lambda_{j-1} - \lambda_{\pi(j-1)}$ for all $j\ge 2$. 
The conditional distribution of $X$ is then hypoexponential with cdf
\[
  \PP[X\le x \mid \pi] = \PP[A_1 + \cdots + A_c \le x \mid \pi] 
	  = 1 - \sum_{j=1}^c p_j e^{-\Lambda_j x},
\]
where
\[
  p_j = \prod_{k=1, k\not=j}^c  \frac{\Lambda_k}{\Lambda_k-\Lambda_j}. 
\]
See \cite{pGER10a}, Appendix A, and \cite{vBOT16m}, for example.
Taking the derivative with respect to $x$ gives the CDE
\[
  f(x \mid \pi) = \sum_{j=1}^c \Lambda_j p_j e^{-\Lambda_j x},
\]
in which $c$, the $\Lambda_j$ and the $p_j$ depend on $\pi$.
This conditional density is well defined and computable everywhere in $[0,\infty)$.
There are instability issues for computing $p_j$ when $\Lambda_k-\Lambda_j$ is close to 0 for some $k\not=j$,
but this can be addressed by a stable numerical algorithm of \cite{mHIG09a}.

All of this can be generalized easily to a model in which the lifetimes are dependent,
with the dependence modeled by a Marshall-Olkin copula \citep{vBOT16m}.
In that model, the $Y_j$ represent the occurrence times of shocks that can take down 
one or more components simultaneously.  

It is interesting to note that although $f(x \mid \pi)$ is an unbiased estimator of 
the density $f(x)$ at any $x$, this estimator is a function of the permutation $\pi$ only,
so it takes its values in a finite set, which means that the corresponding $\tilde g(\bu)$
is a piecewise constant function, which is not RQMC-friendly.
Therefore, we do not expect RQMC to bring a very large gain.

\begin{table}[!htbp]	
	\centering
	\caption{Values of $\hat\nu$ and e19 with the CDE, for the network reliability example. }
	\label{tab:network-failure}
	\small
	\begin{tabular}{l| c c }
	  \hline
		 &  $\hat\nu$	& e19 \\
		\hline
		 MC 		  & 1.00		& 19.9\\
		 Lat+s   	& 1.22		& 23.9\\
		 Lat+s+b	& 1.19		& 23.8\\
		 Sob+LMS 	& 1.33 	  & 23.9\\
  \hline
	\end{tabular}
\end{table}

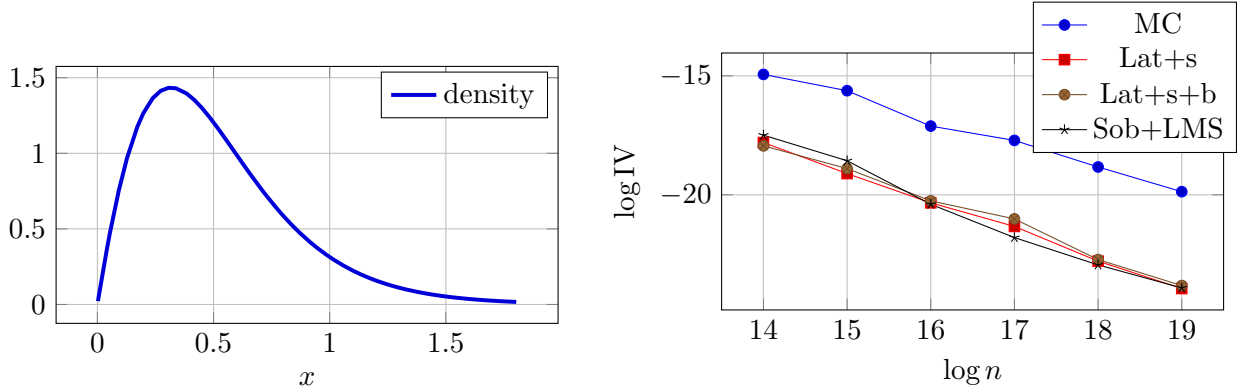
\begin{figure}[hbt]
\centering
  \begin{tikzpicture} 
    \begin{axis}[ 
      title ={},
      xlabel=$x$,
	  width = 0.5\textwidth,
	  height = 5cm,
	   cycle list name=dens,
       grid,
      ] 
      \addplot table[x=x,y=y] { 
      x  y 
      0.0023230334222318957  0.021889337060761006 
      0.04240586917110737  0.3788791843102802 
      0.05325825774998814  0.4678698446463949 
      0.092223420112755  0.7543252156860056 
      0.12761732735707054  0.9686944016652647 
      0.17236355722284408  1.1769177525197994 
      0.19718057990260437  1.2634124285026238 
      0.238283769997187  1.3647586559991387 
      0.2729081653349517  1.413567699022711 
      0.30624185465640985  1.4330994809270416 
      0.3378978739368398  1.4302628844122898 
      0.3844790527767323  1.39513224841477 
      0.41683337357004674  1.3535879769160675 
      0.44945093958428023  1.3009721556756177 
      0.4835221417003091  1.2374613892524162 
      0.5175539218100245  1.1679972851261449 
      0.5596869198270095  1.077122837091747 
      0.600476336728632  0.9871922434424676 
      0.6252757744934879  0.9327163496509246 
      0.656127266958638  0.866009757705516 
      0.7082630377790918  0.7577656990213362 
      0.7291913483433148  0.7163604650037965 
      0.7755979689507088  0.6294592933522631 
      0.8009876366524372  0.5849817907132182 
      0.8347511520535313  0.5293349480338 
      0.8727511204611541  0.47152969008681583 
      0.9165096954356267  0.4112112991101213 
      0.9434200261054388  0.3773382426343356 
      0.9848871140643395  0.32972378358031534 
      1.01478590142484  0.2986661789869366 
      1.057721675408382  0.258537214440538 
      1.0959180753542621  0.22693789625483443 
      1.1335998914517689  0.19921755251308623 
      1.168989383087217  0.1760320104933344 
      1.203917569062904  0.1556095724861285 
      1.2373942653035108  0.13812152111325363 
      1.2689989461812845  0.12331416324086406 
      1.2970120295687444  0.1114508733297215 
      1.3319009112471738  0.09818225473496064 
      1.383316884759155  0.08133528260204631 
      1.41966453129937  0.07113371348219835 
      1.4463528031153752  0.06443801693940115 
      1.4838402509983752  0.0560498877158252 
      1.5209337075962244  0.048793854031599186 
      1.544555813545256  0.04465680772517145 
      1.5968219577052973  0.0366784099703041 
      1.6225979203232024  0.03327320591362017 
      1.6609981364836943  0.028765501234052155 
      1.6996078280381317  0.024837053801642348 
      1.7341578686220565  0.021770716454475716 
      1.760297356975756  0.01970071117948705 
      1.8030023649223177  0.016727440314351857 
 }; 
   \addlegendentry{density}
%
    \end{axis}
  \end{tikzpicture}
\hspace{10pt}
%
 \begin{tikzpicture} 
    \begin{axis}[ 
      xlabel=$\log n$,
      ylabel=$\log{\rm IV}$,
      width = 0.5\textwidth,
   	  height = 5cm,
       grid,
       legend style={at={(0.62,1.20)},anchor=north west}
      ] 
      \addplot table[x=logN,y=logIV] { 
      logN  logIV 
      14.0  -14.94022770458407 
      15.0  -15.624091929527514 
      16.0  -17.112147980123485 
      17.0  -17.71402397194226 
      18.0  -18.83015490025875 
      19.0  -19.868773167181406 
 }; 
      \addlegendentry{MC}
%
      \addplot table[x=logN,y=logIV] { 
      logN  logIV 
      14.0  -17.80305240801714 
      15.0  -19.10783578137406 
      16.0  -20.33668371897971 
      17.0  -21.328756658781206 
      18.0  -22.78785853288337 
      19.0  -23.938698116468675 
 }; 
      \addlegendentry{Lat+s}
%
      \addplot table[x=logN,y=logIV] { 
      logN  logIV 
      14.0  -17.943213804830975 
      15.0  -18.895658788975958 
      16.0  -20.253947735808012 
      17.0  -21.011499467164786 
      18.0  -22.721973001101595 
      19.0  -23.813651283408436 
 }; 
      \addlegendentry{Lat+s+b}
%
      \addplot table[x=logN,y=logIV] { 
      logN  logIV 
%
      14.0  -17.492185482753406 
      15.0  -18.56843534396864 
      16.0  -20.398621971141207 
      17.0  -21.7973039986028 
      18.0  -22.94629384089592 
      19.0  -23.912730848298285
 }; 
      \addlegendentry{Sob+LMS}
%
    \end{axis}
  \end{tikzpicture}
%
 \caption{Density (left) and $\log\IV$ as a function of $\log n$ (right) 
	  for the network failure time.}
  \label{fig:network-failure}
\end{figure}

\hpierre{-- As another illustration, we could take the dodecahedron example from \citep{vBOT13a}.}
%
For a numerical illustration, we take the same graph as in Section~\ref{sec:san}.
For $j=1,\dots,13$, $Y_j$ is exponential with rate $\lambda_j$ and the $Y_j$ are independent.  
The system fails as soon as there is no path going from the source to the sink.  
For simplicity, here we take $\lambda_j = 1$ for all $j$, although taking different $\lambda_j$'s
brings no significant additional difficulty.
We estimate the density over the interval $(a,b]= (0, 1.829]$, which cuts off roughly 
1\% of the probability on the right side. 
Table~\ref{tab:network-failure} and Figure~\ref{fig:network-failure} give the results.
The density of $X$ estimated with $n=2^{20}$ random samples is shown on the left 
and the IV plots are on the right.
Despite the discontinuity of $\tilde g$, RQMC outperforms MC in terms of the IV 
by a factor of about $2^{4} = 16$ for $n=2^{19}$, and also by improving the empirical rate $\hat\nu$
to about $-1.2$ for lattices and even better with Sobol' points.
The Sobol' points used here were constructed using LatNet Builder \citep{rMAR20a}
with a CBC search based on the $t$-value of all projections up to order 6, 
with order-dependent weights $\gamma_{k} = 0.8^{k}$ for projections of order $k$.

\subsection{Estimating a quantile with a confidence interval}
\label{sec:quantile}


For $0 < q < 1$, the $q$-quantile of the distribution of $X$ is defined as 
$\xi_q = F^{-1}(q) = \inf\{x : F(x)\ge q\}$.
Given $n$ i.i.d.\ observations of $X$, a standard (consistent) estimator of $\xi_q$ is 
the $q$-quantile of the empirical distribution, defined 
as $\hat\xi_{q,n} = X_{(\lceil nq\rceil)}$, where $X_{(1)},\dots,X_{(n)}$ are the 
$n$ observations sorted in increasing order (the order statistics).
We assume that the density $f(x)$ is positive and continuously differentiable 
in a neighborhood of $\xi_q$.  Then we have the central limit theorem (CLT): 
\[
  \sqrt{n} (\hat\xi_{q,n} - \xi_q) / \sigma_\xi  \Rightarrow \cN(0,1) \qquad \mbox{ for } n\to\infty, 
\] 
where $\sigma^2_\xi = q(1-q)/f^2(\xi_q)$ \citep{tSER80a}.
This provides a way to compute a confidence interval on $\xi_q$, but requires the estimation
of $f(\xi_q)$, which is generally difficult.  Some approaches for doing this include 
finite differences with the empirical cdf, batching, and sectioning 
\citep{sASM07a,tNAK14a,tNAK14b}.

In our setting, one can do better by taking the $q$-quantile $\hat\xi_{\cmc,q,n}$
of the conditional cdf
\[
  \hat F_{\cmc,n}(x)  = \frac{1}{n} \sum_{i=1}^n F(x\mid \cG^{(i)}).
\]
That is, $\hat\xi_{\cmc,q,n} = \inf\{x : \hat F_{\cmc,n}(x) \ge q\}$.
This idea was already suggested by \cite{tNAK14b}, who pointed out that this estimator obeys
a CLT just like $\hat\xi_{q,n}$, but with the variance constant $\sigma^2_\xi$
replaced by $\sigma^2_{\cmc,\xi} = \Var[F(\xi_q \mid \cG)]/f^2(\xi_q) \le \sigma^2_\xi$.
This is an improvement on the quantile estimator itself.
Our CDE approach also provides an improved estimator of the density $f(\xi_q)$ which 
appears in the variance expression.
We estimate $f(\xi_q)$ by $\hat f_{\cde,n} (\hat\xi_{\cmc,q,n})$.  
This provides a more accurate confidence interval of $\xi_q$.

Further improvements on the variances of both the quantile and density 
estimators can be obtained by using RQMC to generate the realizations $\cG^{(i)}$.
In particular, if $\tilde g(\xi_q, \bu) = F(\xi_q \mid \cG)$ is a sufficiently smooth function 
of $\bu$, $\Var[\hat\xi_{\cmc,q,n}]$ can converge at a faster rate than $\cO(n^{-1})$.
When using RQMC with $n_r$ randomizations to estimate a quantile, the quantile 
estimator will be the empirical quantile of all the $n_r\times n$ observations.
%
\hpierre{How do we estimate the variance of this quantile estimator?  Perhaps not obvious.}
%
\hpierre{We could have a short numerical example to illustrate this, based on an
 example which is already in the paper, e.g., the cantilever example. 
 We can estimate the 0.95 quantile and the density at that quantile.}

A related quantity is the \emph{expected shortfall}, defined as 
$c_q = \EE[X\mid X > \xi_q] = \xi_q - \EE[(\xi_q - X)^+] / q$ 
which is often estimated by its empirical version \citep{sHON14a}
\[
   \hat c_{q,n} = \hat\xi_{q,n} - \frac{1}{nq} \sum_{i=1}^n (\hat\xi_{q,n} - X_i)^+.
\]
This estimator obeys the CLT
$
  \sqrt{n} (\hat c_{q,n} - c_q) / \sigma_c  \Rightarrow \cN(0,1)
$ 
for $n\to\infty$, where $\sigma^2_c = \Var[(\xi_q - X)^+]/q^2$, if this variance is finite \citep{sHON14a}.
By improving the quantile estimator, CDE+RQMC can also improve the expected shortfall estimator
a well as the estimator of the variance constant $\sigma^2_c$ and the quality of 
confidence intervals on $c_q$. We leave this as a topic for future work.


\hpierre{We should have a very short discussion here of related papers on VaR and 
quantile sensitivity estimation:   \cite{sHE19a,sHON09a,sHON14a,vNAK14a}.   
Also some papers on estimating the sensitivity of conditional value at risk (CVaR) 
with respect to a parameter $\theta$.  See for example \cite{sHON09b,sHON14a,sHON14b}.
I think it is important to cite some of the papers of Jeff Hong and make appropriate links,
with our future work.}

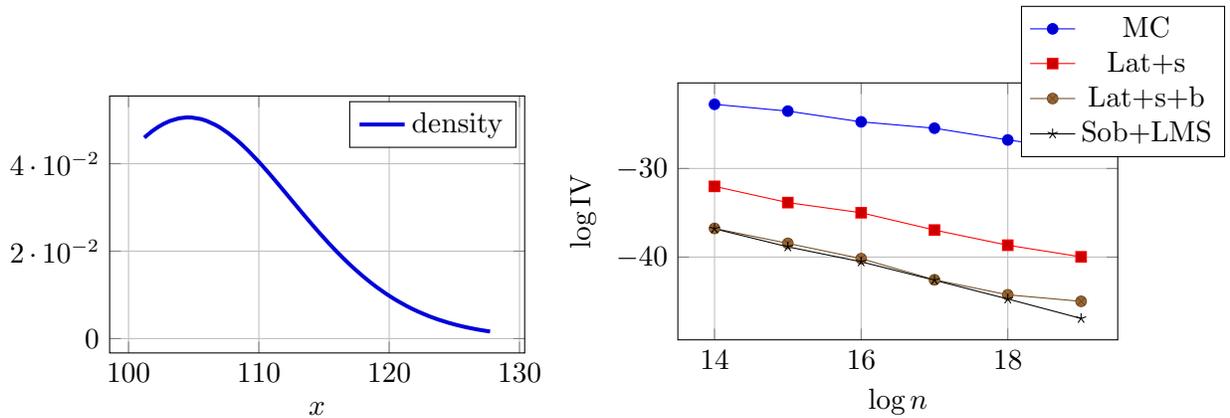
\begin{figure}[thb]
\centering
\hbox{\begin{tikzpicture} 
    \begin{axis}[ 
      title ={},
      xlabel=$x$,
	  width = 0.43\textwidth,
	  height = 5cm,
	   cycle list name=dens,
       grid,
       scaled y ticks=false,
      ] 
      \addplot table[x=x,y=y] { 
      x  y 
      101.20607455922807  0.0459616833815542 
      101.8080156585258  0.04743813376722539 
      102.27107734476282  0.04839304797939822 
      102.70356584256345  0.049135258848334966 
      103.19737523484157  0.04979975445211217 
      103.87764132070127  0.05038775068720478 
      104.46292460121201  0.05058689095119182 
      104.84915361954383  0.0505637034558061 
      105.54968387235802  0.05021453640707879 
      105.95131237626133  0.04984128833934768 
      106.67843690128517  0.04886187392144015 
      107.02478176327975  0.04826557791431572 
      107.63769655273917  0.04702137737726421 
      108.30566934201036  0.045416146147137515 
      108.70846950143121  0.044336900804515966 
      109.21093508335979  0.042888210528317774 
      109.94456947833311  0.04060077992154888 
      110.26303426946761  0.039555454599464863 
      110.80786186110156  0.037709723966024004 
      111.36013692030873  0.03578279404619359 
      112.10819637057736  0.033118979852920886 
      112.5491866648476  0.0315369320577495 
      113.16475819011661  0.029334598306763698 
      113.61506663159136  0.027739490907371076 
      114.13378922943923  0.025930237224972594 
      114.65811718545409  0.024142528275577214 
      115.33836207482031  0.021900356239763268 
      115.82286024135003  0.02036534843587059 
      116.38719408729236  0.01865007771268235 
      116.76404081041699  0.017551217928570093 
      117.3391989416979  0.015949710319387897 
      117.97868100208282  0.014280179858777985 
      118.60628158866469  0.01275795911587771 
      119.01080592535217  0.011838221012779263 
      119.71324621062216  0.010354687925984556 
      120.22743193314574  0.00935853497680795 
      120.59226581667417  0.008696621635874216 
      121.32096696310151  0.007482363778496054 
      121.8175726348924  0.006733885859636989 
      122.2230781579411  0.006167972993910236 
      122.95160469466626  0.005247965184603101 
      123.25258472362239  0.004902249092094642 
      123.79231965933648  0.004329508197655877 
      124.56378394844741  0.003608912383493579 
      125.1379567172101  0.003141083243164034 
      125.66038939669657  0.002761582858579987 
      126.08275973671249  0.0024844003532584107 
      126.72179436659314  0.0021110890454771217 
      127.05929866911912  0.001934535374230523 
      127.74387081849605  0.0016159463169558852 
 }; 
   \addlegendentry{density}
%
\end{axis}
\end{tikzpicture}
\hfill
\begin{tikzpicture} 
    \begin{axis}[ 
      xlabel=$\log n$,
      ylabel=$\log{\rm IV}$,
      width = 0.45\textwidth,
	  height = 5cm,
       grid,
       legend style={at={(0.78,1.30)},anchor=north west}
      ] 
      \addplot table[x=logN,y=logIV] { 
      logN  logIV 
14.0	 -22.761337
15.0	 -23.525771
16.0	 -24.737236
17.0	 -25.456558
18.0	 -26.780851
 19.0	 -27.917220
 }; 
      \addlegendentry{MC}
%
      \addplot table[x=logN,y=logIV] { 
      logN  logIV 
14.0	 -32.017227
15.0	 -33.850174
16.0	 -34.995498
17.0	 -36.945304
18.0	 -38.658706
19.0	 -39.970254

 }; 
      \addlegendentry{Lat+s}
%
      \addplot table[x=logN,y=logIV] { 
      logN  logIV 
14.0     -36.765463
15.0     -38.449402
16.0     -40.173345
17.0     -42.558317
18.0     -44.254211
19.0     -44.991242
 }; 
      \addlegendentry{Lat+s+b}
%
      \addplot table[x=logN,y=logIV] { 
      logN  logIV  
14.0	 -36.803557
15.0	 -38.839757
16.0	 -40.530596
17.0	 -42.607936
18.0	 -44.725274
19.0	 -46.921732
 }; 
      \addlegendentry{Sob+LMS}
%
%
    \end{axis}
  \end{tikzpicture}
}
%
  \caption{Estimated density (left) and $\log \IV$ as a function of $\log n$ (left) 
	  for the Asian option.}
  \label{fig:asianBridge}
\end{figure}

\begin{figure}[htbp]
%
  \begin{tikzpicture} 
    \begin{axis}[ 
      title ={},
		  width=.45\columnwidth,
		  height=.30\columnwidth,
      xlabel=$x$,
      cycle list name=dens,
       grid,
      ] 
      \addplot table[x=x,y=y] { 
      x  y 
      0.002794244685024697  0.41282111409741357 
      0.05100760643872949  0.43273563102311124 
      0.09480209234283196  0.4499388862925675 
      0.1501686309843965  0.46442981722169113 
      0.18087585814720453  0.46751174004078616 
      0.23173469853420214  0.46509792119086385 
      0.2745770324729436  0.4575192082719382 
      0.3158223173476388  0.44687290961814663 
      0.3549917450287107  0.4347479350273885 
      0.41262874921755405  0.4148923269254528 
      0.4526622170158059  0.4005793314919085 
      0.49302141005928946  0.3860256487355689 
      0.5351792529475141  0.3711095680692716 
      0.5772883172002552  0.3564109132046836 
      0.6294213856127225  0.3390334389702375 
      0.6798919802041565  0.32304954519088047 
      0.7105774487545969  0.31377636159236 
      0.7487513989364474  0.30239523594559947 
      0.8132613515826783  0.2844652979188286 
      0.8391568989624817  0.27758144980765465 
      0.89657791433721  0.26321279288179117 
      0.927993701144748  0.25567810473772595 
      0.969770829312581  0.2460244262962898 
      1.016789912831574  0.2356645048824472 
      1.070934381303925  0.22456853329996515 
      1.1042317533363135  0.2179480673514368 
      1.1555408610469893  0.20827369763523002 
      1.1925359860149876  0.2015562900856097 
      1.2456623655221826  0.1923843554353138 
      1.2929245026880636  0.18466436198560907 
      1.339549921931323  0.1774130528931131 
      1.3833389442024058  0.17088008409328664 
      1.4265571720688355  0.16472537702868378 
      1.4679794058873588  0.15905284195422323 
      1.5070853102355601  0.15392596567063785 
      1.541747168195297  0.14955321027419316 
      1.5849167630729935  0.14430279767023022 
      1.6485360778797198  0.136940727625966 
      1.6935106686493893  0.13205168883677923 
      1.7265332771956272  0.12853110011655644 
      1.7729181958360098  0.12385436344078886 
      1.8188156111989284  0.11946268370870343 
      1.8480443137230393  0.11673487404654535 
      1.912715583212404  0.11092988576537006 
      1.9446093502311843  0.10821047703416542 
      1.992123678438767  0.10430887574466767 
      2.0398972000471822  0.10057069129559998 
      2.08264753147836  0.09736248877472524 
      2.1149911053693273  0.09501210513696762 
      2.1678319475979184  0.09134568564384005 
 }; 
       \addlegendentry{density}
  \end{axis}
  \end{tikzpicture}
\hfill
   \begin{tikzpicture} 
    \begin{axis}[ 
      width = 0.5\textwidth,
	    height = 5cm,
      xlabel=$\log n$,
      ylabel=$\log{\rm IV}$,
       grid,
       legend style={at={(1.25,1.1)},anchor=north east},   
       cycle list name=comparison,
      ] 
      \addplot table[x=logN,y=logIV] { 
      logN  logIV 
      14.0  -19.853407349363017 
      15.0  -20.837730535905706 
      16.0  -21.835504415895194 
      17.0  -22.843850664973367 
      18.0  -23.842744090719222 
      19.0  -24.842239203877426 
 }; 
      \addlegendentry{MC-CDE}
%
%
      \addplot table[x=logN,y=logIV] { 
      logN  logIV 
      14.0  -10.855182045494894 
      15.0  -11.838606936573392 
      16.0  -12.83926039468427 
      17.0  -13.83398707870161 
      18.0  -14.843645560130184 
      19.0  -15.839419424762527 
 }; 

      \addlegendentry{MC-GLRDE}
%
      \addplot table[x=logN,y=logIV] { 
      logN  logIV 
      14.0  -27.29067223962574 
      15.0  -28.07666259760639 
      16.0  -29.294888069803527 
      17.0  -30.021630892383126 
      18.0  -31.12253797408318 
      19.0  -32.25043212022527 
 }; 
      \addlegendentry{Lat+s-CDE}
%
      \addplot table[x=logN,y=logIV] {
      logN  logIV 
       14.0	 -19.654037
       15.0	 -20.350068
       16.0	 -21.478259
       17.0	 -22.856442
       18.0	 -23.549005
       19.0  -24.636864 
 }; 
      \addlegendentry{Lat+s-GLRDE}
%
    \end{axis}
  \end{tikzpicture}
%
  \caption{Estimated density (left) and $\log\IV$ as a function of $\log n$ (right) 
    for the single queue over a finite-horizon.}
  \label{fig:queue-finite-horizon}
\end{figure}
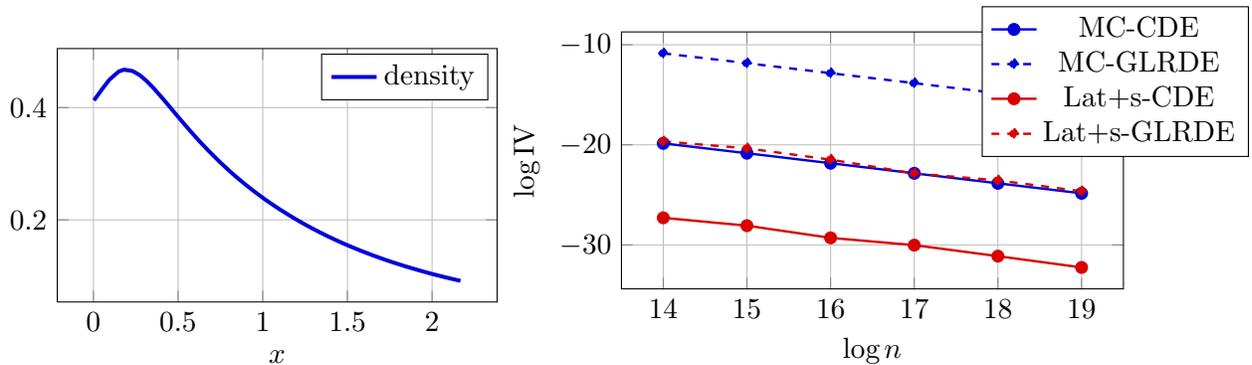

\begin{figure}[htb]
%
  \begin{tikzpicture} 
    \begin{axis}[ 
      title ={},
		width=.45\columnwidth,
		height=.35\columnwidth,
      xlabel=$x$,
      cycle list name=dens,
       grid,
      ] 
      \addplot table[x=x,y=y] { 
      x  y 
      0.005906017175165837  0.39742752587153185 
      0.10781153179095096  0.43589591609120293 
      0.2003771497246221  0.44892750275994364 
      0.3174018791261108  0.42720191770134747 
      0.3823057910838641  0.4066324644046126 
      0.4898028855382  0.3703009946763053 
      0.5803560004541763  0.33985152408428554 
      0.6675335343938729  0.31233946371649896 
      0.7503234610834112  0.28782182578232635 
      0.872147129028012  0.2557573228592747 
      0.9567633223288625  0.23534128033385596 
      1.042067980352589  0.21661513772725566 
      1.1311743300936095  0.19963333655108015 
      1.220177579536903  0.1836563945861569 
      1.3303679286814363  0.16533184260031605 
      1.43704441270424  0.15001706772845466 
      1.5019023348676708  0.14143204714151175 
      1.5825881841156728  0.13088798567881774 
      1.7189387658452064  0.11504780412967734 
      1.7736725364434272  0.1088141861309927 
      1.8950396825763756  0.09725014657781669 
      1.9614412319650356  0.09211788155390024 
      2.049742889228864  0.08526292123636937 
      2.1491241339394636  0.07823271660814425 
      2.263565851392387  0.07137039105683644 
      2.3339443877335717  0.06755916979791599 
      2.442393183576591  0.06184584019973621 
      2.520587424986224  0.05748247173296238 
      2.6328772725809766  0.051925064249434244 
      2.7327722443179527  0.04785302190670782 
      2.8313214259002955  0.04402821664601987 
      2.923875495700539  0.04054204361942753 
      3.015223113690948  0.03810055665773806 
      3.1027746533528267  0.03532923654129244 
      3.1854303148160703  0.03314924537992229 
      3.2586928782309683  0.030895804771667913 
      3.3499377037679183  0.02926389232797021 
      3.484405800973044  0.0265271200409409 
      3.579465731463482  0.02448879174596624 
      3.6492635177089396  0.023049062342575997 
      3.7473043684715663  0.021253165221502675 
      3.844314814579553  0.019502573188140145 
      3.906093663096424  0.018393221307815982 
      4.042785209971672  0.016385551548199307 
      4.110197035715912  0.015529812949185463 
      4.210625047609212  0.01430736523687535 
      4.311600900099727  0.013176668284291716 
      4.401959555170171  0.012294833956339717 
      4.470322109076078  0.011505996756587677 
      4.5820084346956005  0.01078568327727437 
 }; 
       \addlegendentry{density}
  \end{axis}
  \end{tikzpicture}
\hfill		
		\begin{tikzpicture} 
    \begin{axis}[ 
		width=.45\columnwidth,
		height=.35\columnwidth,
      xlabel=$\log n$,
      ylabel=$\log{\rm IV}$,
      cycle list name=comparison,
	  legend style={at={(1.45,1.)},anchor=north east},     
       grid,
	] 
      \addplot table[x=logN,y=logIV] { 
      logN  logIV 
      14.0  -14.90092233824423 
      15.0  -15.922674757355777 
      16.0  -16.91721009636318 
      17.0  -17.877637604940823 
      18.0  -18.87594158390892 
      19.0  -19.882423945433235 
 };
      \addlegendentry{MC-CDE}
%
      \addplot table[x=logN,y=logIV] { 
      logN  logIV 
      14.0  -6.620403056650534 
      15.0  -7.563311157521403 
      16.0  -8.471188538507484 
      17.0  -9.459099061707633 
      18.0  -10.512135806985476 
      19.0  -11.478168194523814 
 }; 
      \addlegendentry{MC-GLRDE}
%
      \addplot table[x=logN,y=logIV] { 
      logN  logIV 
      14.0  -22.392693569981542 
      15.0  -23.382865497835372 
      16.0  -24.7233343106602 
      17.0  -25.55620435940835 
      18.0  -26.517461119406576 
      19.0  -27.647943633942436 
 }; 
      \addlegendentry{Lat+s-CDE}
%
      \addplot table[x=logN,y=logIV] { 
      logN  logIV 
      14.0  -14.136963821198073 
      15.0  -15.036836033546011 
      16.0  -16.562985946524233 
      17.0  -17.554005390299235 
      18.0  -18.722183207944685 
      19.0  -20.13907459572688 
 }; 
      \addlegendentry{Lat+s-GLRDE}
    \end{axis}
  \end{tikzpicture}
%
  \caption{Estimated density (left) and $\log\IV$ as a function of $\log n$ (right) 
    for the single queue in steady-state.}
  \label{fig:queue-steady-state}
\end{figure}
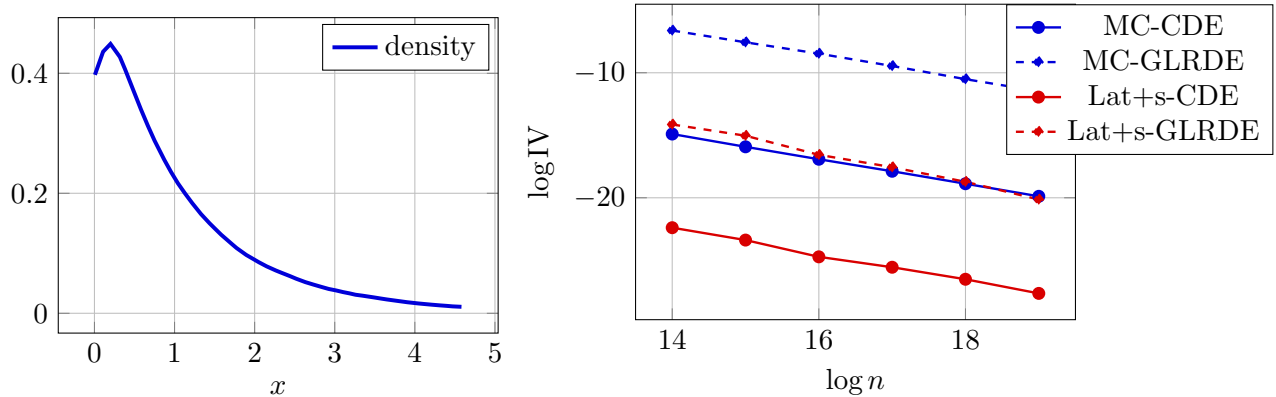


\bigskip
\section{Some additional figures}
\label{sec:more-figures}

Figure~\ref{fig:asianBridge} shows the estimated density in $[a,b]$ (left panel)
and the IV as a function of $n$ in log-log scale for the bridge CDE in 
Example~\ref{sec:function-multinormal}.

Figure~\ref{fig:queue-finite-horizon} shows the estimated density in $[a,b]$ (left panel)
and the IV as a function of $n$ in log-log scale
for the finite-horizon queueing system in Example~\ref{sec:single-queue}.
Figure~\ref{fig:queue-steady-state} does the same for the infinite-horizon case.

\bigskip

\end{APPENDICES}
